\documentclass{article}
\usepackage{amsmath,amssymb,amsthm,graphicx,epsfig,float,url}
\usepackage[colorlinks=true,citecolor={Plum},linkcolor={Periwinkle}]{hyperref}
\usepackage{pdfsync}
\usepackage[usenames, dvipsnames]{xcolor}
\usepackage{tikz}
\usepackage{subfig}
\usepackage{bbm}
\usepackage{stmaryrd}
\usepackage{mathrsfs}  
\usepackage [T1]{fontenc}

\topmargin -1cm
\textheight 21cm
\textwidth 15cm 
\oddsidemargin 1cm

\newcommand{\R}{\mathbb{R}}
\newcommand{\N}{\mathbb{N}}

\renewcommand{\geq}{\geqslant}
\renewcommand{\leq}{\leqslant}

\def\B{{\mathbb B}}
\def\e{{\varepsilon}}
\newcommand{\Hun}{\mathbf{(H_1)}}
\newcommand{\Hdeux}{\mathbf{(H_2)}}
\newcommand{\Htrois}{\mathbf{(H_3)}}
\allowdisplaybreaks

\usepackage[dvipsnames]{xcolor}

\newtheorem*{theorem*}{Theorem}

\newtheorem{theorem}{Theorem}  
\newtheorem{proposition}{Proposition}

\newtheorem{definition}{Definition}
\newtheorem{lemma}{Lemma}

\theoremstyle{definition}\newtheorem{remark}{Remark}

\def\O{{\Omega}}
\def\n{{\nabla}}
\def\ur{{u_{\rho}}}
\def\p{{\varphi}}
 \def\pe{{\operatorname{O}(\rho^2)}}

\title{Shape optimization of a Dirichlet type energy for semilinear elliptic partial differential equations}

\author{Antoine Henrot\footnote{Universit\'e de Lorraine, CNRS, Institut Elie Cartan de Lorraine, BP 70239 54506 Vand\oe uvre-l\`es-Nancy Cedex, France ({\tt antoine.henrot@univ-lorraine.fr}).}
	\and Idriss Mazari\footnote{Sorbonne Universit\'es, UPMC Univ Paris 06, UMR 7598, Laboratoire Jacques-Louis Lions, F-75005, Paris, France (\texttt{idriss.mazari@upmc.fr}).}
	\and Yannick Privat\footnote{IRMA, Universit\'e de Strasbourg, CNRS UMR 7501, 7 rue Ren\'e Descartes, 67084 Strasbourg, France ({\tt yannick.privat@unistra.fr}).}
}

\date{}

\begin{document}

\maketitle

\begin{abstract}
Minimizing the so-called ``Dirichlet energy'' with respect to the domain under a volume constraint is a standard problem in shape optimization which is now well understood. This article is devoted to a prototypal non-linear version of the problem, where one aims at minimizing a Dirichlet-type energy involving the solution to a semilinear elliptic PDE with respect to the domain, under a volume constraint. One of the main differences with the standard version of this problem rests upon the fact that the criterion to minimize does not write as the minimum of an energy, and thus most of the usual tools to analyze this problem cannot be used.
By using a relaxed version of this problem, we first prove the existence of optimal shapes under several assumptions on the problem parameters. We then analyze the stability of the ball, expected to be a good candidate for solving the shape optimization problem, when the coefficients of the involved PDE are radially symmetric.  
\end{abstract}

\noindent\textbf{Keywords:} shape optimization, Dirichlet energy, existence/stability of optimal shapes.

\medskip

\noindent\textbf{AMS classification:} 49J45, 49K20.



\section{Introduction}\label{secintro}
\subsection{Motivations and state of the art}
Existence and characterization of domains minimizing or maximizing a given shape functional under constraint is a long story. Such issues have been much studied over the last decades (see e.g. \cite{MR2150214,MR2731611,MR2251558,MR1804683,henrot-pierre}). Recent progress has been made in understanding such issues for problems involving for instance spectral functionals (see e.g. \cite{MR3681143}). 

The issue of minimizing the Dirichlet energy (in the linear case) with respect to the domain is a basic and academical shape optimization problem under PDE constraint, which is by now well understood. This problem reads:

\begin{quote}
\textit{Let $d\in \N^*$ and $D$ be a smooth compact set of $\R^d$. Given $g\in \color{black}W^{-1,2}\color{black}(D)$ and $m\leq |D|$, minimize the Dirichlet energy
$$
J(\Omega)=\frac{1}{2}\int_{\Omega}|\nabla u_\Omega|^2-\langle g,u_\Omega\rangle_{\color{black}W^{-1,2}\color{black}(\Omega),W^{1,2}_0(\Omega)},
$$
where $u_\Omega$ is the unique solution of the Dirichlet problem\footnote{in other words
$$
u_\Omega=\underset{u\in W^{1,2}_0(\color{black}\O\color{black})}{\rm argmin}\left\{ \frac{1}{2}\int_{\Omega}|\nabla u|^2-\langle g,u\rangle_{\color{black}W^{-1,2}\color{black}(\Omega),W^{1,2}_0(\Omega)}\right\}.
$$
} on $\Omega$ associated to $g$, among all open bounded sets $\Omega\subset D$ of Lebesgue measure $|\Omega|\leq m$.}
\end{quote}

As such, this problem is not well-posed and it has been shown (see e.g. \cite{MR1713952} or \cite[Chap. 4]{henrot-pierre} for a survey of results about this problem) that optimal sets only exist within the class
\begin{equation}\label{def:Om}
\mathcal{O}_m=\{\Omega \in \mathcal{A}(D), \ |\Omega|\leq m\},
\end{equation}
where $\mathcal{A}(D)$ denotes the class of quasi-open sets\footnote{Recall that $\Omega\subset D$ is said quasi-open whenever there exists a non-increasing sequence $(\omega_n)_{n\in \N}$ such that 
$$
\forall n\in \N, \ \Omega\cup \omega_n\text{ is open }\quad \text{and}\quad \lim_{n\to +\infty}\operatorname{cap}(\omega_n)=0
$$} of $D$. 

This article is motivated by the observation that, in general, the techniques used to prove existence, regularity and even characterization of optimal shapes for this problem rely on the fact  that the functional is "energetic", in other words that the PDE constraint can be handled by noting that the full shape optimization problem rewrites
$$
\min_{\substack{\Omega \in \mathcal{A}(D)\\ |\Omega|\leq m}}\min_{u\in W^{1,2}_0(D)}\left\{ \frac{1}{2}\int_{\Omega}|\nabla u|^2-\langle g,u\rangle_{\color{black}W^{-1,2}\color{black}(\Omega),W^{1,2}_0(\Omega)}\right\}.
$$
In this article, we introduce and investigate a prototypal problem close to the standard ``Dirichlet energy shape minimization'', involving a nonlinear differential operator. The questions we wish to study here concern existence of optimal shapes and stability issues for ``non energetic'' models.  We note that the literature regarding existence and qualitative properties for  non-energetic, non-linear optimization problems is scarce. We nevertheless mention \cite{MNP}, where existence results are established in certain asymptotic regimes for a shape optimization problem arising in population dynamics.

Since our aim is to investigate the optimization problems in the broadest classes of measurable domains, we consider a volume constraint, which is known to lead to potential difficulties. Indeed, the literature in shape optimization is full of optimization problems that are not well-posed under such constraints.

In the \color{black}perturbed \color{black} version of the Dirichlet problem we will deal with, the linear PDE solved by $u_\Omega$ is changed into a nonlinear one but the functional to minimize remains the same. Since, in such a case, the problem is not  "energetic" anymore (in the sense \color{black} described \color{black} above), the PDE constraint cannot be incorporated into the shape functional. This calls for new tools to be developed in order to overcome this difficulty. Among others, we are interested in the following issues:
\begin{itemize}
\item {\bf Existence:} is the resulting shape optimization problem well-posed?
\item {\bf Stability of optimal sets:} given a minimizer  $\O_0^*$ for the Dirichlet energy in the linear case, is $\O_0^*$ still a minimizer when considering a ``small enough'' non-linear perturbation of the problem?
\end{itemize}

This article is organized as follows: the main results, related to the existence of optimal shapes for Problem \eqref{minJ} and the criticality/stability of the ball are gathered in Section \ref{sec:mainresuults}. Section \ref{sec:prooftheoexists} is dedicated to the proofs of the existence results whereas Section \ref{Se:Shape} is dedicated to the proofs of the stability results. 

\subsection{The shape optimization problem}

In what follows, we  consider a modified version of the problem described above, where the involved PDE constraint is now nonlinear.

\begin{quote}
\textit{Let $d\in \N^*$, $D$ a \color{black} smooth \color{black} compact set of $\R^d$, \color{black}d=2,3\color{black}, $g\in L^2(D)$ and $f\in W^{1,\infty}(\R)$. For a small enough positive parameter $\rho$, let $u_\Omega\in W^{1,2}_0(\Omega)$ be  the unique solution of the problem
\begin{equation}\label{eq:u}
\left\{
\begin{array}{ll}
-\Delta u_{\rho,\O}+\rho f(u_{\rho,\O})=g & \textrm{in }\Omega\\
u_{\rho,\O}\in W^{1,2}_0(\O). &  
\end{array}
\right.
\end{equation}
For $m\leq |D|$, solve the problem:
\begin{equation}\label{minJ}
\inf_{\Omega\in \mathcal{O}_m} J_\rho(\Omega)\quad \text{where }J_\rho(\Omega)=\frac{1}{2}\int_{\Omega}|\nabla u_{\rho,\Omega}|^2-\int_\Omega g u_{\rho,\Omega},
\end{equation}
where $\mathcal{O}_m$ is defined in \eqref{def:Om}.
}
\end{quote}
In this problem, the smallness assumption on the parameter $\rho$ guarantees the well-posedness of the PDE problem \eqref{eq:u} for generic choices of nonlinearities $f$.
\begin{lemma}\label{lem:existFixPt}
There exists $\underline\rho>0$ such that, for any $\O\in \mathcal{O}_m$,  for any $\rho \in [0,\underline \rho)$, Equation \eqref{eq:u}, understood through its variational formulation,  has a unique solution in $W^{1,2}_0(\Omega)$. 
\end{lemma}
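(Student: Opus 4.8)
The plan is to run a Banach fixed-point argument, the only subtlety being that the contraction ratio must be chosen uniformly over all admissible domains. If $\|f'\|_{L^\infty(\R)}=0$ then $f$ is constant and \eqref{eq:u} is linear, solved directly by Lax--Milgram, so assume $L_f:=\|f'\|_{L^\infty(\R)}>0$ (so that $f$ is $L_f$-Lipschitz). Since $D$ is bounded, there is a Poincaré constant $C_P=C_P(D)>0$ with $\|v\|_{L^2(D)}\le C_P\|\nabla v\|_{L^2(D)}$ for all $v\in W^{1,2}_0(D)$. For a quasi-open set $\Omega\subset D$, extension by zero maps $W^{1,2}_0(\Omega)$ into $W^{1,2}_0(D)$ and preserves both the $L^2$-norm and the gradient norm, so the same inequality holds on every $\Omega\in\mathcal{O}_m$ with the \emph{same} constant $C_P$. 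I then set $\underline\rho:=1/(L_fC_P^2)$, which depends only on $D$ and $f$.

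Fix $\Omega\in\mathcal{O}_m$ and $\rho\in[0,\underline\rho)$. For $v\in L^2(\Omega)$, the function $f(v)$ is measurable and, since $|f(v)|\le|f(0)|+L_f|v|$ and $|\Omega|<\infty$, lies in $L^2(\Omega)\hookrightarrow W^{-1,2}(\Omega)$; hence $g-\rho f(v)\in W^{-1,2}(\Omega)$, and by Lax--Milgram (coercivity of $(u,\varphi)\mapsto\int_\Omega\nabla u\cdot\nabla\varphi$ on $W^{1,2}_0(\Omega)$ coming from the above Poincaré inequality) there is a unique $T(v)\in W^{1,2}_0(\Omega)$ with $-\Delta T(v)=g-\rho f(v)$ in the weak sense. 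This defines $T\colon L^2(\Omega)\to L^2(\Omega)$, whose fixed points are precisely the $W^{1,2}_0(\Omega)$-solutions of the variational formulation of \eqref{eq:u}.

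To show $T$ is a contraction, take $v_1,v_2\in L^2(\Omega)$, set $u_i:=T(v_i)$, and test $-\Delta(u_1-u_2)=-\rho\bigl(f(v_1)-f(v_2)\bigr)$ against $u_1-u_2$. Using the Lipschitz bound on $f$, Cauchy--Schwarz, and the Poincaré inequality,
$$\|\nabla(u_1-u_2)\|_{L^2(\Omega)}^2\le \rho L_f\,\|v_1-v_2\|_{L^2(\Omega)}\,\|u_1-u_2\|_{L^2(\Omega)}\le \rho L_f C_P\,\|v_1-v_2\|_{L^2(\Omega)}\,\|\nabla(u_1-u_2)\|_{L^2(\Omega)},$$
so that $\|u_1-u_2\|_{L^2(\Omega)}\le C_P\|\nabla(u_1-u_2)\|_{L^2(\Omega)}\le \rho L_f C_P^2\,\|v_1-v_2\|_{L^2(\Omega)}$. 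Since $\rho L_f C_P^2<1$, $T$ is a contraction on the complete space $L^2(\Omega)$; it therefore has a unique fixed point $u_{\rho,\Omega}$, which lies in $W^{1,2}_0(\Omega)$ because $u_{\rho,\Omega}=T(u_{\rho,\Omega})$. Uniqueness in $W^{1,2}_0(\Omega)$ follows since every solution is a fixed point of $T$ (alternatively, subtract two solutions and test against their difference, obtaining $\|\nabla(u_1-u_2)\|^2\le \rho L_f C_P^2\|\nabla(u_1-u_2)\|^2$, hence $u_1=u_2$).

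The only genuinely delicate point is the uniformity of the threshold $\underline\rho$ in $\Omega$, i.e. the domain-independent Poincaré inequality; this is exactly why one passes through the ambient box $D$ and uses that zero-extension embeds $W^{1,2}_0(\Omega)$ into $W^{1,2}_0(D)$ isometrically for the Dirichlet seminorm, valid even for quasi-open $\Omega$. Everything else is classical. One may equivalently run the contraction directly in $W^{1,2}_0(\Omega)$ equipped with $\|\nabla\cdot\|_{L^2(\Omega)}$, with the same threshold.
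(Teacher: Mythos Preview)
Your proof is correct and follows essentially the same approach as the paper: a Banach fixed-point argument for the map $u\mapsto (-\Delta)^{-1}(g-\rho f(u))$, with the contraction ratio controlled uniformly in $\Omega$ via the Poincar\'e constant of the ambient box $D$ (equivalently, via the monotonicity $\lambda_1(\Omega)\geq\lambda_1(D)$, which gives the paper's threshold $\underline\rho=\lambda_1(D)/\|f\|_{W^{1,\infty}}$). Your presentation is somewhat more detailed, but the argument is the same.
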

This follows from a simple fixed-point argument: let $\lambda_1(\O)$ be the first eigenvalue of the Dirichlet Laplacian on $\O$. We note that the operator
$$
T:\begin{array}[t]{rcl}
W^{1,2}_0(\Omega) & \longrightarrow & W^{1,2}_0(\Omega)\\
u & \longmapsto & w_\Omega,
\end{array}
$$
where $w_\Omega$ is the unique solution of
$$
\left\{
\begin{array}{ll}
-\Delta w-g=-\rho f(u) & \textrm{in }\Omega\\
w\in W^{1,2}_0(\Omega), &  
\end{array}
\right.
$$
is Lipschitz with Lipschitz constant $C_T(\O)$ such that $C_T(\O)\leq \rho \frac1{\lambda_1(\O)}\Vert f\Vert_{W^{1,\infty}}$. By the monotonicity of $\lambda_1$ with respect to domain inclusion (see \cite{henrot2006}), we have, for every $\Omega \in \mathcal{O}_m$, $\lambda_1(D)\leq \lambda_1(\O)$, so that $C_T(\O)\leq \frac{\rho \Vert f\Vert_{W^{1,\infty}}}{\lambda_1(D)}$.
%
%

\section{Main results of the paper}\label{sec:mainresuults}
\subsection{Existence results}

We state hereafter a partial existence result inherited from the linear case. 
Indeed, we will exploit a monotonicity property of the shape functional $J_\rho$ together with its lower-semi continuity for the $\gamma$-convergence to apply the classical  theorem by Buttazzo-DalMaso (see Subsection \ref{Outline}). 
Our approach takes advantage of the analysis of a relaxed formulation of Problem \eqref{minJ}. To introduce it, let us first consider a given box $D\subset \R^n$ (i.e a smooth, compact subset of $\R^n$) such that $|D|> \color{black}m\color{black}$. 

In the minimization problem \eqref{minJ}, let us identify a shape $\Omega$ with its characteristic function $\mathbbm{1}_\Omega$. 
This leads to introducing the ``relaxation'' set
$$
\widehat{\mathcal{O}}_m=\left\{a\in L^\infty(D,[0,1])\text{ such that }\int_D a \leq m\right\}
$$

For a given positive relaxation parameter $M$, we define the (relaxed) functional $\hat{J}_{M,\rho}$ by
\begin{equation}\label{defHatJrho}
\hat J_{M,\rho}(a)=\frac{1}{2}\int_{D}|\nabla u_{M,\rho,a}|^2+\frac{M}{2}\int_{\color{black}D\color{black}} (1-a)u_{M,\rho,a}^2-\int_{\color{black}D\color{black}} g u_{\color{black}M,\rho,a},
\end{equation}
for every $a\in \widehat{\mathcal{O}}_m$, where $u_{M,\rho,a}\in W^{1,2}_0(D)$ denotes the unique solution of the non-linear problem
\begin{equation}\label{eq:ua}
\left\{
\begin{array}{ll}
-\Delta u_{M,\rho,a}+M(1-a)u_{M,\rho,a}+\rho f(u_{M,\rho,a})=g & \textrm{in }D\\
u_{M,\rho,a}\in W^{1,2}_0(D). &  
\end{array}
\right.
\end{equation}
Our existence result involves a careful asymptotic analysis of $u_{M,\rho,a}$ as $\rho \to 0$ to derive a monotonicity property.

Standard elliptic estimates entail that, for every  $ M>0$ and $a\in  \widehat{\mathcal{O}}_m$, one has $u_{M,\rho,a}\in \mathscr C^0(\overline \O).$
\begin{remark}
Such an approximation of $u_{\rho,\Omega}$ is rather standard in the framework of fictitious domains. The introduction of the term $M(1-a)$ in the PDE has an interpretation in terms of porous materials (see e.g. \cite{Evgrafov}) and it may be expected that $u_{M,\rho,a}$ converges in some sense to $u_{\rho,\Omega}$ as $M\to +\infty$ and whenever $a=\mathbbm{1}_\Omega$. This will be confirmed in the analysis to follow.
\end{remark}

Roughly speaking, the existence result stated in what follows requires  the right-hand side of equation \eqref{eq:u} to have  a constant sign. To write the hypothesis down, we need a few notations related to the relaxed problem \eqref{eq:ua}, which is the purpose of the next lemma.
\begin{lemma}\label{claim:borne}
Let $m\in [0,|D|]$, $a\in \widehat{\mathcal{O}}_m$ and $g\in L^2(D)$ be nonnegative.  There exists a positive constant $N_{m,g}$ such that
\begin{equation}
\forall a \in \widehat{\mathcal{O}}_m,\, \forall M>0,\, \forall \rho \in [0,\underline \rho),\quad  \Vert u_{M,\rho,a}\Vert_\infty\leq N_{m,g},
\end{equation}
where $\underline \rho$ is defined in Lemma \ref{lem:existFixPt}, $u_{M,\rho,a}$ denotes the unique solution to \eqref{eq:ua}. In what follows, $N_{m,g}$ will denote the optimal constant in the inequality above, namely
$$
N_{m,g}= \sup \{\Vert u_{M,\rho,a}\color{black}\Vert_\infty\color{black}, \ a \in \widehat{\mathcal{O}}_m,  M>0,  \rho \in [0,\underline \rho)\}.
$$
\end{lemma}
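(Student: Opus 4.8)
I would exploit that, although \eqref{eq:ua} is nonlinear, its nonlinear term is \emph{uniformly bounded}, namely $|\rho f(u_{M,\rho,a})|\le\rho\|f\|_{L^\infty(\R)}\le\underline\rho\|f\|_{L^\infty(\R)}$, whereas the zeroth order coefficient $M(1-a)$ is \emph{nonnegative}. One can therefore trap $u_{M,\rho,a}$ between two linear barriers depending neither on $M$ and $a$ nor on the precise value of $\rho$, and conclude by the weak comparison principle for the operator $-\Delta+M(1-a)$ on the fixed smooth box $D$.

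In detail, I would fix $a\in\widehat{\mathcal{O}}_m$, $M>0$, $\rho\in[0,\underline\rho)$, and write $v:=u_{M,\rho,a}$ and $F:=\|f\|_{L^\infty(\R)}$. Since $g\ge 0$ and $|\rho f(v)|\le\underline\rho F$, the function $v$ satisfies, distributionally on $D$,
$$-\Delta v+M(1-a)v\le g+\underline\rho F\qquad\text{and}\qquad-\Delta v+M(1-a)v\ge-\underline\rho F.$$
I would then introduce $\phi,\psi\in W^{1,2}_0(D)$ solving $-\Delta\phi=g+\underline\rho F$ and $-\Delta\psi=\underline\rho F$ respectively. Both right-hand sides are nonnegative and belong to $L^2(D)$; since $d\in\{2,3\}$, elliptic regularity (say $W^{2,2}(D)\hookrightarrow\mathscr C^0(\overline D)$, or Stampacchia's $L^\infty$ estimate, which applies because $L^2(D)\hookrightarrow L^p(D)$ with $p=2>d/2$) gives $\phi,\psi\in L^\infty(D)$, while the maximum principle gives $0\le\psi\le\phi$ on $D$ (as $-\Delta(\phi-\psi)=g\ge 0$). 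Crucially, $\|\phi\|_{L^\infty(D)}$ depends only on $D$, $g$ and $\underline\rho\|f\|_{L^\infty(\R)}$, hence not on $M$, $a$ or $\rho$ (and not on $m$ either).

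The last step is a comparison. The function $\phi-v\in W^{1,2}_0(D)$ solves $-\Delta(\phi-v)+M(1-a)(\phi-v)=\underline\rho F+\rho f(v)+M(1-a)\phi$, whose right-hand side is $\ge(\underline\rho-\rho)F+M(1-a)\phi\ge 0$; testing this equation against $(\phi-v)^-\in W^{1,2}_0(D)$ and using $M(1-a)\ge 0$ yields $\|\nabla(\phi-v)^-\|_{L^2(D)}^2\le 0$, hence $v\le\phi$ a.e.\ on $D$. Symmetrically, $\psi+v\in W^{1,2}_0(D)$ solves $-\Delta(\psi+v)+M(1-a)(\psi+v)=\underline\rho F+g-\rho f(v)+M(1-a)\psi\ge(\underline\rho-\rho)F+g+M(1-a)\psi\ge 0$, so the same argument gives $v\ge-\psi$ a.e.\ on $D$. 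Combining, $|v|\le\phi\le\|\phi\|_{L^\infty(D)}$ a.e.\ on $D$, so that $\sup\{\|u_{M,\rho,a}\|_{\infty}:a\in\widehat{\mathcal{O}}_m,\,M>0,\,\rho\in[0,\underline\rho)\}\le\|\phi\|_{L^\infty(D)}<\infty$, which is the claimed uniform estimate (with $N_{m,g}\le\|\phi\|_{L^\infty(D)}$). I do not expect a genuine obstacle: the only two points requiring care are that the comparison principle is used with the merely $L^\infty$, nonnegative potential $M(1-a)$ (harmless on the fixed smooth box $D$ with $W^{1,2}_0$ data), and that the barriers $\phi,\psi$ are honestly bounded, which is exactly where $d\le 3$ enters. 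Alternatively one could avoid $\phi,\psi$ and run a De Giorgi--Stampacchia truncation directly on $v$: testing \eqref{eq:ua} against $(v-k)^+$ and discarding the $M(1-a)$ term yields $\int_D|\nabla(v-k)^+|^2\le\int_D(g+\underline\rho F)(v-k)^+$, and the standard iteration lemma produces the same $M,a,\rho$-independent bound.
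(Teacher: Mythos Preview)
Your proof is correct and follows essentially the same barrier/comparison strategy as the paper: both trap $u_{M,\rho,a}$ between $M,a,\rho$-independent functions by exploiting that $\rho f$ is uniformly bounded and that the potential $M(1-a)\geq 0$ preserves the weak maximum principle on the fixed box $D$, then invoke $W^{2,2}\hookrightarrow\mathscr C^0$ in dimensions $2,3$ to conclude. The only difference is the choice of barrier: the paper compares with the \emph{nonlinear} solution $\phi_g$ of $-\Delta\phi_g+\rho f(\phi_g)=g$ on $D$ (then argues the bound is uniform in $\rho$), whereas you use the \emph{linear} barriers $\phi,\psi$ with right-hand sides $g+\underline\rho F$ and $\underline\rho F$; your variant is slightly more self-contained since the barriers are manifestly independent of $\rho$ and you do not need to first establish $u_{M,\rho,a}\geq 0$ (a step the paper uses implicitly when writing $M(1-a)u_{M,\rho,a}\geq 0$).
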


This follows from standard arguments postponed to Section \ref{Annexe:preuveClaim}.

We now state the main results of this section. Let us introduce the assumptions we will consider hereafter:
\begin{itemize}
\item[$\Hun$] There exist two positive numbers $g_0$, $g_1$ such that $g_0<g_1$ and $g_0\leq g(\cdot)\leq g_1$ a.e. in $D$.
\item[$\Hdeux$] One has $f\in W^{1,\infty}(\R)\cap \color{black}\mathcal D^2\color{black}$, where $\color{black}\mathcal D^2\color{black}$ is the set of twice differentiable functions (with second derivatives not necessarily continuous). Moreover, $f(0)\leq 0$ and there exists $\delta >0$ such that the mapping $x\mapsto x f(x)$ is non-decreasing on  $[0, N_{m,g}+\delta]$ where $N_{m,g}$ is given by Lemma \ref{claim:borne}.
\end{itemize}
\begin{theorem}\label{theo:exist}
Let us assume that one of the following assumptions holds true:
\begin{itemize}
\item $g$ or $-g$ satisfies the assumption $\Hun$;
\item $g$ is non-negative and the function $f$ satisfies the assumption $\Hdeux$ or $g$ is non-positive and the function $-f$ satisfies the assumption $\Hdeux$;
\end{itemize}

Then, there exists a positive constant $\rho_0=\rho_0(D,f(0),\Vert f\Vert_{W^{1,\infty}},\color{black} g_0,g_1\color{black})$ such that the shape optimization problem \eqref{minJ} has a solution $\Omega^*$ for every $\rho\in (0,\rho_0)$.\color{black} Furthermore, $|\Omega^*|=m$.\color{black}
\end{theorem}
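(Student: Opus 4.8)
The plan is to follow the Buttazzo–Dal Maso strategy as announced in the text: recast the problem in the class of quasi-open sets with the $\gamma$-convergence topology, verify that $J_\rho$ is $\gamma$-lower semicontinuous and monotone non-increasing with respect to domain inclusion, and conclude existence by the compactness of $\mathcal{O}_m$ for $\gamma$-convergence together with the fact that the volume functional $\Omega\mapsto|\Omega|$ is $\gamma$-lower semicontinuous. The role of the relaxed functional $\hat J_{M,\rho}$ is to transfer these two structural properties (monotonicity and semicontinuity) from the penalized problem on the fixed box $D$ — where $u_{M,\rho,a}$ depends continuously on $a$ through standard elliptic estimates — to the original sharp-interface functional $J_\rho$ by letting $M\to+\infty$; the identification $u_{M,\rho,\mathbbm{1}_\Omega}\to u_{\rho,\Omega}$ in $W^{1,2}_0$ (hence $\hat J_{M,\rho}(\mathbbm{1}_\Omega)\to J_\rho(\Omega)$) is the bridge, and $\gamma$-convergence of $\Omega_n\to\Omega$ can be tested against these relaxed approximations uniformly in $n$.

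First I would record the a priori bound $\|u_{\rho,\O}\|_\infty\le N_{m,g}$ (the sharp-interface analogue of Lemma~\ref{claim:borne}, obtained by the same truncation argument or by passing to the limit $M\to\infty$) so that the nonlinear term $\rho f(u_{\rho,\O})$ lives in a region where, under $\Hdeux$, the map $x\mapsto xf(x)$ is monotone; this is exactly what is needed to run a comparison/energy argument. Next I would prove the \emph{monotonicity} $\Omega_1\subset\Omega_2\Rightarrow J_\rho(\Omega_1)\ge J_\rho(\Omega_2)$. In the linear case this is classical because $J_\rho(\Omega)=\min_{u\in W^{1,2}_0(\Omega)}E(u)$ and enlarging $\Omega$ enlarges the admissible set. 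Here $J_\rho$ is not a min, so the plan is to write $J_\rho(\Omega)$ using the PDE: testing \eqref{eq:u} with $u_{\rho,\O}$ gives $\int_\Omega|\nabla u_{\rho,\O}|^2 = \int_\Omega g u_{\rho,\O} - \rho\int_\Omega f(u_{\rho,\O})u_{\rho,\O}$, whence
\begin{equation*}
J_\rho(\Omega)=-\frac12\int_\Omega g u_{\rho,\O} - \frac{\rho}{2}\int_\Omega f(u_{\rho,\O})u_{\rho,\O}.
\end{equation*}
Under $\Hun$ (say $g\ge g_0>0$) the solution $u_{\rho,\O}$ is nonnegative for $\rho<\rho_0$ by a maximum-principle argument (using $f(0)\le 0$ or smallness of $\rho$ relative to $\lambda_1(D)$ and $\|f\|_{W^{1,\infty}}$), and one shows $\Omega\mapsto\int g u_{\rho,\O}$ is monotone non-decreasing and $\Omega\mapsto\int f(u_{\rho,\O})u_{\rho,\O}$ has a sign, both via the comparison principle for \eqref{eq:u} combined with the monotone structure given by $\Hdeux$; this should be carried out first on the relaxed level (where comparison between $u_{M,\rho,a_1}$ and $u_{M,\rho,a_2}$ for $a_1\le a_2$ is cleaner) and then passed to the limit.

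Then I would establish \emph{$\gamma$-lower semicontinuity} of $J_\rho$: if $\Omega_n\xrightarrow{\gamma}\Omega$ then $w_{\Omega_n}\to w_\Omega$ in $W^{1,2}_0(D)$ where $w$ solves the \emph{linear} Dirichlet problem; using this, uniform $L^\infty$ bounds and the Lipschitz bound on $f$, a fixed-point/perturbation argument upgrades this to $u_{\rho,\Omega_n}\to u_{\rho,\Omega}$ in $W^{1,2}_0(D)$, which gives continuity (hence lsc) of $J_\rho$ along $\gamma$-converging sequences. With monotonicity and $\gamma$-lsc in hand, the Buttazzo–Dal Maso theorem applies and yields a minimizer $\Omega^*\in\mathcal O_m$; the saturation $|\Omega^*|=m$ then follows from the monotonicity of $J_\rho$ (a larger set within the constraint cannot increase the value, so one may always inflate until the volume constraint is active) — here one must also check the relaxed inequality $\inf_{\mathcal O_m}J_\rho=\inf_{\widehat{\mathcal O}_m}$-type statement or simply argue directly that any $\Omega$ with $|\Omega|<m$ is dominated by an enlargement. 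The main obstacle I anticipate is the monotonicity step: without the energy-minimization characterization one must extract the sign of $\frac{d}{dt}J_\rho$ along inclusions purely from comparison principles, and this is precisely where the hypotheses $\Hun$–$\Hdeux$ and the smallness of $\rho$ (to keep $u_{\rho,\O}\ge 0$ and to keep the nonlinear correction from destroying the linear-case sign) are essential; making the dependence $\rho_0=\rho_0(D,f(0),\|f\|_{W^{1,\infty}},g_0,g_1)$ explicit requires tracking these comparison estimates quantitatively.
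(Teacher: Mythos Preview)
Your scaffolding---$\gamma$-continuity, monotonicity with respect to inclusion, Buttazzo--Dal Maso, then saturation of the volume constraint from monotonicity---matches the paper, and your $\gamma$-continuity argument is correct (the paper in fact proves full continuity). The gap is in the monotonicity step, where you also diverge from the paper's method. The paper does not compare $u_{\rho,\Omega_1}$ with $u_{\rho,\Omega_2}$ directly; it differentiates the relaxed functional and obtains $\langle d\hat J_{M,\rho}(a),h\rangle=-M\int_D h\,u\,U$ with $U=\tfrac12 u+v$, where $v$ is an \emph{adjoint state} solving $-\Delta v+M(1-a)v+\rho f'(u)v=\rho f(u)$. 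One then checks that $U$ satisfies
\[
-\Delta U+\bigl(M(1-a)+\rho f'(u)\bigr)U=\tfrac{\rho}{2}\bigl(f(u)+uf'(u)\bigr)+\tfrac{g}{2},
\]
and the generalised maximum principle yields $u\ge0$ and $U\ge0$: under $\Hun$ because the right-hand side is $\ge g_0/2-C\rho>0$ with no structural hypothesis on $f$; under $\Hdeux$ because $f(u)+uf'(u)=(xf(x))'|_{x=u}\ge0$. The mean value theorem then gives $\hat J_{M,\rho}(a_2)\le\hat J_{M,\rho}(a_1)$ whenever $a_1\le a_2$. This adjoint/switching-function mechanism is the key lemma you are missing.

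Your alternative route via $J_\rho(\Omega)=-\tfrac12\int gu-\tfrac{\rho}{2}\int uf(u)$ can be salvaged, but as written it breaks under $\Hun$: you invoke ``the monotone structure given by $\Hdeux$'', which is not assumed in that case, and ``$\int f(u_\Omega)u_\Omega$ has a sign'' is the wrong kind of statement---a sign is not monotonicity in $\Omega$. The correct repair under $\Hun$ is to first prove the pointwise comparison $u_{M,\rho,a_1}\le u_{M,\rho,a_2}$ for $a_1\le a_2$ (subtract the two equations and apply the maximum principle, valid once $\rho\Vert f'\Vert_\infty<\lambda_1(D)$ and $u_1\ge0$), and then estimate
\[
\hat J_{M,\rho}(a_1)-\hat J_{M,\rho}(a_2)=\tfrac12\int g(u_2-u_1)+\tfrac{\rho}{2}\int\bigl(u_2f(u_2)-u_1f(u_1)\bigr)\ge\tfrac12(g_0-C\rho)\int(u_2-u_1)\ge0,
\]
using only that $x\mapsto xf(x)$ is Lipschitz on the bounded range $[0,N_{m,g}]$. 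Under $\Hdeux$ your plan is closer to correct (monotonicity of $xf(x)$ together with $u_1\le u_2$ does give monotonicity of $\int uf(u)$), but you still owe the comparison $u_1\le u_2$ as an explicit lemma rather than a parenthetical remark.
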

\begin{remark}
%
The proof of Theorem \ref{theo:exist} rests upon a monotonicity property of the relaxed functional $\hat J_{M,\rho}$ given by \eqref{defHatJrho}. This is the first ingredient that subsequently allows the well-known existence result of Buttazzo and Dal-Maso to be applied.

It is natural to wonder whether or not it would be possible to obtain this  result in a more direct way, for instance by using shape derivatives \color{black} to obtain a monotonicity property. In other words, an idea could be to consider, for a set $E$ whose boundary satisfies minimal regularity assumptions, and for a vector field $V:\partial E\to \R^n$, the shape derivative
$$\lim_{\e\to 0}\frac{J_\rho((\operatorname{Id}+\e V)E)-J_\rho(E)}\e$$ and to prove that this quantity is positive whenever $V\cdot \nu>0$ on $\partial E$. \color{black} We claim that such an approach would require considering domains $\Omega$ \color{black}satisfying a minimum regularity assumption\color{black}, so that the shape derivative (in the sense of Hadamard) of $J_\rho$ at $\Omega$ in direction $V$, where $V$ denotes an adequate vector field, both makes sense and can be written in a workable way (as the integral of the shape gradient multiplied by $V\cdot \nu$). \color{black} We would then need to extend this property to quasi-open sets, as the set of set satisfying such regularity assumptions are not closed for $\gamma$-convergence, which is the natural topology for this class of optimisation problems.\color{black}This relaxed version enables us \color{black}to work with quasi-open sets directly.\color{black}

\color{black}
It is interesting to note that Theorem~\ref{theo:exist} also yields an existence result when restricting ourselves to the set $\tilde{\mathcal O}_m:=\left\{\O\text{ quasi-open, }|\O|=m\right\}$, since Theorem \ref{theo:exist} ensures that, under the appropriate assumption, the optimiser fulfills the volume constraint. 
\color{black}

\end{remark}

\color{black}
We end this section by providing an example where existence within the class of open sets does not hold. It thus shows that it is in general hopeless to get a general existence property for this kind of problem, even by assuming stronger regularity on $f$ and $g$. 
Let us consider the case where $g=0$ and the function $f$ is such that 
\begin{equation}\tag{$\bold{H_4}$}\label{H4}
f(0)<0\quad\text{and}\quad x \mapsto xf(x) \text{ is decreasing.}
\end{equation}
An example of such $f$ is $f(x)=-e^{x^2}.$ In order to make it a globally $W^{1,\infty}(\R)$ function, one can truncate $f$ outside of a large interval $[-M,M]$ and retain Property $(\bold{H_4}$).

\begin{theorem}\label{Pr:NE}
If $g=0$ and $f$ satisfies $(\bold{H_4})$, if the optimization problem \eqref{minJ} has a solution $\O$, then $\O$ has no interior point. In particular, the problème of minimizing $J_\rho(\Omega)$ given by \eqref{minJ} over the set of open domains $\Omega$ such that $|\Omega|\leq m$ has no solution. 
\end{theorem}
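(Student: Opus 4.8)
The plan is to argue by contradiction. Suppose $\Omega$ is a solution of \eqref{minJ} with $g=0$ and $f$ satisfying $(\bold{H_4})$, and suppose that $\Omega$ has an interior point, i.e.\ it contains some ball $B(x_0,r)$. Since $g=0$, the PDE \eqref{eq:u} becomes $-\Delta u_{\rho,\Omega}+\rho f(u_{\rho,\Omega})=0$ on $\Omega$ with homogeneous Dirichlet data, and the functional reduces to $J_\rho(\Omega)=\frac12\int_\Omega |\nabla u_{\rho,\Omega}|^2$. First I would record the sign information: testing the equation against $u_{\rho,\Omega}$ gives $\int_\Omega |\nabla u_{\rho,\Omega}|^2 = -\rho\int_\Omega f(u_{\rho,\Omega})u_{\rho,\Omega}$, so by $(\bold{H_4})$ (which says $xf(x)$ is decreasing and negative near $0$, hence $xf(x)\le xf(x)|_{x=0}$-type bounds — more usefully, $xf(x)<0$ for $x$ in a neighbourhood of $0$ other than $0$, and is in fact $\le 0$ wherever relevant after noting $0\cdot f(0)=0$ is the max) one gets $J_\rho(\Omega)\ge 0$, with equality iff $u_{\rho,\Omega}\equiv 0$; but $f(0)<0$ forces $u_{\rho,\Omega}\not\equiv 0$ (the zero function does not solve the PDE since $\rho f(0)\neq 0$). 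Hence $J_\rho(\Omega)>0$ for every admissible $\Omega$ of positive measure, and in particular $\inf J_\rho \ge 0$.

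The core of the argument is then to show $\inf_{\mathcal O_m} J_\rho = 0$, so that no set of positive measure — in particular no open set — can be a minimizer, and moreover any minimizer must have empty interior (since possessing an interior ball would, via a lower bound below, force $J_\rho$ to be bounded away from $0$ on that set). To see $\inf J_\rho=0$ I would construct a minimizing sequence of ``thin'' sets: take $\Omega_\varepsilon$ a disjoint union of many small balls, or a thin tubular/slab-like region, with $|\Omega_\varepsilon|=m$ but with a diameter-type or ``width'' parameter shrinking, so that $\lambda_1(\Omega_\varepsilon)\to+\infty$. On such a set, the uniform $L^\infty$ bound from Lemma~\ref{claim:borne} (applicable in the relaxed picture, and transferring to $u_{\rho,\Omega}$ by taking $M\to\infty$, or proved directly by the same maximum-principle argument) combined with the Poincaré inequality $\int_{\Omega_\varepsilon}|\nabla u|^2 \ge \lambda_1(\Omega_\varepsilon)\int_{\Omega_\varepsilon} u^2$ and the energy identity $\int_{\Omega_\varepsilon}|\nabla u_{\rho,\Omega_\varepsilon}|^2 = -\rho\int_{\Omega_\varepsilon} f(u_{\rho,\Omega_\varepsilon})u_{\rho,\Omega_\varepsilon} \le \rho \|f\|_\infty N_{m,0}\,|\Omega_\varepsilon|$ gives $J_\rho(\Omega_\varepsilon) \le \tfrac12\rho\|f\|_\infty N_{m,0}\, m \to$ (a fixed bound, not $0$). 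So that crude bound is not enough on its own; I would instead bootstrap: from $\lambda_1(\Omega_\varepsilon)\int u^2 \le \int|\nabla u|^2 \le \rho\|f\|_\infty N_{m,0}\,m$ we get $\int_{\Omega_\varepsilon} u_{\rho,\Omega_\varepsilon}^2 \le C/\lambda_1(\Omega_\varepsilon)\to 0$, and then re-inserting into $\int|\nabla u|^2 = -\rho\int f(u)u \le \rho\, L \int |u|\,|u| \cdot(\text{bounded}) \le \rho L N_{m,0} \big(|\Omega_\varepsilon|\big)^{1/2}\big(\int u^2\big)^{1/2}$... — the cleanest route is: $\int|\nabla u_{\rho}|^2 = -\rho\int f(u_\rho)u_\rho \le \rho \|f\|_{W^{1,\infty}} \int |u_\rho|\cdot|u_\rho| \le \rho\|f\|_{W^{1,\infty}}\, m^{1/2}\Big(\int u_\rho^2\Big)^{1/2}$ is not quite it either since $f(u)u$ need not be sign-definite pointwise, but using $|f(u)u|\le \|f\|_\infty |u|$ and then $\int|u|\le m^{1/2}(\int u^2)^{1/2}$ together with $\int u^2 \le \lambda_1(\Omega_\varepsilon)^{-1}\int|\nabla u|^2$ yields $\int|\nabla u_\rho|^2 \le \rho\|f\|_\infty m^{1/2}\lambda_1(\Omega_\varepsilon)^{-1/2}(\int|\nabla u_\rho|^2)^{1/2}$, hence $\int|\nabla u_\rho|^2 \le \rho^2\|f\|_\infty^2 m\,\lambda_1(\Omega_\varepsilon)^{-1}\to 0$. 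This gives $J_\rho(\Omega_\varepsilon)\to 0$, so $\inf_{\mathcal O_m}J_\rho = 0$.

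Finally I would combine the two halves. If $\Omega$ were a minimizer, then $J_\rho(\Omega)=0$, which by the first step (equality case of $J_\rho\ge 0$, forcing $u_{\rho,\Omega}\equiv 0$, contradicting $f(0)<0$) is impossible for any $\Omega$ of positive measure; more precisely, to get the sharper conclusion that a minimizer (if it exists at all, e.g.\ among quasi-open sets via Theorem~\ref{theo:exist} — note here $g=0$ is nonnegative and $-f$ satisfies $\Hdeux$ when $f$ satisfies $(\bold{H_4})$, so existence of a quasi-open minimizer with $|\Omega^*|=m$ does hold) has no interior point, I would show that any set $\Omega$ containing a ball $B(x_0,r)$ satisfies $J_\rho(\Omega)\ge c(r)>0$. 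For this, use the monotonicity of $J_\rho$ with respect to domain inclusion that underlies Theorem~\ref{theo:exist} (larger domain, smaller or equal energy — established in the existence proof), so $J_\rho(\Omega)\ge J_\rho(B(x_0,r))$... wait, monotonicity goes the other way, so instead I would directly lower-bound: on $B(x_0,r)$ the solution is radial, nonzero (again since $f(0)<0$), with a Poincaré-type lower bound $\int_{B(x_0,r)}|\nabla u|^2 \ge$ a positive constant depending only on $r$, $\rho$, $f(0)$ (one can get a uniform lower bound on $\|u\|_{L^2}$ on a fixed ball because $u$ solves $-\Delta u = -\rho f(u)$ with $-\rho f(u)\ge -\rho f(0) >0$ bounded below near $u=0$, so $u \ge \rho|f(0)| w_r/2$ for the torsion function $w_r$ when $\rho$ small, giving a lower bound). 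Then, because $J_\rho$ decreases under inclusion and $B(x_0,r)\subset\Omega$, one needs the reverse comparison — the clean way is to note that $u_{\rho,\Omega}$ restricted via the variational characterization dominates (by maximum principle, since $g=0$, $-f\ge |f(0)|>0$ near $0$) a fixed positive subsolution supported in $B(x_0,r)$, giving $\|\nabla u_{\rho,\Omega}\|_{L^2(\Omega)}^2 \ge \|\nabla u_{\rho,\Omega}\|_{L^2(B(x_0,r))}^2 \ge c(r,\rho)>0$, hence $J_\rho(\Omega) \ge c(r,\rho)/... >0 = \inf J_\rho$, a contradiction. The main obstacle I anticipate is precisely this last coercivity/comparison step: getting a clean positive lower bound for $J_\rho$ on sets with an interior ball, since $f(u)u$ is not pointwise sign-definite and the natural monotonicity of $J_\rho$ runs in the unhelpful direction; handling it will require a maximum-principle comparison with an explicit positive subsolution on the ball, using $f(0)<0$ crucially, exactly as in the construction of the counterexample $f(x)=-e^{x^2}$.
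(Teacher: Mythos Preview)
Your global argument is essentially correct but takes a very different route from the paper, and you do not seem to realise that it is already complete well before the point where you start worrying.

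Your steps (1)--(3) suffice: since $g=0$, $J_\rho(\Omega)=\tfrac12\int_\Omega|\nabla u_{\rho,\Omega}|^2\geq 0$, with equality forcing $u_{\rho,\Omega}\equiv 0$, which is impossible whenever $|\Omega|>0$ because the weak formulation then gives $\rho f(0)\int_\Omega v=0$ for all $v\in W^{1,2}_0(\Omega)$, contradicting $f(0)<0$. Your Poincar\'e bootstrap $\int|\nabla u|^2\leq \rho^2\Vert f\Vert_\infty^2 m\,\lambda_1(\Omega)^{-1}$ on thin sets is correct and shows $\inf J_\rho=0$ (in fact $J_\rho(\emptyset)=0$ gives this immediately). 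Hence any minimiser satisfies $|\Omega|=0$ and thus has no interior point. Your ``main obstacle'' in step (4)---a quantitative lower bound $J_\rho(\Omega)\geq c(r)>0$ for sets containing a ball---is entirely unnecessary: a set with an interior point has positive measure, hence $J_\rho>0=\inf$, hence is not a minimiser. Also, your side remark invoking Theorem~\ref{theo:exist} is wrong: $(\bold{H_4})$ says $xf(x)$ is \emph{decreasing}, so neither $f$ nor $-f$ satisfies $\Hdeux$ (which requires $xf(x)$ non-decreasing); the whole point of Theorem~\ref{Pr:NE} is that it lies outside the existence regime.

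The paper proceeds quite differently, via a local topological-derivative argument: assuming an interior point $x_0$, it excises a small ball to form $\Omega_\varepsilon=\Omega\setminus B(x_0,\varepsilon)$ and uses the expansion
\[
J_\rho(\Omega_\varepsilon)=J_\rho(\Omega)+\pi\varepsilon^2\,u_{\rho,\Omega}(x_0)\,U_{\rho,\Omega}(x_0)+o(\varepsilon^2),
\]
where $U_{\rho,\Omega}$ is the adjoint state solving $-\Delta U+\rho f'(u)U=\tfrac{\rho}{2}(f(u)+uf'(u))$. The strong maximum principle (using $f(0)<0$) gives $u_{\rho,\Omega}(x_0)>0$, while $(\bold{H_4})$ makes the right-hand side of the adjoint equation negative, forcing $U_{\rho,\Omega}(x_0)<0$; hence $J_\rho(\Omega_\varepsilon)<J_\rho(\Omega)$ for small $\varepsilon$, contradicting minimality. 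This argument is the ``reverse monotonicity'' counterpart to the existence proof (under $(\bold{H_4})$ the switching function changes sign and $J_\rho$ becomes \emph{increasing} for inclusion), which is why the paper favours it. Your approach is more elementary and avoids the topological-derivative machinery, at the cost of being less informative about \emph{why} the functional behaves badly---the paper's proof exhibits the mechanism (punching holes always helps), while yours only certifies that the infimum is degenerate.
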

\begin{remark}
As will be emphasized in the proof, the key ingredient is that, when $g= 0$ and $f$ satisfies \eqref{H4}, the functional $J_\rho$ is increasing for the inclusion of sets.
\end{remark}
\color{black}


%
%

\subsection{Stability results}  
In what follows, we will work in $\R^2$.\color{black}  We assume that $D$ is large enough so that there exists a centered ball $\B^*$ included in $D$ \color{black} such that $\color{black}|\B^*|=m$\color{black}. We denote by $R>0$ the radius of $\B^*$ and introduce $\mathbb S^*=\partial \B^*$. The notation $\nu$ stands for the outward unit vector on $\mathbb S^*$, in other words $\nu(x)=x/|x|$ for all $x\in \mathbb S^*$.

In this section, we will discuss the local optimality of the ball for small nonlinearities. We will in particular highlight that the local optimality of the ball can be either preserved or lost depending on the choice of the right-hand side $g$. Indeed, if $\rho=0$ and if $g$ is  radially symmetric and non-increasing, the Schwarz rearrangement\footnote{see e.g.  \cite{kawohl} for an introduction to the Schwarz rearrangement.} ensures that, for any $\O\in \mathcal O_m$, $J_0(\O)\geq J_0(\B^*)$.
Without such assumptions, not much is known about the qualitative properties of the optimizers.

According to the considerations above, we will assume in the whole section that 
\begin{itemize}
\item[$\Htrois$]\color{black}We assume that $D$ is a large ball containing $\B^*$, that $g$ is a non-increasing, radially symmetric and non-negative function in $L^2(D)$ and that $f$ is $\mathscr C^2\cap W^{1,\infty}$.\color{black}\end{itemize}
Notice that the analysis to follow can be generalized to sign-changing $g$. Here, this assumption allows us to avoid distinguishing between the cases where the signs of normal derivatives on $\mathbb S^*$ are positive or negative.
For the sake of simplicity, for every $\rho\geq0$, we will call $u_\rho$ the solution of the PDE
\begin{equation}
\left\{\begin{array}{ll}
-\Delta u_\rho+\rho f(u_\rho)=g & \text{ in }\B^*\\
u_\rho\in W^{1,2}_0(\B^*) & \text{ on }\partial \B^*=\mathbb S^*.
\end{array}\right.\end{equation}

Proving a full stationarity result\footnote{ in other words, proving that, for any $\rho\leq \rho^*$, $\B^*$ is the unique minimizer of $J_\rho$ in $\mathcal O_m$} is too intricate to tackle, since we do not know the minimizers topology. 
Hereafter, we investigate the local stability of the ball $\mathbb B^*$: we will prove that the ball is always a critical point, and show that we obtain different stability results, related to the non-negativity of  the second shape derivative of the Lagrangian, depending on $f$ and $g$.

To compute the first and second order shape derivatives, it is convenient to consider vector fields $V\in W^{3,\infty}(\color{black}\R^2\color{black},\R^2)$ and to introduce, for a given admissible vector field $V$ (i.e such that, for $t$ small enough, $(\operatorname{Id}+tV)\B^*\in \mathcal{O}_m$), the mapping
$$f_V:t\mapsto J_\rho \left((\operatorname{Id}+tV)\B^*\right).$$
The first (resp. second) order shape derivative of $J_\rho$ in the direction $V$ is defined as 
$$J_\rho'(\B^*)[V]:=f_V'(0)\, , \text{(resp. $J_\rho''(\B^*)[V,V]:=f_V''(0)$)}.$$ 
To enforce the volume constraint $|\O|=m$, we work with the  unconstrained functional 
$$
\mathcal L_{\Lambda_\rho}:\O\mapsto J_\rho(\O)-\Lambda_\rho \left(\operatorname{Vol}(\O)-m\right),
$$ 
where $\operatorname{Vol}$ denotes the Lebesgue measure in $\R^2$ and $\Lambda_\rho$ denotes a Lagrange multiplier associated with the volume constraint. Recall that, for every domain $\O$ with a $\mathscr C^2$ boundary and every vector field $V\in W^{3,\infty}(\R^2,\R^2)$, we have
$$
\operatorname{Vol}'(\O)[V]=\int_{\partial \O}V\cdot\nu\quad \text{and}\quad \operatorname{Vol}''(\O)[V,V]=\int_{\partial \O}H(V\cdot\nu)^2,
$$ 
where $H$ stands for the mean curvature of $\partial\O$. The local first and second order \color{black}necessary \color{black}  optimality conditions for Problem \eqref{minJ} read as follow:
$$
\left.\begin{array}{r}
\mathcal L_{\Lambda_\rho}'(\O)[V]=0\\ 
\mathcal L_{\Lambda_\rho}''(\O)[V,V]\geq 0 
\end{array}\right\}\text{ for every $V\in W^{3,\infty}(\R^2,\R^2)$ such that }\int_{\mathbb S^*} V\cdot \nu=0.
$$
For further informations about shape derivatives, we refer for instance to \cite[Chapitre 5]{henrot-pierre}. 
Let us state the main result of this section. In what follows, $\rho$  is chosen small enough so that Equation \eqref{eq:u} has a unique solution.

\begin{theorem}\label{Th:Shape}
Let $f$ and $g$ satisfying the assumption $\Htrois$. Let $V\in W^{3,\infty}(\R^2,\R^2)$ denote a vector field such that $\int_{\mathbb S^*} V\cdot \nu=0$.
\begin{enumerate}
\item \textsf{(Shape criticality)} $\B^*$ is a critical shape, in other words $J_\rho'(\B^*)[V]=0$.
\item \textsf{(Shape stability)} Assume that 
\begin{equation}\label{Eq:Hyp}
\color{black}2\color{black}\pi R^2 g(R)\leq \int_{\B^*} g\quad \text{and}\quad 0<\int_{\B^*} g,
\end{equation} 
where $R$ denotes the radius of the ball $\B^*$.
Let $\Lambda_\rho$ be the Lagrange multiplier associated with the volume constraint. There exists $\overline \rho>0$ and $C>0$ such that, for any $\rho\leq \overline \rho$, \begin{equation}\label{Eq:Quanti}
(J_\rho-\Lambda_\rho \operatorname{Vol})''(\B^*)[V,V] \geq C \Vert  V\cdot \nu \Vert^2 _{\color{black}H^\frac12(\O)\color{black}}.\end{equation}
\item \textsf{(Shape instability)} Assume that $g$ is the constant function equal to $1$ and that $f$ is a  non-negative function such that $f'<-1$ on $\left[0,2\Vert u_0\color{black}\Vert_\infty\color{black}\right)$, where $u_0$ is the solution of \eqref{eq:u} with $\rho=0$ and $\O=\mathbb B^*$.
Then, the second order optimality conditions are not fulfilled on $\B^*$: there exists $\overline \rho>0$ and $\hat V\in W^{3,\infty}(\R^2,\R^2)$ such that $\int_{\mathbb S^*} \hat V\cdot \nu=0$ and, for any $\rho\leq \overline \rho$, 
$$
(J_\rho-\Lambda_\rho \operatorname{Vol})''(\B^*)[\hat V,\hat V] <0.
$$
\end{enumerate}
\end{theorem}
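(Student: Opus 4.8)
The plan is to treat the three assertions of Theorem~\ref{Th:Shape} as successive refinements of a single perturbative computation around $\rho=0$, where everything is explicit on the ball thanks to radial symmetry. First I would establish the $\rho=0$ baseline. For $\rho=0$ the functional $J_0$ is the classical Dirichlet energy, $u_0$ is the torsion-like function solving $-\Delta u_0=g$ in $\B^*$ with $u_0\in W^{1,2}_0(\B^*)$, and since $g$ is radial so is $u_0$; in particular $\partial_\nu u_0$ is constant on $\mathbb S^*$. I would recall (or re-derive via the Hadamard formula and the optimality of the ball for the linear problem) that $J_0'(\B^*)[V]=-\tfrac12\int_{\mathbb S^*}(\partial_\nu u_0)^2\,V\cdot\nu$, which vanishes when $\int_{\mathbb S^*}V\cdot\nu=0$ because $\partial_\nu u_0$ is constant; and I would write down the known second-order form $(J_0-\Lambda_0\operatorname{Vol})''(\B^*)[V,V]$ as a boundary integral involving the normal derivative of $u_0$, the normal derivative of the solution $u_0'$ of the adjoint/linearized problem with Dirichlet data $-(V\cdot\nu)\partial_\nu u_0$ on $\mathbb S^*$, and curvature terms. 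Decomposing $V\cdot\nu$ in a Fourier series $\sum_{k}(a_k\cos k\theta+b_k\sin k\theta)$ on $\mathbb S^*$ (with $a_0=0$ from the constraint) diagonalizes this quadratic form into a sum over $k\ge1$ of $\mu_k\,(a_k^2+b_k^2)$, where each $\mu_k$ is an explicit function of $R$, $g(R)$, $\int_{\B^*}g$ and the Bessel/power-law profile of the harmonic extension of $\cos k\theta$.

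The second step is the criticality assertion (1): I would show $J_\rho'(\B^*)[V]=0$ for all $\rho$, not merely at $\rho=0$. Here the point is that the shape gradient of $J_\rho$ at any sufficiently smooth domain is again a boundary density of the form $c_\rho(x)\,V\cdot\nu$ with $c_\rho$ depending only on traces of $u_{\rho,\O}$ and of an adjoint state $p_{\rho,\O}$ on $\partial\O$; because the criterion is ``non-energetic'' one must genuinely introduce the adjoint state $p$ solving the linearization $-\Delta p+\rho f'(u_\rho)p=-\Delta u_\rho$ (equivalently $=g-\rho f(u_\rho)$) — I would compute the Eulerian derivative of $J_\rho$ using the material/shape derivative of $u_{\rho,\O}$ and eliminate the shape derivative of the state via this adjoint. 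On $\B^*$ radial symmetry forces both $u_\rho$ and $p_\rho$ to be radial, hence $c_\rho$ is constant on $\mathbb S^*$, and the volume-preserving condition $\int_{\mathbb S^*}V\cdot\nu=0$ kills the first variation; this simultaneously identifies the Lagrange multiplier $\Lambda_\rho$ as that constant value $c_\rho$.

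The third step is assertion (2), the quantitative stability: I would expand $(J_\rho-\Lambda_\rho\operatorname{Vol})''(\B^*)[V,V]=(J_0-\Lambda_0\operatorname{Vol})''(\B^*)[V,V]+\rho\,Q_\rho[V,V]$, where $Q_\rho$ is a quadratic form in $V\cdot\nu$ with coefficients controlled uniformly in $\rho\in[0,\overline\rho]$ by elliptic estimates on $u_\rho,p_\rho$ and their $\rho$-derivatives (using $f\in\mathscr C^2\cap W^{1,\infty}$ and Lemma~\ref{claim:borne} for $L^\infty$ bounds, then Schauder/$H^2$ bounds), so that $|Q_\rho[V,V]|\le C\|V\cdot\nu\|_{H^{1/2}(\mathbb S^*)}^2$. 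It then suffices to prove the coercivity of the limiting form: under hypothesis \eqref{Eq:Hyp}, namely $2\pi R^2 g(R)\le\int_{\B^*}g$ together with $\int_{\B^*}g>0$, each Fourier eigenvalue $\mu_k$ of $(J_0-\Lambda_0\operatorname{Vol})''(\B^*)$ is bounded below by $c\,k$ for $k\ge1$, hence the form dominates $\|V\cdot\nu\|_{H^{1/2}}^2$; absorbing the $\rho$ error for $\rho\le\overline\rho$ small gives \eqref{Eq:Quanti}. I expect the computation and sign analysis of the $\mu_k$ to be the technical heart here: one must express the harmonic extension of $\cos k\theta$ inside $\B^*$ (it is $r^k\cos k\theta$ up to normalization), compute its normal derivative $kR^{k-1}$, assemble the curvature term $H=1/R$ against the $\partial_\nu u_0$-contributions, and verify that the inequality $2\pi R^2 g(R)\le\int_{\B^*}g$ is exactly what makes $\mu_1\ge0$ (with the strict positivity $\int_{\B^*}g>0$ giving $\mu_k>0$ for $k\ge2$ and the needed lower bound), invoking the mean-value/monotonicity properties of the radial $g$.

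Finally for assertion (3), the instability, I would specialize to $g\equiv1$, where $u_0$ is the explicit parabola $u_0(x)=(R^2-|x|^2)/(2d)$ (with $d=2$), and exploit the sign hypothesis $f'<-1$ on $[0,2\|u_0\|_\infty)$: the $\rho$-linear correction $\rho\,Q_\rho$ to the second derivative has, in a suitable low Fourier mode $k$, a definite negative contribution because the adjoint/linearized operator $-\Delta+\rho f'(u_\rho)$ is being pushed toward non-coercivity (the condition $f'<-1$ is a borrowing-from-the-Poincaré-constant threshold on $\B^*$). Concretely I would pick $\hat V$ supported on a single high enough Fourier mode $k$ so that the $\mu_k$ of the limiting form is as small as we like relative to $k$ — or rather exploit that for $g\equiv1$ one has $2\pi R^2 g(R)=2\pi R^2 = d\int_{\B^*}g\cdot(\text{something})$ which violates \eqref{Eq:Hyp}, so already $\mu_k$ fails to be coercive — and then show the first-order-in-$\rho$ term is strictly negative on that mode, using the explicit $u_0$ and the bound $f'<-1$ to sign the relevant integral; choosing $\rho\le\overline\rho$ small makes the total strictly negative. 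The main obstacle across the whole proof is bookkeeping the adjoint state and its $\rho$-derivative carefully enough to get the boundary-integral form of $J_\rho''$ with uniform-in-$\rho$ remainder estimates, and then matching the abstract spectral condition on the $\mu_k$ to the clean geometric inequality \eqref{Eq:Hyp}; I would do the former with the standard Hadamard-Lagrangian method (differentiating the state equation, testing against the adjoint) and the latter by the explicit radial computation sketched above.
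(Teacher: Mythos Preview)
Your plan for parts (1) and (2) is broadly the paper's own strategy: compute the shape gradient via an adjoint, observe that radial symmetry makes it constant on $\mathbb S^*$, diagonalise the second shape derivative of the Lagrangian in Fourier modes as $\sum_k \omega_{k,\rho}(\alpha_k^2+\beta_k^2)$, and reduce stability to positivity and linear-in-$k$ growth of the $\omega_{k,\rho}$. Two remarks. First, the adjoint you wrote down ($-\Delta p+\rho f'(u_\rho)p=g-\rho f(u_\rho)$) is not the one the paper uses: since only the term $-\rho\int_{\B^*} f(u_\rho)u'$ needs to be eliminated, the paper takes $p_\rho$ solving $-\Delta p_\rho+\rho f'(u_\rho)p_\rho=-\rho f(u_\rho)$ with $p_\rho=0$ on $\mathbb S^*$, which is $O(\rho)$ and simplifies the later perturbation analysis. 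Second, your claimed bound $|Q_\rho[V,V]|\le C\Vert V\cdot\nu\Vert_{H^{1/2}}^2$ uniformly in $\rho$ is precisely the delicate point: generic elliptic estimates do not give it, because you need the correction to $\omega_{k,\rho}$ controlled uniformly in $k$. The paper does not isolate a remainder $\rho Q_\rho$; instead it proves, via the maximum principle applied to the radial ODEs for the Fourier components $\psi_{k,\rho}$, that $\psi_{k,\rho}'(R)-\psi_{1,\rho}'(R)\ge0$ and $\psi_{k,\rho}'(R)\ge \ell_0 k$ (by comparison with the explicit subsolution $(r/R)^{k/\sqrt2}$), whence $\omega_{k,\rho}-\omega_{1,\rho}\ge -M\rho$ and $\omega_{k,\rho}\ge \ell_1 k-M\rho$ directly for all $k$.

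For part (3) there is a genuine gap in your plan. The destabilising vector field is not a high mode, and the mechanism is not ``$-\Delta+\rho f'(u_\rho)$ losing coercivity'' or a Poincar\'e-constant threshold. For $g\equiv1$ the ball is still a global minimiser at $\rho=0$; what happens is that the first Fourier mode is \emph{exactly degenerate}, $\omega_{1,0}=0$, by translation invariance (the explicit computation $\omega_{1,0}/(\pi R)=-\varphi_0'(R)\bigl(-\tfrac2R\varphi_0'(R)-g(R)\bigr)$ vanishes precisely when $g$ is constant), while $\omega_{k,0}>0$ for $k\ge2$. The paper therefore takes $\hat V\cdot\nu=\cos\theta$ and expands $\omega_{1,\rho}$ to first order in $\rho$: one obtains $\omega_{1,\rho}=\tfrac\rho4\bigl(w_1(1)+w_1'(1)\bigr)+O(\rho^2)$ for an explicit auxiliary function $w_1$ solving a second-order ODE on $(0,R)$. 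The hypothesis $f\ge0$ gives $w_1(1)=-\int_0^1 t f(\varphi_0)\,dt<0$, and the hypothesis $f'<-1$ on the range of $\varphi_0$ enters only to sign the source term $-\tfrac{r^2}2(f'(\varphi_0)+1)$ in that ODE, so that $w_1$ cannot have a negative interior minimum and hence $w_1'(1)<0$ as well. Neither high modes nor operator coercivity play any role; you need to identify the degenerate translation mode $k=1$ and carry out this one-step $\rho$-expansion on it.
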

\color{black}
\begin{remark}
The coercivity norm obtained in \eqref{Eq:Quanti} could also be obtained in the three-dimensional case, but we only present the proof in the two-dimensional case for the sake of readability. As will be clear throughout the proof, this estimate only relies on the careful use of comparison principles.
\end{remark}
\color{black}
 \begin{remark}
Let us comment on the strategy of proof. It is known that estimates of the kind \eqref{Eq:Quanti} can lead to local quantitative inequalities \cite{DambrineLamboley}. We first establish \eqref{Eq:Quanti} in the case $\rho=0$, and then  extend it to small parameters $\rho$ with the help of a perturbation argument.  
Assumptions of the type \eqref{Eq:Hyp} are fairly well-known, and  amount to requiring that $\B^*$ is a stable shape minimiser \cite{DambrinePierre,henrotpierre}. Finally, the instability result rests upon the following observation: if  $g=1$ and if $V$ is the vector field given by $V(r \cos(\theta),r\sin(\theta))=\cos(\theta)(r \cos(\theta),r\sin(\theta))$, then one has 
$$
(J_0''-\Lambda_0 \operatorname{Vol})''(\B^*)[V,V]=0
$$ 
while higher order modes are stable \cite{DambrinePierre,henrotpierre}. It therefore seems natural to consider such perturbations when dealing with  small parameters $\rho$.
\\It should also be noted that our proof uses a comparison principle, which shortens many otherwise lengthy computations.
\end{remark}
\color{black}
\begin{remark}
The $H^{1/2}$ coercivity norm obtained for the second order shape derivative of the Lagrangian in Estimate \eqref{Eq:Quanti} is the natural one in the framework of shape optimisation, see for instance \cite{DambrineLamboley}. We emphasise that in the case of the functional under scrutiny here, completely explicit computations are not available, but that we obtain this norm through a very careful analysis of the diagonalised shape hessian, using comparison principles.

Although this is not the primary focus of this article, we believe that, with this coercivity property at hand, one can apply the techniques and results of \cite{DambrineLamboley} to derive a local quantitative inequality at the ball. 
\end{remark}
\begin{remark}
The stability result is obtained in the two dimensional case, but could be obtained with the same techniques, provided higher integrability for $g$ holds; indeed, such regularity is needed in fine estimates, see Lemma \ref{Cl:Borne}.

The instability result can readily be extended to higher dimensions, as will follow from the proof which relies on explicit computations on shape derivatives.
\end{remark}
\color{black}

\section{Proof  of Theorem  \ref{theo:exist}}\label{sec:prooftheoexists}
\subsection{General  outline of the proof}\label{Outline}
The  proof of Theorem \ref{theo:exist} rests upon an adaptation of the standard existence result by Buttazzo-DalMaso (see either the original article \cite{ButtazzoDalMaso} or \cite[Thm 4.7.6]{henrot-pierre} for a proof), based on the notion of $\gamma$-convergence, that we recall below.

\begin{definition}For any quasi-open set $\Omega$, let $R_\Omega$ be the resolvent of the Laplace operator on $\Omega$. We say that a sequence of quasi-open sets $(\Omega_k)_{k\in \N}$  in $\mathcal O_m$ $\gamma$-converges to $\O\in \mathcal O_m$   if, for any $\ell \in \color{black}W^{-1,2}\color{black}(D)$, $\left(R_{\Omega_k}(\ell)\right)_{k\in \N}$ converges in $W^{1,2}_0(D)$ to $R_{\Omega}(\ell).$
\end{definition}

The aforementioned existence theorem reads as follows. 

\begin{theorem*}[Buttazzo-DalMaso]
Let $J:\mathcal O_m\rightarrow \R$ be a shape functional satisfying the two following assumptions:
\begin{enumerate}
\item \textsl{(monotonicity)} For every $\Omega_1,\Omega_2\in \mathcal O_m$, 
$\Omega_1\subseteq \Omega_2\Rightarrow J(\Omega_2)\leq J(\Omega_1).$
\item \textsl{($\gamma$-continuity)} $J$ is lover semi-continuous for the $\gamma$-convergence.  
\end{enumerate}
Then the shape optimization problem 
$$\underset{\Omega \in \mathcal O_m}\inf J(\Omega)$$ 
has a solution.
\end{theorem*}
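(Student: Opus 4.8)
The plan is to follow the classical relaxation-and-compactness scheme. Since the infimum over $\mathcal{O}_m$ need not be attained by a minimizing sequence of quasi-open sets (those may develop finer and finer holes), one enlarges the admissible class to the set $\mathcal M_0(D)$ of \emph{capacitary measures} on $D$ — nonnegative Borel measures, possibly $+\infty$-valued, charging no set of zero capacity — identifying a quasi-open set $\Omega$ with the measure $\infty_{D\setminus\Omega}$ (equal to $0$ on $\Omega$ and $+\infty$ on $D\setminus\Omega$, so that $\int_D u^2\,d\,\infty_{D\setminus\Omega}$ is $0$ when $u=0$ quasi-everywhere outside $\Omega$ and $+\infty$ otherwise). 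To $\mu\in\mathcal M_0(D)$ one attaches the resolvent $R_\mu$ of $-\Delta+\mu$ (i.e.\ $u=R_\mu(\ell)$ solves $-\Delta u+\mu u=\ell$ weakly in $W^{1,2}_0(D)\cap L^2(D,\mu)$), and $\gamma$-convergence is defined exactly as for sets: $\mu_k\xrightarrow{\gamma}\mu$ iff $R_{\mu_k}(\ell)\to R_\mu(\ell)$ in $W^{1,2}_0(D)$ for every $\ell\in W^{-1,2}(D)$. The crucial input is the Dal Maso--Mosco compactness theorem: $(\mathcal M_0(D),\gamma)$ is compact and metrizable. Hence, starting from a minimizing sequence $(\Omega_n)$ for $\inf_{\mathcal{O}_m}J$, the measures $\infty_{D\setminus\Omega_n}$ admit a subsequence (not relabelled) $\gamma$-converging to some $\mu\in\mathcal M_0(D)$.

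From $\mu$ I would manufacture a genuine candidate domain $\Omega^*:=\{w_\mu>0\}$, where $w_\mu:=R_\mu(\mathbbm 1_D)\geq0$ is the torsion function of $\mu$; it is quasi-open because $w_\mu\in W^{1,2}_0(D)$. Applying the definition of $\gamma$-convergence with $\ell=\mathbbm 1_D$ gives $w_{\Omega_n}\to w_\mu$ in $W^{1,2}_0(D)$, hence (along a further subsequence) a.e.; since $-\Delta w_{\Omega_n}=1$ on $\Omega_n$, the strong maximum principle yields $\{w_{\Omega_n}>0\}=\Omega_n$ up to a null set, so Fatou's lemma gives $|\Omega^*|=|\{w_\mu>0\}|\leq\liminf_n|\Omega_n|\leq m$; thus $\Omega^*\in\mathcal{O}_m$. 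It remains to prove $J(\Omega^*)\leq\inf_{\mathcal{O}_m}J$.

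This last step is where the monotonicity assumption is indispensable. One first checks that $w_\mu=0$ q.e.\ on $D\setminus\Omega^*$ — so $w_\mu\in W^{1,2}_0(\Omega^*)$ — and that on $\Omega^*$ it solves $-\Delta w_\mu=1-\mu w_\mu\leq1$; a comparison then yields $w_\mu\leq w_{\Omega^*}$, and more precisely $\mu\geq\infty_{D\setminus\Omega^*}$ in $\mathcal M_0(D)$ (a finite portion of $\mu$ on a positive-capacity subset of $D\setminus\Omega^*$ would force $w_\mu>0$ there), i.e.\ ``the relaxed domain carried by $\mu$ is contained in $\Omega^*$''. Let $\overline J$ be the $\gamma$-lower semicontinuous relaxation of $J$ on $\mathcal M_0(D)$ (set to $+\infty$ outside quasi-open sets); since $J$ is already $\gamma$-lsc, $\overline J(\infty_{D\setminus\Omega})=J(\Omega)$ on quasi-open sets, and — using the monotonicity of $J$ for inclusion — $\overline J$ is nondecreasing for the natural order of $\mathcal M_0(D)$. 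Hence $\mu\geq\infty_{D\setminus\Omega^*}$ gives $\overline J(\mu)\geq\overline J(\infty_{D\setminus\Omega^*})=J(\Omega^*)$, while $\gamma$-lower semicontinuity along $\infty_{D\setminus\Omega_n}\xrightarrow{\gamma}\mu$ gives $\overline J(\mu)\leq\liminf_n J(\Omega_n)$. Chaining,
$$J(\Omega^*)\leq\overline J(\mu)\leq\liminf_{n\to+\infty}J(\Omega_n)=\inf_{\mathcal{O}_m}J,$$
so $\Omega^*$ is a solution.

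The main obstacle is exactly this reconstruction of a set from the relaxed minimizer: a priori $\mu$ is only a capacitary measure, and without monotonicity it may be strictly ``diffuse'' (a homogenized optimal density rather than a domain), so no minimizing \emph{set} exists; monotonicity is precisely what lets $\mu$ be replaced by the quasi-open set $\Omega^*$ containing the domain of $\mu$, at no cost for the functional. The ancillary facts I would borrow from the literature (Bucur--Buttazzo, Dal Maso--Mosco, Henrot--Pierre) are: the compactness and metrizability of $(\mathcal M_0(D),\gamma)$; the $\gamma$-lower semicontinuity of $\mu\mapsto|\{w_\mu>0\}|$; the inequality $\mu\geq\infty_{D\setminus\{w_\mu>0\}}$; and the fact that the $\gamma$-relaxation of a monotone, $\gamma$-lsc set functional is a monotone, $\gamma$-lsc functional on $\mathcal M_0(D)$. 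The volume constraint needs no extra care: by monotonicity a minimizing sequence may be taken with $|\Omega_n|=m$, and $|\{w_\mu>0\}|\leq m$ then follows for free from the Fatou bound above.
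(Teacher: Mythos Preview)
The paper does not prove this theorem: it is quoted as a known tool, with references to the original article of Buttazzo--Dal Maso and to \cite[Thm 4.7.6]{henrot-pierre}. There is therefore no ``paper's own proof'' to compare your proposal against.

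That said, your sketch is the standard argument found in those references: relax to the class $\mathcal M_0(D)$ of capacitary measures, use Dal Maso--Mosco compactness to extract a $\gamma$-limit $\mu$ of a minimizing sequence, recover a candidate quasi-open set $\Omega^*=\{w_\mu>0\}$ via the torsion function, check the volume constraint by Fatou, and finally exploit monotonicity to pass from the relaxed minimizer $\mu$ to the set $\Omega^*$ without increasing the functional. The outline is correct; the only places where one should be slightly more careful than you indicate are the identification $\{w_{\Omega_n}>0\}=\Omega_n$ (which holds quasi-everywhere for quasi-open sets, not merely a.e., and uses that $w_\Omega>0$ q.e.\ on $\Omega$) and the claim that the $\gamma$-relaxed functional inherits monotonicity on $\mathcal M_0(D)$, which requires the density of quasi-open sets in $\mathcal M_0(D)$ for the $\gamma$-topology. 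Both facts are classical and are precisely what the cited references supply.
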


As is customary when using this result, the lower semi-continuity for the $\gamma$-convergence is valid regardless of any sign assumptions on $g$ or of any additional hypothesis on $f$. This is the content of the next result, whose proof is standard and thus, postponed to Appendix~\ref{Annexe:preuveSemicontinuite}.

\begin{proposition}\label{Semicontinuite}
Let $f \in W^{1,\infty}(\R)$ and $\rho \geq 0$. The functional $J_\rho$ is \color{black}continuous \color{black} for the $\gamma$-convergence.
\end{proposition}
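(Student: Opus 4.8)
The plan is to prove Proposition~\ref{Semicontinuite}, i.e. that $J_\rho$ is continuous for the $\gamma$-convergence. Recall that $J_\rho(\Omega) = \frac12\int_\Omega |\nabla u_{\rho,\Omega}|^2 - \int_\Omega g u_{\rho,\Omega}$, where $u_{\rho,\Omega}$ solves \eqref{eq:u}. The natural strategy is: \emph{first} show that if $\Omega_k \xrightarrow{\gamma} \Omega$ then the solutions $u_{\rho,\Omega_k}$ converge strongly in $W^{1,2}_0(D)$ to $u_{\rho,\Omega}$ (extending everything by zero outside the respective quasi-open sets), and \emph{then} pass to the limit in the expression for $J_\rho$, rewriting it in a form manifestly continuous under $W^{1,2}$-convergence of the states.

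First I would reduce the PDE convergence to the linear theory. By Lemma~\ref{lem:existFixPt}, for $\rho < \underline\rho$ the map $T$ is a contraction uniformly in $\Omega \in \mathcal{O}_m$ (Lipschitz constant $\le \rho\|f\|_{W^{1,\infty}}/\lambda_1(D) < 1$), so $u_{\rho,\Omega}$ is the unique fixed point, obtained as the $W^{1,2}_0$-limit of the Picard iterates $u^{(j+1)}_\Omega = T_\Omega(u^{(j)}_\Omega)$ starting from $u^{(0)}=0$, with geometric convergence \emph{uniform in $\Omega$}. Each iterate is obtained by solving a \emph{linear} Dirichlet problem $-\Delta u^{(j+1)} = g - \rho f(u^{(j)})$ on $\Omega$, i.e. $u^{(j+1)}_\Omega = R_\Omega\big(g - \rho f(u^{(j)}_\Omega)\big)$. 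The key step is then an induction on $j$: assuming $u^{(j)}_{\Omega_k} \to u^{(j)}_\Omega$ in $W^{1,2}_0(D)$, the right-hand sides $g - \rho f(u^{(j)}_{\Omega_k}) \to g - \rho f(u^{(j)}_\Omega)$ strongly in $L^2(D)$ (hence in $W^{-1,2}(D)$) since $f$ is Lipschitz; combining this strong convergence of the source with the definition of $\gamma$-convergence (convergence of resolvents $R_{\Omega_k} \to R_\Omega$) and a standard equicontinuity/uniform-boundedness argument for the resolvents — $\|R_{\Omega_k}\|_{W^{-1,2}(D)\to W^{1,2}_0(D)} \le 1/\sqrt{\lambda_1(D)}$ — yields $u^{(j+1)}_{\Omega_k} = R_{\Omega_k}(g-\rho f(u^{(j)}_{\Omega_k})) \to R_\Omega(g-\rho f(u^{(j)}_\Omega)) = u^{(j+1)}_\Omega$ in $W^{1,2}_0(D)$. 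A diagonal/triangle-inequality argument using the uniform geometric rate of the Picard iteration then upgrades this to $u_{\rho,\Omega_k} \to u_{\rho,\Omega}$ in $W^{1,2}_0(D)$.

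Once the strong state convergence is in hand, I would pass to the limit in $J_\rho$. Extending $u_{\rho,\Omega}$ by zero, one has $\int_\Omega |\nabla u_{\rho,\Omega}|^2 = \|\nabla u_{\rho,\Omega}\|_{L^2(D)}^2$ and $\int_\Omega g u_{\rho,\Omega} = \int_D g u_{\rho,\Omega}$, both of which are continuous with respect to $W^{1,2}_0(D)$-convergence of $u_{\rho,\Omega}$ (the first because the norm is continuous under strong convergence, the second because $g \in L^2(D)$ and $u_{\rho,\Omega_k} \to u_{\rho,\Omega}$ in $L^2(D)$ by Sobolev embedding). Hence $J_\rho(\Omega_k) \to J_\rho(\Omega)$, which is the claimed $\gamma$-continuity (and in particular lower semi-continuity, the property actually needed for Buttazzo--DalMaso).

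The main obstacle is the uniformity in $\Omega$ needed to make the induction-plus-limit argument work cleanly: one must ensure the geometric convergence rate of the Picard iterates is uniform over $\mathcal{O}_m$ (which follows from the uniform contraction bound $\rho\|f\|_{W^{1,\infty}}/\lambda_1(D)$ via monotonicity of $\lambda_1$), and one must handle the interplay between the $j \to \infty$ limit of the iterates and the $k \to \infty$ limit of the domains. This is a standard $\varepsilon/3$ argument — bound $\|u_{\rho,\Omega_k} - u_{\rho,\Omega}\|$ by $\|u_{\rho,\Omega_k} - u^{(j)}_{\Omega_k}\| + \|u^{(j)}_{\Omega_k} - u^{(j)}_\Omega\| + \|u^{(j)}_\Omega - u_{\rho,\Omega}\|$, choose $j$ large using the uniform rate to kill the first and third terms, then let $k \to \infty$ — but it is the only nontrivial point. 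Everything else reduces to the classical fact that $\gamma$-convergence is precisely designed to make linear Dirichlet states converge, together with the Lipschitz continuity of $f$.
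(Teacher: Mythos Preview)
Your argument is correct, but it takes a genuinely different route from the paper's. The paper proceeds by \emph{compactness}: it first shows that the sequence of states $(u_k)_k$ is bounded in $W^{1,2}_0(D)$ (energy estimate), extracts a subsequence converging weakly in $W^{1,2}_0(D)$ and strongly in $L^2(D)$, uses the Lipschitz property of $f$ to get $f(u_k)\to f(u)$ in $L^2(D)$, and only then invokes $\gamma$-convergence (combined with the strong convergence of the right-hand side) to upgrade to strong $W^{1,2}_0$-convergence and to identify the limit as the unique solution on $\Omega$. You instead exploit the contraction structure of Lemma~\ref{lem:existFixPt} directly, propagating $\gamma$-continuity through the Picard iterates by induction and closing with a uniform $\varepsilon/3$ argument.

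Both approaches land on the same intermediate statement (strong $W^{1,2}_0(D)$-convergence of the states) and conclude identically. The paper's route is shorter and more in line with the standard toolbox for $\gamma$-convergence: compactness does the heavy lifting, and the nonlinear term is handled as a perturbation of the source. Your route is more constructive, avoids any subsequence extraction, and makes the dependence on the uniform contraction constant $\rho\Vert f\Vert_{W^{1,\infty}}/\lambda_1(D)$ fully explicit; in particular it would yield a quantitative modulus of continuity if one ever needed it. A minor remark: your stated operator bound $\Vert R_{\Omega_k}\Vert_{W^{-1,2}(D)\to W^{1,2}_0(D)}\le 1/\sqrt{\lambda_1(D)}$ depends on the normalization of the $W^{1,2}_0$-norm; with the homogeneous norm one simply gets $\le 1$. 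Either way the equicontinuity you need is immediate, so this does not affect the validity of the argument.
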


It remains hence to investigate the monotonicity of $J_\rho$. Our approach uses a relaxed version of $J_\rho$, namely the functional $\hat J_{M,\rho}$ defined by \eqref{defHatJrho}. More precisely, we will prove under suitable assumptions that
\begin{equation}\label{MonotonieRelaxee}
\forall M\geq 0\, , \forall a_1,a_2\in \widehat{\mathcal O_m}\, , a_1\leq a_2\Longrightarrow \hat J_{M,\rho}(a_1)\geq \hat J_{M,\rho}(a_2).
\end{equation}
\color{black}
It now remains to pass to the limit in \eqref{MonotonieRelaxee} to obtain monotonicity of the functional $J_\rho$.

One could expect, for any $\O\in \mathcal O_m$, that choosing $a=\mathbbm{1}_\O$ and taking the limit $M\to \infty$ would give
$$\hat J_{M,\rho}(\mathbbm{1}_\O)\xrightarrow[M\to +\infty]{} J_\rho(\mathbbm{1}_\O).$$
This is not true in general, but it holds for sets $\O$ that are quasi-stable, see \cite[Chapitre 4]{henrot-pierre}; we recall that a set $\O$ is said to be quasi-stable if, for any $w\in W^{1,2}(D)$, the property ``$w=0$ almost everywhere on $D\backslash \O$'' is equivalent to the property ``$w=0$ quasi-everywhere on $D\backslash \Omega$''. We  underline the fact that, if $\Omega_1$ and $\Omega_2$ are two admissible sets that are equal almost everywhere but not quasi-everywhere, we expect the limits 
$\lim_{M\to \infty}J_{M,\rho}(\mathbbm{1}_{\O_1})$ and $\lim_{M\to \infty}J_{M,\rho}(\mathbbm{1}_{\O_2})$ to be equal. Our strategy is then to first use this relaxation to  prove that the functional $J_\rho$ is monotonous on the set of stable-quasi open sets and then to use the continuity of $J_\rho$ with respect to the $\gamma$-convergence to establish its monotonicity on $\mathcal O_m$.

\paragraph{Using the relaxation for stable quasi-open sets}

The following result, whose proof is postponed to Appendix \ref{append:prooflem:monot2} for the sake of clarity, allows us to make the link between $ \hat J_{M,\rho}$ and  $J_\rho$.
\begin{lemma}\label{lem:monot2}
Let $\Omega \in\mathcal{O}_m$ be a stable quasi-open set. One has
$$
\lim_{M\to +\infty}\hat J_{M,\rho}(\mathbbm{1}_\Omega)=J_\rho(\Omega).
$$
\end{lemma}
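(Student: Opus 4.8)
The plan is to prove that $\hat J_{M,\rho}(\mathbbm 1_\O)\to J_\rho(\O)$ as $M\to+\infty$ for stable quasi-open $\O$ by establishing the convergence of the relaxed states $u_M:=u_{M,\rho,\mathbbm 1_\O}$ towards the state $u_{\rho,\O}$ in a strong enough topology, and then passing to the limit term by term in \eqref{defHatJrho}. The central quantity to control is the penalization term $\frac{M}{2}\int_D (1-\mathbbm 1_\O)u_M^2 = \frac M2\int_{D\setminus\O}u_M^2$, which we expect to vanish in the limit; establishing this is exactly where quasi-stability of $\O$ is used.

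First I would derive uniform a priori bounds. Testing the variational formulation of \eqref{eq:ua} with $a=\mathbbm 1_\O$ against $u_M$ itself gives
$$
\int_D |\nabla u_M|^2 + M\int_{D\setminus\O}u_M^2 + \rho\int_D f(u_M)u_M = \int_D g u_M.
$$
Using $f\in W^{1,\infty}$ (so $|f(s)s|\le |f(0)||s| + \|f'\|_\infty s^2$), the $L^\infty$ bound $\|u_M\|_\infty\le N_{m,g}$ from Lemma~\ref{claim:borne}, and Poincaré's inequality on $D$, for $\rho<\underline\rho$ this yields a bound on $\|u_M\|_{W^{1,2}_0(D)}$ and, crucially, $M\int_{D\setminus\O}u_M^2 \le C$ uniformly in $M$. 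Hence $\int_{D\setminus\O}u_M^2\le C/M\to 0$. Up to a subsequence, $u_M\rightharpoonup u_*$ weakly in $W^{1,2}_0(D)$ and strongly in $L^2(D)$ (Rellich), with $u_*=0$ a.e.\ on $D\setminus\O$ by the vanishing of the penalization term. Here the \emph{quasi-stability} of $\O$ is invoked: since $u_*\in W^{1,2}(D)$ vanishes a.e.\ on $D\setminus\O$, it vanishes quasi-everywhere on $D\setminus\O$, hence $u_*\in W^{1,2}_0(\O)$ (extended by zero).

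Next I would identify the limit. Passing to the limit in the weak formulation of \eqref{eq:ua} tested against an arbitrary $\varphi\in \mathscr D(\O)\subset W^{1,2}_0(D)$: the term $M\int_D(1-\mathbbm 1_\O)u_M\varphi = M\int_{D\setminus\O}u_M\varphi$ vanishes since $\varphi$ is supported in $\O$, while $\int_D\nabla u_M\cdot\nabla\varphi\to\int_\O\nabla u_*\cdot\nabla\varphi$, $\rho\int f(u_M)\varphi\to\rho\int_\O f(u_*)\varphi$ (dominated convergence, using $L^\infty$ bound and continuity of $f$), and $\int g u_M\to\int_\O g u_*$. Thus $u_*$ solves \eqref{eq:u} on $\O$, and by uniqueness (Lemma~\ref{lem:existFixPt}) $u_*=u_{\rho,\O}$; since the limit is independent of the subsequence, the whole family converges. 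To upgrade to strong $W^{1,2}$ convergence I would test the equation for $u_M$ against $u_M-u_{\rho,\O}$ (legitimate, as $u_{\rho,\O}\in W^{1,2}_0(\O)\subset W^{1,2}_0(D)$) and likewise the equation for $u_{\rho,\O}$ against the same, subtract, and use the already-established weak/strong-$L^2$ convergences plus the nonnegativity $M\int_{D\setminus\O}u_M(u_M-u_{\rho,\O}) = M\int_{D\setminus\O}u_M^2\ge 0$ to conclude $\int_D|\nabla(u_M-u_{\rho,\O})|^2\to 0$.

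Finally, passing to the limit in \eqref{defHatJrho}: $\frac12\int_D|\nabla u_M|^2\to\frac12\int_\O|\nabla u_{\rho,\O}|^2$ by strong $W^{1,2}$ convergence, $\int_D g u_M\to\int_\O g u_{\rho,\O}$ by $L^2$ convergence, and the middle term $\frac M2\int_{D\setminus\O}u_M^2$ — the delicate one — is handled by going back to the energy identity: testing \eqref{eq:ua} against $u_M$ gives an exact expression for $M\int_{D\setminus\O}u_M^2$ in terms of quantities that all converge, and comparing with the energy identity for $u_{\rho,\O}$ on $\O$ shows $M\int_{D\setminus\O}u_M^2\to 0$. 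Assembling the three limits gives $\hat J_{M,\rho}(\mathbbm 1_\O)\to J_\rho(\O)$. The main obstacle is precisely the passage $u_*\in W^{1,2}_0(\O)$: a priori the limit only vanishes a.e.\ outside $\O$, and without quasi-stability one cannot conclude it lies in $W^{1,2}_0(\O)$ (this is where two sets equal a.e.\ but not q.e.\ would give different limits); the hypothesis is used exactly to bridge this gap, and the rest is a careful but routine compactness and energy-identity argument.
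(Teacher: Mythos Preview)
Your approach is correct and follows the same strategy as the paper: uniform $W^{1,2}_0(D)$ bounds from the energy identity, Rellich compactness, identification of the weak limit with $u_{\rho,\O}$ using quasi-stability (exactly where the hypothesis is used, as you note), and passage to the limit in the functional. One small imprecision: in your strong-convergence step you propose to test the equation for $u_{\rho,\O}$ against $u_M-u_{\rho,\O}$, but $u_M|_\O\notin W^{1,2}_0(\O)$ in general, so this is not a legitimate test function. The fix is immediate: test only the $u_M$-equation against $u_M-u_{\rho,\O}\in W^{1,2}_0(D)$ and combine with the weak convergence $\int_D\nabla u_M\cdot\nabla u_{\rho,\O}\to\int_D|\nabla u_{\rho,\O}|^2$ and weak lower semicontinuity of the Dirichlet integral to deduce both $\int_D|\nabla u_M|^2\to\int_D|\nabla u_{\rho,\O}|^2$ and $M\int_{D\setminus\O}u_M^2\to 0$.

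The paper takes a slightly shorter route for the last step: instead of proving strong $W^{1,2}$ convergence and the vanishing of the penalty separately, it substitutes the energy identity directly into $\hat J_{M,\rho}$ to obtain
\[
\hat J_{M,\rho}(\mathbbm 1_\O)=-\tfrac12\int_D g\,u_M-\tfrac\rho2\int_D u_M f(u_M),
\]
in which every term passes to the limit under strong $L^2$ convergence alone. This buys brevity; your route establishes a bit more (strong $W^{1,2}$ convergence and $M\int_{D\setminus\O}u_M^2\to 0$), which is not needed for the lemma but is good to know.
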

Setting then $a_1=\mathbbm{1}_{\Omega_1}\, , a_2=\mathbbm{1}_{\Omega_2}$, 
and passing to the limit in \eqref{MonotonieRelaxee} as $M\to \infty$ gives the monotonicity  of $J_\rho$ on the set 
$$\mathcal O_{m,s}:=\left\{O\in \mathcal O_m\,, \O\text{ is stable}\right\}.$$

\paragraph{Passing from stable quasi-open sets to $\mathcal O_m$} 
The monotonicity of $J_\rho$ on $\mathcal O_m$ is established using the following Lemma, whose proof is postponed to Appendix \ref{Append:Ajout}:
\begin{lemma}\label{Le:Ajout}
If $J_\rho$ is monotonous on $\mathcal O_{m ,s}$, then it is monotonous on $\mathcal O_{m}.$
\end{lemma}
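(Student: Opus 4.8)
\textbf{Proof plan for Lemma \ref{Le:Ajout}.}

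The plan is to use the $\gamma$-continuity of $J_\rho$ (Proposition \ref{Semicontinuite}) to approximate an arbitrary pair of nested sets in $\mathcal O_m$ by nested pairs of stable quasi-open sets, and then pass to the limit in the monotonicity inequality. The key point is that the set of stable (quasi-open) sets is dense in $\mathcal O_m$ for the $\gamma$-convergence: every quasi-open set $\O$ admits a sequence of stable quasi-open sets $\gamma$-converging to it. In fact one has the cleaner statement that for any quasi-open $\O$ there is a \emph{largest} stable set contained in $\O$ (up to quasi-everywhere equivalence), often denoted $\O^{\mathrm{reg}}$ or obtained via the fine support of the associated capacitary measure, and that $\O$ and its stabilization are $\gamma$-equivalent, i.e. $R_\O = R_{\O^{\mathrm{reg}}}$; this is precisely the content of the discussion in \cite[Chapitre 4]{henrot-pierre}. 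Since $J_\rho(\O)$ depends only on $u_{\rho,\O} = R_\O(\cdots)$-type quantities (through the PDE \eqref{eq:u}), $J_\rho$ is in fact constant on $\gamma$-equivalence classes, so $J_\rho(\O) = J_\rho(\O^{\mathrm{reg}})$ for every $\O \in \mathcal O_m$.

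Granting this, here are the steps I would carry out. First, let $\Omega_1 \subseteq \Omega_2$ be two sets in $\mathcal O_m$; I want to show $J_\rho(\Omega_2) \leq J_\rho(\Omega_1)$. Second, I would replace each $\Omega_i$ by a stabilizing sequence: more carefully, one wants nested approximations, so I would take $\Omega_1^k \subseteq \Omega_2^k$ stable quasi-open sets in $\mathcal O_m$ with $\Omega_i^k \to \Omega_i$ in the $\gamma$-sense. One clean way to preserve nesting is to first stabilize $\Omega_2$ to get a stable $\widetilde \Omega_2 \supseteq$ (a $\gamma$-equivalent copy of) $\Omega_2$, or to work with $\Omega_1 \subseteq \Omega_2$ and use that $\Omega_1^{\mathrm{reg}} \subseteq \Omega_2^{\mathrm{reg}}$ when the stabilization operation is monotone with respect to inclusion — this monotonicity of $\O \mapsto \O^{\mathrm{reg}}$ is the technical fact I would lean on. Third, apply the monotonicity on $\mathcal O_{m,s}$ (the hypothesis of the lemma) to the stable pair: $J_\rho(\Omega_2^{\mathrm{reg}}) \leq J_\rho(\Omega_1^{\mathrm{reg}})$. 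Fourth, use $J_\rho(\Omega_i) = J_\rho(\Omega_i^{\mathrm{reg}})$ (equality on $\gamma$-classes, from Proposition \ref{Semicontinuite} applied in both directions, since $J_\rho$ is $\gamma$-continuous and $R_{\Omega_i} = R_{\Omega_i^{\mathrm{reg}}}$) to conclude $J_\rho(\Omega_2) \leq J_\rho(\Omega_1)$, which is the desired monotonicity on $\mathcal O_m$.

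The main obstacle is the bookkeeping around stabilization and nesting: one must be sure that (a) every quasi-open set is $\gamma$-equivalent to a stable one, (b) this stabilization can be taken monotone under inclusion so that $\Omega_1 \subseteq \Omega_2$ survives the passage to stable sets, and (c) $J_\rho$ genuinely only sees the $\gamma$-equivalence class — the last being immediate from Proposition \ref{Semicontinuite} together with the fact that the PDE \eqref{eq:u} (and the relaxed energy in \eqref{eq:ua}) is governed by the resolvent-type operator $R_\O$, so that if $\Omega^k \to \Omega$ and $\Omega \to \Omega$ both $\gamma$-converge then the functional values coincide. If a direct monotone stabilization is awkward to cite, the fallback is to approximate $\Omega_2$ from inside by open (hence regular) sets $\Omega_2^k \nearrow$ with $\Omega_1 \subseteq \Omega_2^k \subseteq \Omega_2$ and $\Omega_2^k \xrightarrow{\gamma} \Omega_2$ — using that open sets are stable — so only $\Omega_2$ needs to be handled and $\Omega_1$ can itself be stabilized separately, after which one chains the inequalities $J_\rho(\Omega_2) \leq J_\rho(\Omega_2^k) \leq J_\rho(\Omega_1^{\mathrm{reg}}) = J_\rho(\Omega_1)$ and lets $k \to \infty$.
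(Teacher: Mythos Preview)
Your main approach differs from the paper's and rests on three facts about a stabilization map $\Omega\mapsto\Omega^{\mathrm{reg}}$: (a) existence of a stable $\gamma$-equivalent representative, (b) monotonicity of this map under inclusion, and (c) constancy of $J_\rho$ on $\gamma$-equivalence classes. Fact (c) does follow from Proposition~\ref{Semicontinuite} (which gives $\gamma$-continuity, not merely lower semicontinuity). Facts (a) and especially (b) are plausible---one natural candidate is $\Omega^{\mathrm{reg}}:=\{w_\Omega>0\}$ with $w_\Omega$ the torsion function, which is monotone under inclusion by the comparison principle---but showing that this representative is stable in the sense of the paper is not immediate, and you have neither proved nor precisely cited any of this. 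As written, the argument has a gap at exactly the point you yourself flag as ``the technical fact I would lean on.''

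Your fallback contains a genuine error: open sets are \emph{not} stable in general. For instance, a disk in $\R^2$ minus a line segment is open, yet its complement in $D$ has zero Lebesgue measure but positive capacity, so ``$w=0$ a.e.\ on $D\setminus\Omega$'' is vacuous while ``$w=0$ q.e.'' is not; hence ``open (hence regular)'' is false. In addition, the nesting $\Omega_1\subseteq\Omega_2^k\subseteq\Omega_2$ with $\Omega_2^k$ open cannot be arranged when $\Omega_1$ is merely quasi-open. The paper avoids both pitfalls by a two-step approximation: first extend monotonicity from $\mathcal O_{m,s}$ to \emph{open} sets by approximating each open set from within by an increasing sequence of \emph{smooth} open sets (smooth open sets \emph{are} stable; after replacing $\Omega_{2,k}$ by $\Omega_{1,k}\cup\Omega_{2,k}$ the inclusion is preserved and the sequences $\gamma$-converge, cf.\ \cite[Lemmas~2.3 and 2.6]{Russ2019}); then extend from open to arbitrary quasi-open sets by approximating each quasi-open set by open sets $\gamma$-converging to it (again via \cite{Russ2019}), taking unions to preserve nesting, and passing to the limit using Proposition~\ref{Semicontinuite}.
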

Combining Lemma \ref{Le:Ajout} with Lemma \ref{lem:monot2} and Equation \eqref{MonotonieRelaxee} then gives the required montonicity of the functional $J_\rho$.

\color{black}

\medskip

In the next sections, we will concentrate on showing the monotonicity property \eqref{MonotonieRelaxee}. 
To this aim, we will carefully analyze the so-called ``switching function'' (representing the gradient of the functional $ \hat J_{M,\rho}$) as the parameter $M$ is large enough. 

\subsection{Structure of the switching function}
It is notable that, in this section, we will not make any assumption on $g$ or $f$ \color{black} other than $f\in W^{1,\infty}$ and $g\in W^{-1,2}(D)$.  \color{black}
%
Let $M>0$. Considering the following relaxed version of Problem \eqref{minJ}
 \begin{equation}
 \inf_{a\in \widehat{\mathcal{O}}_m}\hat J_{M,\rho}(a),
 \end{equation}
 it is convenient to introduce the set of {\it admissible perturbations} in view of deriving first order optimality conditions.

\begin{definition}[tangent cone, see e.g. \cite{MR1367820}]
Let $a^*\in \widehat{\mathcal{O}}_m$ and $\mathcal{T}_{a^*}$ be the tangent cone to the set $\widehat{\mathcal{O}}_m$ at $a^*$. The cone $\mathcal{T}_{a^*}$ is the set of functions $h\in L^\infty(D)$ such that, for any sequence of positive real numbers $\varepsilon_n$ decreasing to $0$, there exists a sequence of functions $h_n\in L^\infty(D)$ converging to $h$ for the weak-star topology of $L^\infty(D)$ as $n\rightarrow +\infty$, and $a^*+\varepsilon_nh_n\in \widehat{\mathcal{O}}_m$ for every $n\in\N$.
\end{definition}
In what follows, for any $a\in \widehat{\mathcal{O}}_m$, any element $h$ of the tangent cone  $\mathcal{T}_{a}$ will be called an  {\it admissible direction}.

\begin{lemma}[Differential of $\hat J_{M,\rho}$]\label{lem:cionsOpt1}
Let $a\in \widehat{\mathcal{O}}_m$ and $h\in \mathcal{T}_{a}$. Let $v_{M,\rho,a}$ be the unique solution of
\begin{equation}\label{eq:va}
\left\{
\begin{array}{ll}
-\Delta v_{M,\rho,a}+M(1-a)v_{M,\rho,a}+\rho f'(u_{M,\rho,a})v_{M,\rho,a}=\rho f({u}_{M,a}) & \textrm{in }D\\
v_{M,\rho,a}\in W^{1,2}_0(D). & 
\end{array}
\right.
\end{equation}
Then, $\hat J_{M,\rho}$ is differentiable in the sense of Fr\'echet at $a$ in the direction $h$ and its differential reads
$\langle d\hat J_{M,\rho}(a),h\rangle =\int_D h\Psi_a$, where $\Psi_a$ is the so-called ``switching function'' defined by
$$
\Psi_a=-M\left( v_{M,\rho,a}+\frac{u_{M,\rho,a}}2\right)u_{M,\rho,a}.
$$
\end{lemma}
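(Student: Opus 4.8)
The plan is to differentiate the relaxed functional $\hat J_{M,\rho}$ directly from its explicit expression \eqref{defHatJrho}, using the chain rule together with the differentiability of the state map $a\mapsto u_{M,\rho,a}$, and then to eliminate the derivative of the state via an adjoint (here the adjoint problem turns out to be self-adjoint up to the linearized term, hence \eqref{eq:va}).

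First I would establish that the map $a\mapsto u_{M,\rho,a}$ is Fréchet-differentiable from $L^\infty(D,[0,1])$ into $W^{1,2}_0(D)$, with derivative $\dot u := \dot u_{M,\rho,a}[h]$ solving the linearized equation
$$
-\Delta \dot u + M(1-a)\dot u + \rho f'(u_{M,\rho,a})\dot u = M h\, u_{M,\rho,a}\quad\text{in }D,\qquad \dot u\in W^{1,2}_0(D).
$$
This is a routine consequence of the implicit function theorem: the nonlinear map $F(a,u) = -\Delta u + M(1-a)u + \rho f(u) - g$ is $\mathscr C^1$ (using $f\in W^{1,\infty}\cap \mathcal D^2$, or merely $f\in W^{1,\infty}$ for first-order differentiability), and its partial differential in $u$, namely $w\mapsto -\Delta w + M(1-a)w + \rho f'(u)w$, is an isomorphism from $W^{1,2}_0(D)$ onto $W^{-1,2}(D)$ for $\rho<\underline\rho$ by the same coercivity/fixed-point argument as in Lemma~\ref{lem:existFixPt} (the zero-order terms $M(1-a)\geq 0$ only help, and the $\rho f'$ term is a small perturbation controlled by $\rho\|f\|_{W^{1,\infty}}/\lambda_1(D)$). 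Along the way one also checks that $h\mapsto \dot u[h]$ is continuous, linear, and that the remainder is $o(\|h\|)$, using standard elliptic estimates.

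Next I would differentiate $\hat J_{M,\rho}(a)$ term by term. Writing $u=u_{M,\rho,a}$ and $\dot u=\dot u[h]$, and using the weak formulation of \eqref{eq:ua} tested against $\dot u$, one gets
$$
\langle d\hat J_{M,\rho}(a),h\rangle
= \int_D \nabla u\cdot\nabla\dot u + \frac{M}{2}\int_D(1-a)\bigl(2u\dot u\bigr) - \frac{M}{2}\int_D h\, u^2 - \int_D g\,\dot u .
$$
The combination $\int_D\nabla u\cdot\nabla\dot u + M\int_D(1-a)u\dot u - \int_D g\dot u$ equals, by the weak form of \eqref{eq:ua} tested with $\dot u$, exactly $-\rho\int_D f(u)\dot u$. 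So $\langle d\hat J_{M,\rho}(a),h\rangle = -\rho\int_D f(u)\dot u - \tfrac{M}{2}\int_D h\,u^2$. Now I introduce $v_{M,\rho,a}$ as the solution of \eqref{eq:va}; testing \eqref{eq:va} against $\dot u$ and testing the linearized equation for $\dot u$ against $v_{M,\rho,a}$, the bilinear forms match (both read $\int_D\nabla v\cdot\nabla\dot u + M\int_D(1-a)v\dot u + \rho\int_D f'(u)v\dot u$), which yields
$$
\rho\int_D f(u)\dot u = M\int_D h\, u\, v_{M,\rho,a}.
$$
Substituting gives $\langle d\hat J_{M,\rho}(a),h\rangle = -M\int_D h\bigl(u\,v_{M,\rho,a} + \tfrac12 u^2\bigr) = \int_D h\,\Psi_a$ with $\Psi_a = -M\bigl(v_{M,\rho,a}+\tfrac{u}{2}\bigr)u$, as claimed.

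The main obstacle I anticipate is purely the differentiability of the state map with the right uniformity: one must be careful that all constants (the norm of the inverse linearized operator, the elliptic regularity constants giving the $L^\infty$ and $\mathscr C^0$ bounds) are uniform in $a\in\widehat{\mathcal O}_m$ for fixed $M$, so that the remainder estimate genuinely closes — but this is exactly where the uniform bound $\|u_{M,\rho,a}\|_\infty\leq N_{m,g}$ of Lemma~\ref{claim:borne} (or simple energy estimates when $g\in W^{-1,2}$) and the domain monotonicity of $\lambda_1$ do the work. The rest is bookkeeping with test functions. Finally, since $h\in\mathcal T_a$ is only an admissible direction (one-sided), I would note that the computation above is valid for the Gâteaux/Fréchet differential along any $L^\infty$ direction and a fortiori gives the one-sided directional derivative $\lim_{\e\downarrow 0}\e^{-1}\bigl(\hat J_{M,\rho}(a+\e h_\e)-\hat J_{M,\rho}(a)\bigr)=\int_D h\Psi_a$ for admissible sequences $h_\e\rightharpoonup h$, using that $\Psi_a\in L^1(D)$ (indeed $\Psi_a\in L^\infty(D)$ by the $\mathscr C^0(\overline D)$ regularity of $u_{M,\rho,a}$ and $v_{M,\rho,a}$) so that $\int_D h_\e\Psi_a\to\int_D h\Psi_a$ by weak-$\star$ convergence.
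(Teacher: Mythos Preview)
Your proof is correct and follows essentially the same route as the paper: differentiate $\hat J_{M,\rho}$ term by term, use the weak form of \eqref{eq:ua} tested against $\dot u$ to reduce to $-\rho\int_D f(u)\dot u - \tfrac{M}{2}\int_D h\,u^2$, and then cross-test the linearized state equation and the adjoint equation \eqref{eq:va} to obtain $\rho\int_D f(u)\dot u = M\int_D h\,u\,v_{M,\rho,a}$. The paper simply declares the Fr\'echet-differentiability of $a\mapsto u_{M,\rho,a}$ to be standard rather than spelling out the implicit function theorem argument, and does not include your final remark on one-sided directions in $\mathcal T_a$, but otherwise the computations coincide.
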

\begin{proof}[Proof of Lemma \ref{lem:cionsOpt1}]
The Fr\'echet-differentiability of $\hat J_{M,\rho}$ and of the mapping $\mathcal{O}_m\ni a\mapsto u_{M,\rho,a}\in W^{1,2}_0(D)$ at $m^*$ is standard (see e.g. \cite[Chap. 5]{henrot-pierre}). Let us consider an admissible perturbation $h$ of $a$ and let $\dot{u}_{M,\rho,a}$ be the differential of $u_{M,\rho,a}$ at $a$ in direction $h$. One has
\begin{eqnarray*}
\langle d\hat J_{M,\rho}(a),h\rangle &=& \int_D\nabla u_{M,\rho,a}\cdot \nabla \dot{u}_{M,\rho,a}+M\int_D(1-a)u_{M,\rho,a}\dot{u}_{M,\rho,a}-\frac{M}{2}\int_D h u_{M,\rho,a}^2\\
&& -\langle g,\dot{u}_{M,\rho,a}\rangle_{\color{black}W^{-1,2}\color{black}(\Omega),W^{1,2}_0(\Omega)},
\end{eqnarray*}
where $\dot{u}_{M,\rho,a}$ solves the system
\begin{equation}\label{eq:dotua}
\left\{
\begin{array}{ll}
-\Delta \dot{u}_{M,\rho,a}+M(1-a)\dot{u}+\rho f'(u_{M,\rho,a})\dot{u}_{M,\rho,a}=Mh{u}_{M,a} & \textrm{in }D\\
\dot{u}_{M,\rho,a}\in W^{1,2}_0(D). &  
\end{array}
\right.
\end{equation}
Let us multiply the main equation of \eqref{eq:ua} by $\dot u_{M,\rho,a}$ and then integrate by parts. We get
$$
 \int_D\nabla u_{M,\rho,a}\cdot \nabla \dot{u}_{M,\rho,a}+M\int_D(1-a)u_{M,\rho,a}\dot{u}_{M,\rho,a}+\rho \int_D f(u_{M,\rho,a})\dot u_{M,\rho,a} = \langle g,\dot{u}_{M,\rho,a}\rangle_{\color{black}W^{-1,2}\color{black}(\Omega),W^{1,2}_0(\Omega)}
$$
and therefore,
$$
\langle d\hat J_{M,\rho}(a),h\rangle =-\frac{M}{2}\int_D h u_{M,\rho,a^*}^2-\rho \int_D f(u_{M,\rho,a^*})\dot u_{M,\rho,a^*} 
$$
Let us multiply the main equation of \eqref{eq:dotua} by $v_{M,\rho,a}$ and then integrate by parts. We get
$$
 \int_D\nabla v_{M,\rho,a}\cdot \nabla \dot{u}_{M,\rho,a}+M\int_D(1-a)v_{M,\rho,a}\dot{u}_{M,\rho,a}+\rho \int_D f'(u_{M,a})\dot u_{M,a} v_{M,\rho,a}=M\int_Dhu_{M,a}v_{M,\rho,a} .
$$
Similarly, multiplying the main equation of \eqref{eq:va} by $\dot u_{M,\rho,a}$ and then integrating by parts yields
$$
 \int_D\nabla v_{M,\rho,a}\cdot \nabla \dot{u}_{M,\rho,a}+M\int_D(1-a)v_{M,\rho,a}\dot{u}_{M,\rho,a}+\rho \int_D f'(u_{M,\rho,a})\dot u_{M,\rho,a} v_{M,\rho,a}=\rho \int_Df(u_{M,\rho,a})\dot u_{M,\rho,a} .
$$
Combining the two relations above leads to 
$$
\rho \int_Df(u_{M,\rho,a})\dot u_{M,\rho,a}=M\int_Dhu_{M,\rho,a}v_{M,\rho,a}.
$$
Plugging this relation into the expression of $\langle d\hat J_{M,\rho}(a),h\rangle $ above yields the expected conclusion.
\end{proof}

\subsection{Proof that \eqref{MonotonieRelaxee} holds true whenever $\rho$ is small enough}

Let us consider each set of assumptions separately.

\begin{center}
\large{\textsf{Existence under the first assumption: $g$ or $-g$ satisfies the assumption~$\Hun$.}}
\end{center}

According to the discussion carried out in Section \ref{Outline}, proving Theorem \ref{theo:exist} boils down  to proving monotonicity properties for the functional $\hat J_{M,\rho}$ whenever $\rho$ is small enough, which is the purpose of the next result.
\begin{lemma}\label{lem:monot}
Let $a_1$ and $a_2$ be two elements of $\widehat{\mathcal{O}}_m$ such that $a_1\leq a_2$ a.e. in $D$. 
If $g$ or $-g$ satisfies the assumption $\Hun$, then there exists $\rho_1=\rho_1(D,g_0,g_1,\Vert f\Vert_{W^{1,\infty}})>0$ such that
$$
\color{black} \forall M>0\,, \quad\color{black} \rho\in (0,\rho_1)\Rightarrow \hat J_{M,\rho}(a_1)\geq \hat J_{M,\rho}(a_2). 
$$
\end{lemma}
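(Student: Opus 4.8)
The plan is to establish the monotonicity of $\hat J_{M,\rho}$ by showing that the switching function $\Psi_a$ from Lemma \ref{lem:cionsOpt1} has a constant sign for $\rho$ small. Recall that $\langle d\hat J_{M,\rho}(a),h\rangle=\int_D h\,\Psi_a$ with $\Psi_a=-M\bigl(v_{M,\rho,a}+\tfrac{u_{M,\rho,a}}{2}\bigr)u_{M,\rho,a}$. Since $a_1\le a_2$, the path $t\mapsto a_t:=(1-t)a_1+ta_2$ stays in $\widehat{\mathcal O}_m$ and $\dot a_t=a_2-a_1\ge 0$ is an admissible direction, so by the fundamental theorem of calculus $\hat J_{M,\rho}(a_2)-\hat J_{M,\rho}(a_1)=\int_0^1\int_D (a_2-a_1)\Psi_{a_t}\,dt$. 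Hence it suffices to prove that $\Psi_{a}\le 0$ a.e. in $D$ for every $a\in\widehat{\mathcal O}_m$ and every $M>0$, provided $\rho<\rho_1$. Equivalently, writing $w_{M,\rho,a}:=v_{M,\rho,a}+\tfrac12 u_{M,\rho,a}$, we must show that $w_{M,\rho,a}$ and $u_{M,\rho,a}$ have the same sign a.e.

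The key observation is the asymptotic behaviour as $\rho\to 0$. When $\rho=0$, Equation \eqref{eq:va} gives $v_{M,0,a}=0$, so $w_{M,0,a}=\tfrac12 u_{M,0,a}$ and $\Psi_a=-\tfrac{M}{2}u_{M,0,a}^2\le 0$; moreover, if $g$ satisfies $\Hun$ (so $g\ge g_0>0$), the maximum principle applied to \eqref{eq:ua} at $\rho=0$ forces $u_{M,0,a}\ge 0$, in fact $u_{M,0,a}\ge c_0>0$ on compact subsets. The strategy is therefore perturbative: I would first derive, uniformly in $M>0$ and $a\in\widehat{\mathcal O}_m$, a quantitative lower bound of the form $u_{M,\rho,a}\ge \alpha$ for some explicit $\alpha>0$ when $\rho$ is small. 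This is where one must be careful: Lemma \ref{claim:borne} already gives a uniform upper bound $\|u_{M,\rho,a}\|_\infty\le N_{m,g}$, and for the lower bound one writes $-\Delta u_{M,\rho,a}+M(1-a)u_{M,\rho,a}=g-\rho f(u_{M,\rho,a})\ge g_0-\rho\|f\|_{W^{1,\infty}}(1+N_{m,g})\ge g_0/2>0$ once $\rho$ is small enough depending on $g_0,g_1,\|f\|_{W^{1,\infty}},D$; then comparison with the solution of $-\Delta z+M(1-a)z=g_0/2$ (which itself dominates the torsion-function-type minorant, using that adding the nonnegative zeroth-order term $M(1-a)z$ only decreases the solution — one compares instead against $-\Delta z = g_0/2$ after noting $u_{M,\rho,a}\ge 0$ first, then subtracting) yields $u_{M,\rho,a}\ge \alpha>0$ with $\alpha$ independent of $M$. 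Simultaneously, from \eqref{eq:va}, $v_{M,\rho,a}$ solves an equation with right-hand side $\rho f(u_{M,\rho,a})$, bounded by $\rho\|f\|_{W^{1,\infty}}(1+N_{m,g})$, and with a nonnegative zeroth-order coefficient $M(1-a)+\rho f'(u_{M,\rho,a})$ — the latter is $\ge M(1-a)-\rho\|f\|_{W^{1,\infty}}$, which need not be nonnegative, so one should treat $\rho f'(u)v$ as part of the source or use that $\|f'\|_\infty\rho$ is small. In any case an $L^\infty$ estimate $\|v_{M,\rho,a}\|_\infty\le C\rho$ with $C$ uniform in $M$ follows (again by comparison with the torsion function of $D$, since the zeroth order term, after controlling the $f'$ piece, is essentially nonnegative up to a term absorbed into $g$). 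Then $|v_{M,\rho,a}|\le C\rho\le \alpha/4$ for $\rho$ small, so $w_{M,\rho,a}=v_{M,\rho,a}+\tfrac12 u_{M,\rho,a}\ge \tfrac12\alpha-\tfrac14\alpha=\tfrac14\alpha>0$, whence $\Psi_a=-Mw_{M,\rho,a}u_{M,\rho,a}\le 0$ a.e. The case where $-g$ satisfies $\Hun$ is symmetric (replace $u$ by $-u$, observing the equation for $-u_{M,\rho,a}$ has nonlinearity $-f(-\cdot)$, also in $W^{1,\infty}$, and $-g\ge g_0>0$); one gets $u_{M,\rho,a}\le -\alpha<0$ and $w_{M,\rho,a}\le -\tfrac14\alpha<0$, so again $\Psi_a\le 0$.

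Putting it together: choosing $\rho_1=\rho_1(D,g_0,g_1,\|f\|_{W^{1,\infty}})$ so that all the smallness requirements above hold, we get $\Psi_{a_t}\le 0$ for all $t\in[0,1]$, all $M>0$, all $\rho\in(0,\rho_1)$, hence $\hat J_{M,\rho}(a_2)-\hat J_{M,\rho}(a_1)=\int_0^1\int_D(a_2-a_1)\Psi_{a_t}\,dt\le 0$, which is the claim. The main obstacle is obtaining the lower bound on $u_{M,\rho,a}$ and the $O(\rho)$ bound on $v_{M,\rho,a}$ \emph{uniformly in $M$}: the zeroth-order coefficient $M(1-a)$ blows up, so one cannot use energy estimates naively, and must rely systematically on the maximum principle / comparison with torsion functions, being slightly delicate about the sign of $\rho f'(u_{M,\rho,a})$ in the coefficient of $v$ — the cleanest fix is to note $\|\rho f'(u_{M,\rho,a})\|_\infty\le\rho\|f\|_{W^{1,\infty}}$ and, for $\rho$ small, absorb it by a fixed-point / Neumann-series argument identical in spirit to the one proving Lemma \ref{lem:existFixPt}, yielding the desired uniform bound on $v$. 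A secondary point to check is measurability/regularity so that "$\Psi_a\le 0$ a.e." combined with $a_2-a_1\ge 0$ a.e. legitimately gives the sign of the integral, which is immediate.
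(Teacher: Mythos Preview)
Your overall strategy---integrate the differential along the path $t\mapsto a_t$ and show $\Psi_a\le 0$ pointwise---matches the paper exactly (the paper uses the mean value theorem, you use the fundamental theorem of calculus, but these are equivalent here). The gap is in how you argue that $w_{M,\rho,a}:=v_{M,\rho,a}+\tfrac12 u_{M,\rho,a}\ge 0$.

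Your perturbative route requires a \emph{pointwise} lower bound $u_{M,\rho,a}\ge\alpha>0$ on $D$, uniformly in $M$ and $a$. This is impossible: $u_{M,\rho,a}\in W^{1,2}_0(D)$ vanishes on $\partial D$, so no such $\alpha$ exists. (Your own comparison confirms it: adding the zeroth-order term $M(1-a)$ \emph{decreases} the solution, so the subsolution you obtain from $-\Delta z+M(1-a)z=g_0/2$ tends to $0$ on $\{a<1\}$ as $M\to\infty$ and is zero on $\partial D$ regardless.) Consequently the step ``$|v|\le C\rho\le\alpha/4$ hence $w\ge \alpha/4>0$'' fails near $\partial D$ (and, for large $M$, on large portions of $\{a<1\}$), precisely where $u$ is small. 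To make a perturbative argument work you would need a \emph{ratio} estimate $|v_{M,\rho,a}|\le C\rho\,u_{M,\rho,a}$, i.e.\ a Hopf-type comparison, which is a different and substantially harder statement than the $L^\infty$ bound you sketch.

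The paper avoids this difficulty entirely: instead of separating $w$ into $v+\tfrac12 u$, it writes down the single elliptic equation satisfied by $U_{M,\rho,a}:=w_{M,\rho,a}$, namely
\[
-\Delta U+\bigl(M(1-a)+\rho f'(u)\bigr)U=\tfrac{\rho}{2}\bigl(f(u)+uf'(u)\bigr)+\tfrac{g}{2},
\]
and then chooses $\rho_1$ so that simultaneously (i) $\rho\|f'\|_\infty<\lambda_1(D)/2$, putting the zeroth-order coefficient above $-\lambda_1(D)$, and (ii) $\rho(\|f\|_\infty+C\|f'\|_\infty)<g_0$, making the right-hand side nonnegative. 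The generalized maximum principle then gives $U\ge 0$ directly, with no need for any quantitative lower bound on $u$. This is the missing idea in your argument.
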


\begin{proof}[Proof of Lemma \ref{lem:monot}]
Assume without loss of generality that $g_0>0$, the case $\color{black} 0\leq g_0\leq -g\leq g_1\color{black}$ being easily inferred by modifying all the signs in the proof below.
Then, one has
$$
-\Delta u_{M,\rho,a}+M(1-a)u_{M,\rho,a}=g-\rho f(u_{M,\rho,a})\geq 0\quad \text{in }D,
$$
whenever $\rho\in (0,g_0/\Vert f\Vert_\infty)$, and therefore, one has $u_{M,\rho,a}\geq 0$ by the comparison principle.

Similarly, notice that
$$
-\Delta u_{M,\rho,a}\leq g_1+\rho \Vert f\Vert_\infty\quad \text{in }D,
$$
which implies that  $u_{M,\rho,a}\leq ( g_1+\rho \Vert f\Vert_\infty) w_D$ were $w_D$ is the torsion function of $D$. By the classical Talenti's estimate of the torsion function \cite{Talenti1979}, we have
$\Vert w_D\Vert_\infty \leq \frac{1}{2d}\left(\frac{|D|}{\omega_d}\right)^{2/d}$ (where $\omega_d$ is the volume of the unit ball). Thus 
\begin{equation}\label{Torsion}\| u_{M,\rho,a}\|_\infty\leq ( g_1+\rho \Vert f\Vert_\infty) \frac{1}{2d}\left(\frac{|D|}{\omega_d}\right)^{2/d}:=C(g_0,\rho,\|f\|_\infty,D).\end{equation}

Setting $U_{M,\rho,a}=\frac{1}{2}u_{M,\rho,a}+ v_{M,\rho,a}$, elementary computations show that $U_{M,\rho,a}$ solves the problem
\begin{equation}\label{eq:U}
\left\{
\begin{array}{ll}
-\Delta U_{M,\rho,a}+\left(M(1-a)+\rho f'(u_{M,\rho,a})\right)U_{M,\rho,a}=\frac{\rho}{2}\left(f(u_{M,\rho,a})+u_{M,\rho,a}f'(u_{M,\rho,a})\right) 
+\frac{g}{2} & \textrm{in }D,\\
U_{M,\rho,a}=0 & \textrm{on }\partial D.
\end{array}
\right.
\end{equation}

\color{black} Before we conclude the proof of Lemma \ref{lem:monot}, we need the following intermediate result on the sign of $U_{M,\rho,a}$.\color{black}
\begin{lemma}\label{Cl:Pos}
Let us choose $\rho_1$ in such a way that
\begin{equation}\label{eq:RhoDelta}
\rho_1(\Vert f\Vert_\infty+C(g_0,\Vert f\Vert_\infty,D) \|f'\|_\infty )< g_0,
\quad \text{and}\quad  
\rho_1 \Vert f'\Vert_\infty\leq \frac{ \lambda_1(D)}2,
\end{equation}
where $\lambda_1(D)$ denotes the first eigenvalue of the Dirichlet-Laplacian operator on $D$ \color{black} and $C(g_0,\Vert f\Vert_\infty,D) $ is given by estimate \eqref{Torsion}. 
For every $\rho\in [0,\rho_1)$, $U_{M,\rho,a}$ is non-negative in $D$.
\end{lemma}
\begin{proof}[Proof of Lemma \ref{Cl:Pos}]
The result follows immediately from the generalized maximum principle  which claims that
if a function $v$ satisfies 
\begin{equation}\label{genmaxpr}
-\Delta v + a(\cdot) v  \geq 0 \quad \mbox{with $a(\cdot)>-\lambda_1(D)$} 
\end{equation}
and $v=0$ on $\partial D$, then $v\geq 0$ a.e. in $D$. \color{black} This is readily seen by multiplying the above inequality by the negative part $v_-$ of $v$ and integrating by part.
\color{black}Here we have chosen $\rho_1$ in such a way that
$$
M(1-a)+\rho f'(u_{M,\rho,a}) \geq -\lambda_1(D)
$$ 
and the right-hand side of \eqref{eq:U}
is non-negative which yields the result.
\end{proof}

Coming back to the proof of Lemma \ref{lem:monot}, consider $h=a_2-a_1$. According to the mean value theorem, there exists $\varepsilon\in (0,1)$ such that
$$
\hat J_{M,\rho}(a_2)-\hat J_{M,\rho}(a_1)=\langle d\hat J_{M,\rho}(a_1+\varepsilon h),h\rangle =-M\int_D hu_{M,a_1+\varepsilon h}U_{M,a_1+\varepsilon h}\leq 0,
$$
according to the combination of the analysis above with Lemma \ref{lem:cionsOpt1}. The expected conclusion follows.
\end{proof}

%

\begin{center}
\large{\textsf{Existence under the second assumption: $g$ is non-negative and the function $f$ satisfies the assumption~$\Hdeux$ or $g$ is non-positive and the function $-f$ satisfies the assumption~$\Hdeux$.}}
\end{center}

The main difference with the previous case is that $g$ might possibly be zero. Deriving the conclusion is therefore trickier and relies on a careful asymptotic analysis of the solution $u_{M,\rho,a}$ as $\rho\to 0$. 
\begin{proposition}\label{DA}
There exists $C=C(D,\Vert f\color{black}\Vert_\infty\color{black})>0$ such that, for any $M\in \R_+$, any $a\in \widehat{\mathcal{O}}_m$, there holds
\begin{equation}
\Vert u_{M,\rho,a}-u_{M,0,a}\Vert_{L^\infty(D)}\leq C\rho.
\end{equation}
\end{proposition}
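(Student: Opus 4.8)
The plan is to establish the $O(\rho)$ estimate by subtracting the two PDEs solved by $u_{M,\rho,a}$ and $u_{M,0,a}$, and then performing an energy estimate on the difference $w := u_{M,\rho,a} - u_{M,0,a}$. First I would write down the equation for $w$: since $u_{M,\rho,a}$ solves \eqref{eq:ua} and $u_{M,0,a}$ solves the same equation with $\rho = 0$, subtracting gives
\begin{equation*}
-\Delta w + M(1-a)w + \rho f(u_{M,\rho,a}) = 0 \quad \text{in } D, \qquad w \in W^{1,2}_0(D).
\end{equation*}
Testing this equation against $w$ itself and integrating by parts yields
\begin{equation*}
\int_D |\nabla w|^2 + M\int_D (1-a) w^2 = -\rho \int_D f(u_{M,\rho,a}) w.
\end{equation*}
Since $M \geq 0$ and $0 \leq a \leq 1$, the second term on the left is nonnegative, so $\int_D |\nabla w|^2 \leq \rho \|f\|_\infty \|w\|_{L^1(D)} \leq \rho \|f\|_\infty |D|^{1/2} \|w\|_{L^2(D)}$. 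By the Poincaré inequality on $D$ (with constant depending only on $D$), $\|w\|_{L^2(D)} \leq C_P(D) \|\nabla w\|_{L^2(D)}$, and combining the two bounds gives $\|\nabla w\|_{L^2(D)} \leq \rho \|f\|_\infty |D|^{1/2} C_P(D)$, hence $\|w\|_{W^{1,2}_0(D)} \leq C_1(D, \|f\|_\infty)\, \rho$.

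It then remains to upgrade this $W^{1,2}$ estimate to an $L^\infty$ estimate. The cleanest route is to observe that $w$ solves $-\Delta w = -M(1-a)w - \rho f(u_{M,\rho,a})$ with a right-hand side we must control. The term $\rho f(u_{M,\rho,a})$ is bounded in $L^\infty$ by $\rho \|f\|_\infty$, which is harmless. The term $M(1-a)w$ is more delicate because $M$ is unbounded; however, one can absorb it using the structure of the equation. Indeed, rewriting the equation for $w$ as
\begin{equation*}
-\Delta w + M(1-a)w = -\rho f(u_{M,\rho,a}),
\end{equation*}
this is precisely of the form \eqref{eq:ua} with the nonlinear term and $g$ replaced by the bounded source $-\rho f(u_{M,\rho,a})$ (and no nonlinearity). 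By the very same Talenti-type / $L^\infty$ argument underlying Lemma~\ref{claim:borne} and estimate \eqref{Torsion} — apply the maximum principle comparing $w$ with $\pm \rho \|f\|_\infty w_D$, where $w_D$ is the torsion function of $D$, noting that $M(1-a) \geq 0$ only helps — one gets $\|w\|_{L^\infty(D)} \leq \rho \|f\|_\infty \|w_D\|_\infty \leq \rho \|f\|_\infty \frac{1}{2d}\left(\frac{|D|}{\omega_d}\right)^{2/d} =: C(D, \|f\|_\infty)\, \rho$, with a constant independent of $M$, $a$ and $\rho$. This is exactly the claimed bound.

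The main obstacle, and the point that requires care, is ensuring the constant is uniform in $M$: a naive elliptic regularity estimate would produce constants blowing up with $M$. The key observation that removes this difficulty is that the zeroth-order coefficient $M(1-a)$ is nonnegative, so in the comparison-principle argument it can only push $w$ closer to zero; thus the torsion function of $D$ (corresponding to the case $M = 0$) provides a valid $M$-independent barrier. I would also need to check that $u_{M,0,a}$ is well-defined and lies in $W^{1,2}_0(D)$ (immediate, since for $\rho=0$ equation \eqref{eq:ua} is linear and coercive), and that $u_{M,\rho,a}$ exists, which is guaranteed for $\rho < \underline\rho$ by Lemma~\ref{lem:existFixPt} applied on $D$. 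Once these points are in place the proof is just the two energy estimates above chained together.
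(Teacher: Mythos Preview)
Your proof is correct and ultimately takes the same route as the paper: subtract the two equations to get $-\Delta w + M(1-a)w = -\rho f(u_{M,\rho,a})$, then compare with $\pm\rho\|f\|_\infty w_D$ via the maximum principle (the nonnegative potential $M(1-a)$ only helps), yielding $\|w\|_{L^\infty}\leq \rho\|f\|_\infty\|w_D\|_\infty$ with Talenti's bound on $\|w_D\|_\infty$. The initial $W^{1,2}$ energy estimate you carry out is superfluous, since the torsion-function comparison already gives the $L^\infty$ bound directly without passing through Sobolev norms; the paper simply omits that detour.
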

\begin{proof}
Let us set $z_\rho=u_{M,\rho,a}-u_{M,0,a}$ for any $\rho>0$.
A direct computation yields that $z_\rho$ satisfies
$$-\Delta z_\rho+M(1-a)z_\rho=-\rho f(u_{M,\rho,a}).$$
By comparison with the torsion function $w_D$ of $D$, this implies
$$\|z_\rho\|_\infty \leq \rho \|f\|_\infty \|w_D\|_\infty$$
and the result follows, with a constant $C$ explicit by Talenti's Theorem like in the proof
of Lemma~\ref{lem:monot}.
\end{proof}


\medskip

Let us consider the switching function $\Psi=-MU_{M,\rho,a}u_{M,\rho,a}$ where $u_{M,\rho,a}$ and $U_{M,\rho,a}$ respectively solve \eqref{eq:ua} and \eqref{eq:U}, and we will prove that both $u_{M,\rho,a}$ and $U_{M,\rho,a}$ are non-negative, so that one can conclude similarly to the previous case.

\begin{lemma}
The functions $u_{M,\rho,a}$ and $U_{M,\rho,a}$ \color{black}are \color{black} non-negative whenever $\rho$ is small enough.
\end{lemma}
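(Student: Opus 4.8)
We must show that, under assumption $\Hdeux$ (for $g\geq 0$), both $u_{M,\rho,a}$ and $U_{M,\rho,a}$ are non-negative on $D$ for $\rho$ small enough, uniformly in $M>0$ and $a\in\widehat{\mathcal O}_m$. The non-negativity of $u_{M,\rho,a}$ is the easier part: since $g\geq 0$ and $f(0)\leq 0$, I would argue directly with the comparison principle. Multiplying the equation \eqref{eq:ua} by the negative part $(u_{M,\rho,a})_-$ and integrating by parts, the term $\rho\int_D f(u_{M,\rho,a})(u_{M,\rho,a})_-$ can be handled by writing $f(u) = f(0) + (f(u)-f(0))$; on the set $\{u_{M,\rho,a}<0\}$ one has $f(u_{M,\rho,a})-f(0) = f'(\xi)u_{M,\rho,a}$ for some intermediate $\xi$, which is bounded by $\|f'\|_\infty |u_{M,\rho,a}|$, while $f(0)\leq 0$ contributes a favourable sign against $(u_{M,\rho,a})_- \geq 0$. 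Choosing $\rho\|f'\|_\infty < \lambda_1(D)$ then forces $(u_{M,\rho,a})_- = 0$. (Alternatively, since $x\mapsto xf(x)$ is non-decreasing on $[0,N_{m,g}+\delta]$ and $f(0)\le 0$, the cleaner route is simply: $-\Delta u_{M,\rho,a} + M(1-a)u_{M,\rho,a} + \rho f(u_{M,\rho,a}) = g \geq 0$, and since $0$ is a subsolution, the maximum principle for the operator $-\Delta + M(1-a) + \rho\frac{f(u)}{u}\mathbbm 1_{u\neq 0}$ — whose zeroth-order coefficient is controlled below by $-\lambda_1(D)$ for $\rho$ small — gives $u_{M,\rho,a}\geq 0$.)

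**Non-negativity of $U_{M,\rho,a}$.** Recall $U_{M,\rho,a}$ solves \eqref{eq:U} with right-hand side $\tfrac12 g + \tfrac\rho2\big(f(u_{M,\rho,a}) + u_{M,\rho,a}f'(u_{M,\rho,a})\big)$ and zeroth-order coefficient $M(1-a) + \rho f'(u_{M,\rho,a})$. As in Lemma~\ref{Cl:Pos}, if I choose $\rho_1$ so small that $\rho_1\|f'\|_\infty \leq \lambda_1(D)/2$, the zeroth-order coefficient exceeds $-\lambda_1(D)$, so the generalized maximum principle \eqref{genmaxpr} applies provided the right-hand side is non-negative. Here is where $\Hdeux$ enters: the quantity $f(x) + x f'(x) = (xf(x))'$ is exactly the derivative of $x\mapsto xf(x)$, which is $\geq 0$ on $[0, N_{m,g}+\delta]$ by assumption. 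By Lemma~\ref{claim:borne} we have $\|u_{M,\rho,a}\|_\infty \leq N_{m,g}$, so $u_{M,\rho,a}(x)\in[0,N_{m,g}]\subset[0,N_{m,g}+\delta]$ at every point, hence $f(u_{M,\rho,a}) + u_{M,\rho,a}f'(u_{M,\rho,a}) \geq 0$ pointwise. Combined with $g\geq 0$, the entire right-hand side of \eqref{eq:U} is non-negative, and the generalized maximum principle yields $U_{M,\rho,a}\geq 0$.

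**The main obstacle.** The delicate point — and the reason Proposition~\ref{DA} was stated just before — is ensuring the zeroth-order coefficient $M(1-a)+\rho f'(u_{M,\rho,a})$ stays above $-\lambda_1(D)$ \emph{uniformly in $M$ and $a$}. Since $M(1-a)\geq 0$ always, this reduces to $\rho f'(u_{M,\rho,a}) \geq -\lambda_1(D)$, i.e. $\rho_1\|f'\|_\infty \leq \lambda_1(D)$, which is a clean constraint on $\rho_1$ not involving $M$. The second subtlety is that the switching-function argument requires these sign properties not just at $a$ but along the segment $a_1 + \varepsilon(a_2-a_1)$ appearing in the mean value theorem; but since every such convex combination again lies in $\widehat{\mathcal O}_m$, Lemma~\ref{claim:borne} and the above apply verbatim. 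I would then conclude exactly as in the first case: for $h = a_2 - a_1 \geq 0$, the mean value theorem and Lemma~\ref{lem:cionsOpt1} give $\hat J_{M,\rho}(a_2) - \hat J_{M,\rho}(a_1) = -M\int_D h\, u_{M,\rho,\tilde a} U_{M,\rho,\tilde a} \leq 0$, establishing \eqref{MonotonieRelaxee}.
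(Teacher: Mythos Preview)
Your proof is correct and follows essentially the same route as the paper: for $u_{M,\rho,a}$, separate out $f(0)$ to write the equation as $-\Delta u + M(1-a)u + \rho\frac{f(u)-f(0)}{u}u = g-\rho f(0)\geq 0$ and apply the generalized maximum principle with the Lipschitz bound $\rho\Vert f'\Vert_\infty<\lambda_1(D)$; for $U_{M,\rho,a}$, observe that the right-hand side of \eqref{eq:U} is $\tfrac12 g+\tfrac\rho2(xf(x))'\vert_{x=u_{M,\rho,a}}\geq 0$ by $\Hdeux$ and Lemma~\ref{claim:borne}, and again invoke \eqref{genmaxpr}. Your additional remarks on uniformity in $M,a$ and on the convex segment in the mean-value step are correct and useful.

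One caveat: your parenthetical ``cleaner route'' is not quite right as stated. The coefficient $\rho\frac{f(u)}{u}\mathbbm 1_{\{u\neq 0\}}$ is \emph{not} bounded below by $-\lambda_1(D)$ uniformly when $f(0)<0$: near points where $u$ is small and positive one has $\frac{f(u)}{u}\approx\frac{f(0)}{u}\to-\infty$. The fix is precisely what you (and the paper) do in the main argument: subtract $f(0)$ first, so the linearised coefficient becomes $\rho\frac{f(u)-f(0)}{u}$, which is bounded by $\rho\Vert f'\Vert_\infty$.
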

\begin{proof}
Let us choose $\rho$ such that $\rho \|f\|_\infty <\lambda_1(D)$. Since $u_{M,\rho,a}$ satisfies

\color{black}
$$-\Delta {u_{M,\rho,a}} +M(1-a){u_{M,\rho,a}}+\rho\frac{ f({u_{M,\rho,a}})-f(0)}{u_{M,\rho,a}}u_{M,\rho,a}\geq g-\rho f(0)\geq 0$$ because $f$ satisfies assumption $\Hdeux$.\color{black}

The non-negativity of $u_{M,\rho,a}$ is a consequence of the generalized maximum principle
\eqref{genmaxpr}. Indeed, for $\rho$ small enough, we have 
$$M(1-a)+\rho f(u_{M,\rho,a})>-\lambda_1(D).$$

Since $U_{M,\rho,a}$  satisfies \eqref{eq:U},
the proof follows the same lines assuming the $\rho \|f'\|_\infty < \lambda_1(D)$
and using the assumption $\Hdeux$ to get non-negativity of the right-hand side.
By mimicking the reasoning done at the end of the first case, one gets that \eqref{MonotonieRelaxee} is true if $\rho$ is small enough.
\end{proof}
Thus, in both cases, the monotonicity of the functional is established, so that the theorem of Buttazzo and Dal Maso applies: \color{black} there exists a solution $\O^*\in \mathcal O_m$ of \eqref{minJ}. The fact that $|\O^*|=m$ is a simple consequence of the monotonicity of the functional.\color{black}

\color{black}
\subsection{Proof of Theorem \ref{Pr:NE}: non-existence of regular optimal domains for some $(g,f)$}

Since the proof is mainly based on the use of topological derivatives (\cite{Sokolowski}), we only provide hereafter a sketch of proof. Let us assume the existence of a minimizer $\Omega$ of $J_\rho$ in $\mathcal O_m$ and of an interior point $x_0$ in $\Omega$.
Notice that existence of such a point $x_0$ is not guaranteed for general quasi-open sets, see e.g. \cite[Remark 4.4.7]{Velichkov}.

Let us perform a small circular hole in the domain: define $\Omega_\varepsilon=\Omega\setminus B(x_0,\varepsilon)$ for $\varepsilon>0$ small enough so that $\Omega_\varepsilon\subset \Omega$.

Following \cite{FNRSS,FEIJO2003}, one computes the so-called topological derivative $\left .dJ_\rho(\Omega_\varepsilon)/d\varepsilon \right|_{\varepsilon=0}$. One gets
\begin{equation}\label{TD}
J_\rho(\Omega_\varepsilon)=J_\rho(\Omega)+\pi\e^2 u_{\rho,\O}(x_0)U_{\rho,\O}(x_0) +\operatorname{o}(\varepsilon^2),\end{equation}
where $u_{\rho,\O}$ solves \eqref{eq:u} and $U_{\rho,\O}$ solves
\begin{equation}
\left\{\begin{array}{ll}
-\Delta U_{\rho,\O}+\rho f'(u_{\rho,\O})U_{\rho,\O}=\frac\rho2\left(f(u_{\rho,\O})+u_{\rho,\O}f'(u_{\rho,\O})\right) & \text{in }\Omega\\ 
U_{\rho,\O}\in W^{1,2}_0(\O). & 
\end{array}
\right.
\end{equation}

Since $u_{\rho,\O}$ satisfies 
$$-\Delta u_{\rho,\O}+\rho\frac{f(u_{\rho,\O})-f(0)}{u_{\rho,\O}}u_{\rho,\O}=-\rho f(0)>0$$ according to Assumption~\eqref{H4} and since $x_0$ is an interior point, it follows from the strong maximum principle that one has $u_{\rho,\O}(x_0)>0$.

On the other hand, assumption \eqref{H4} ensures that 
$$-\Delta U_{\rho,\O}+\rho f'(u_{\rho,\O})U_{\rho,\O}<0$$ so that $U_{\rho,\O}(x_0)<0$. As a consequence, for $\e>0$ small enough, we have $$J_\rho(\O_\e)<J_\rho(\O),$$ leading to a contradiction with the minimality of $\Omega$.

\begin{remark}
It is interesting to observe that the asymptotic expansion \eqref{TD} can be formally obtained by using the relaxation method: for a given $M>0$, for $a=\mathbbm 1_\O$ and $h_\e:=-\mathbbm 1_{\B(x_0,\e)}$,  Lemma~\ref{lem:cionsOpt1} yields
$$
\langle d \hat J_{M,\rho}(a),h_\e\rangle=-\int_D h_\e u_{M,\rho,a} U_{M,\rho,a}=\int_{\B(x_0,\e)} u_{M,\rho,a} U_{M,\rho,a} \underset{\e \to 0}\approx \pi \e^2 u_{M,\rho,a}(x_0) U_{M,\rho,a}(x_0).
$$
Passing to the limit $M\to \infty$ provides the expected expression. Of course, such a method is purely formal.

\end{remark}

\color{black}


\section{Proof of Theorem \ref{Th:Shape}}\label{Se:Shape}
Note first that the functional $J_\rho$ is shape differentiable, which follows from standard arguments, see e.g. \cite[Chapitre 5]{henrot-pierre}. 
\\ Our proof of Theorem \ref{Th:Shape} is divided into two steps: after proving the criticality of $\B^*$ for $\rho$ small enough, we compute the second order shape derivative of the Lagrangian associated with the problem at the ball.  Next, we  establish that, under Assumption \eqref{Eq:Hyp}, there exists a positive constant $C_0$ such that, for any admissible $V$, one has
 \begin{equation}\label{Eq:QuantiZero}(J_0-\Lambda_0 \operatorname{Vol})''(\B^*)[V,V]\geq C_0 \Vert V\cdot\nu\Vert _{L^2(\O)}^2.\end{equation}
 Finally, we prove that, for any radially symmetric, non-increasing non-negative $g$,  there exists  $M\in \R$ such that, for any admissible $V$, one has
 \begin{equation}\label{Eq:Convergence}
 (J_\rho-\Lambda_\rho \operatorname{Vol})''(\B^*)[V,V]\geq (J_0-\Lambda_0 \operatorname{Vol})''(\B^*)[V,V]-M\rho \Vert V\cdot\nu\Vert _{L^2(\O)}^2.\end{equation}
Local shape minimality of $\B^*$ for $\rho$ small enough can then be inferred in a straightforward way.

If $V$ is an admissible vector field, we will denote by $u'_{\rho,V}$ and $u''_{\rho,V}$ the first and second order (eulerian) shape derivatives of $u_\rho$ at $\B^*$ with respect to $V$. 
\subsection{Preliminary material}
\begin{lemma}\label{Le:Quali}
Under the assumptions of Theorem \ref{Th:Shape}, i.e when $g$ is radially symmetric and non-increasing function, for $\rho$ small enough, the function $\ur$ is radially symmetric nonincreasing. We write it $\ur=\p_\rho\left(|\cdot|\right)$.
Furthermore, if $\rho=0$, one has
$$
-\frac{\partial u_{0}}{\partial \nu}\geq \frac{R}2 g(R).
$$
\end{lemma}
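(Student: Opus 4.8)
The plan is to establish the radial monotonicity of $\ur$ for small $\rho$ by a perturbation argument starting from the case $\rho=0$, and then to read off the normal derivative bound from an explicit ODE computation in the case $\rho=0$.

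\textbf{Step 1: the case $\rho=0$.} When $\rho=0$, the equation reduces to $-\Delta u_0=g$ in $\B^*$ with $u_0\in W^{1,2}_0(\B^*)$, and $g$ is radially symmetric, non-increasing and non-negative. By uniqueness and the invariance of the problem under rotations, $u_0$ is radially symmetric; writing $u_0=\p_0(|\cdot|)$ with $r=|x|\in[0,R]$, the radial form of the Laplacian gives $-(r^{n-1}\p_0'(r))'=r^{n-1}g(r)$ in dimension $n=2$, i.e. $-(r\p_0'(r))'=rg(r)$. Integrating from $0$ to $r$ and using $\p_0'(0)=0$ (regularity at the origin) yields
$$
-r\p_0'(r)=\int_0^r s\, g(s)\,ds\geq 0,
$$
so $\p_0'(r)\leq 0$ and $u_0$ is non-increasing. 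Taking $r=R$ gives $-\p_0'(R)=-\partial_\nu u_0=\frac1R\int_0^R s\, g(s)\, ds$. Since $g$ is non-increasing, $\int_0^R s\,g(s)\,ds\geq g(R)\int_0^R s\,ds=\frac{R^2}{2}g(R)$, whence $-\partial_\nu u_0\geq \frac R2 g(R)$, which is the stated inequality. (Equivalently, one may write $\int_0^R s g(s)\,ds=\frac{1}{2\pi}\int_{\B^*}g$ and use Assumption~\eqref{Eq:Hyp}.)

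\textbf{Step 2: radial monotonicity for small $\rho>0$.} First, $\ur$ is radially symmetric for every $\rho$ for which it is defined, again by uniqueness of the solution to \eqref{eq:u} and rotational invariance of the data $g$ and of the equation. Write $\ur=\p_\rho(|\cdot|)$. To get monotonicity, I would combine two facts. From Proposition~\ref{DA} (applied with $a\equiv 1$, $M$ irrelevant, or directly by the same torsion-function comparison), $\|\ur-u_0\|_{L^\infty(\B^*)}\leq C\rho$, and by standard elliptic regularity one in fact gets convergence $\ur\to u_0$ in $\mathscr C^1(\overline{\B^*})$ as $\rho\to0$ (the right-hand side $g-\rho f(\ur)$ is bounded in $L^2$, hence $\ur$ bounded in $W^{2,2}\hookrightarrow \mathscr C^{0,\alpha}$, and then bootstrapping gives $\mathscr C^1$ convergence). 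In particular $\p_\rho'(R)\to \p_0'(R)<0$, so for $\rho$ small, $\p_\rho'(R)<0$. To propagate negativity of $\p_\rho'$ to the whole interval, integrate the radial equation $-(r\p_\rho'(r))'=r\bigl(g(r)-\rho f(\p_\rho(r))\bigr)$ from $0$ to $r$:
$$
-r\p_\rho'(r)=\int_0^r s\bigl(g(s)-\rho f(\p_\rho(s))\bigr)\,ds.
$$
Since $g$ is non-negative and bounded below near $r=R$ is not enough on its own; instead note that by the $W^{1,\infty}$ bound on $f$ and the uniform $L^\infty$ bound on $\ur$, $|f(\p_\rho(s))|\leq \|f\|_\infty$, so the integrand is $\geq s(g(s)-\rho\|f\|_\infty)$. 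This is not pointwise non-negative when $g$ is allowed to vanish, so the cleanest route is: use the $\mathscr C^1$ convergence to conclude that $\p_\rho'\to\p_0'$ uniformly on $[0,R]$; on any interval $[\eta,R]$ with $\eta>0$ one has $\p_0'\leq -c(\eta)<0$ by Step 1 (as long as $g\not\equiv 0$, which holds since $\int_{\B^*}g>0$ under \eqref{Eq:Hyp}; the degenerate case $g\equiv 0$, $f(0)\le 0$ can be treated separately by the maximum principle forcing $u_0\equiv 0$ or sign considerations), so $\p_\rho'<0$ there for small $\rho$; and near $r=0$, the integral identity gives $-r\p_\rho'(r)=\int_0^r s(g(s)-\rho f(\p_\rho(s)))\,ds$, whose sign for small $r$ is controlled because $\int_0^r s g(s)\,ds$ dominates $\rho\int_0^r s|f|\,ds$ once $r$ is bounded away from $0$; for $r$ very small one uses instead continuity and the fact that $\p_\rho'(r)/r\to -\tfrac12(g(0^+)-\rho f(\p_\rho(0)))$, handling the genuinely small-$r$ regime via a second-order Taylor expansion at the origin. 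Assembling these gives $\p_\rho'\leq 0$ on $[0,R]$ for $\rho$ small enough.

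\textbf{Main obstacle.} The delicate point is precisely this propagation of the sign of $\p_\rho'$ down to $r=0$: the nonlinear term $-\rho f(\p_\rho)$ has no definite sign and $g$ may vanish, so one cannot argue by a naive pointwise comparison in the integral identity, and one must instead combine the $\mathscr C^1$-convergence (good on $[\eta,R]$) with a careful expansion near the origin (good on $[0,\eta]$). I expect this case analysis, together with verifying the $\mathscr C^1$ convergence $\ur\to u_0$ with rate from the $L^\infty$ estimate of Proposition~\ref{DA} and elliptic regularity, to be where the real work lies; the normal-derivative bound itself is then an elementary ODE computation as in Step 1.
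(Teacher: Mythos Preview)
Your Step~1, establishing the normal derivative bound $-\partial_\nu u_0\geq \frac R2 g(R)$ via the integrated ODE $-R\p_0'(R)=\int_0^R s\, g(s)\,ds\geq g(R)\,\frac{R^2}2$, is correct and is exactly the computation the paper performs (phrased there via the divergence theorem on $\B^*$).

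For the radial monotonicity of $u_\rho$, however, the paper takes a much shorter route than your perturbation argument: it simply invokes the Schwarz rearrangement. The point is that for $\rho$ small the solution $u_\rho$ is non-negative and is the unique minimiser of the strictly convex energy
\[
E_\rho(u)=\frac12\int_{\B^*}|\nabla u|^2+\rho\int_{\B^*}F(u)-\int_{\B^*}gu,\qquad F'=f.
\]
The Schwarz symmetrisation $u_\rho^*$ satisfies $\int|\nabla u_\rho^*|^2\leq\int|\nabla u_\rho|^2$ (P\'olya--Szeg\H{o}), $\int F(u_\rho^*)=\int F(u_\rho)$ (equimeasurability), and $\int g\, u_\rho^*\geq\int g\, u_\rho$ (Hardy--Littlewood, since $g=g^*$ is already radially non-increasing). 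Hence $E_\rho(u_\rho^*)\leq E_\rho(u_\rho)$ and $u_\rho=u_\rho^*$ by uniqueness. This handles all $\rho$ small enough in one stroke, with no case analysis.

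Your perturbation route is not wrong, and the obstacle you flag near $r=0$ can be resolved essentially as you sketch: since $g$ is non-increasing and $g\not\equiv0$, one has $g(s)\geq g(\eta)>0$ on $[0,\eta]$ for some $\eta>0$, so the integral identity yields $\p_\rho'<0$ on $(0,\eta]$ once $\rho<g(\eta)/\|f\|_\infty$, while $\mathscr C^1$-convergence to $\p_0'$ (strictly negative on $[\eta,R]$) handles the rest. But the presentation in your Step~2 is muddled --- the clause ``once $r$ is bounded away from $0$'' points in the wrong direction (the integral comparison is easiest \emph{near} the origin, where $g$ is largest), and the three-region splitting you describe is more than is needed. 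The rearrangement argument sidesteps all of this; that is what the paper buys by invoking it.
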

\begin{proof}[Proof of Lemma \ref{Le:Quali}]
The fact that $\ur$ is a radially symmetric nonincreasing function follows from a simple application of the Schwarz rearrangement. 
Integrating the equation on the ball $\B^*$ yields
$$-\int_{\B^*} \Delta u_0 =- \int_{\partial \B^*} \frac{\partial u_0}{\partial \nu} = -2\pi  \frac{\partial u_0}{\partial \nu}$$
on the one-hand, while using the fact that $g$ is decreasing:
$$-\int_{\B^*} \Delta u_0 = \int_{\B^*} g \geq 2\pi g(R) \int_0^R r dr = \pi R g(R)$$
\end{proof}
By differentiating the main equation \color{black} \eqref{eq:u} with respect to the domain \color{black} and the boundary conditions (see e.g. \cite[Chapitre 5]{henrot-pierre}), we get that the functions $u'_{\rho,V}$ and $u''_{\rho,V}$ satisfy
\begin{equation}\label{Eq:FSD}
\left\{\begin{array}{ll}
-\Delta u'_{\rho,V}+\rho f'\left(\ur\right)u'_{\rho,V}=0& \text{ in }\B^*\\
u'_{\rho,V}=- \frac{\partial \ur}{\partial \nu }V\cdot \nu&\text{ on }\partial \B^*
\end{array}\right.\end{equation}
and 
\begin{equation}\label{Eq:SSD}
\left\{\begin{array}{ll}
-\Delta u''_{\rho,V}+\rho f'\left(\ur\right)u''_{\rho,V}+\rho f''(\ur)\left(u'_{\rho,V}\right)^2=0& \text{ in }\B^*\\
u''_{\rho,V}=-2 \frac{\partial u'_{\rho,V}}{\partial \nu }V\cdot \nu-(V\cdot\nu)^2\frac{\partial^2 \ur}{\partial\nu^2}&\text{ on }\partial \B^*.
\end{array}\right.
\end{equation}

 \subsection{Proof of the shape criticality of the ball}
 Proving the shape criticality of the ball boils down to showing the existence of a Lagrange multiplier $\Lambda_\rho\in \R$ such that for every admissible vector field $V\in W^{3,\infty}(\R^2,\R^2)$, one has
 \begin{equation}\label{eqMetz1621}
 (J_\rho-\Lambda_\rho \operatorname{Vol})'(\B^*)[V]=0
 \end{equation}
 Standard computations (see e.g. \cite[chapitre 5]{henrot-pierre})
%
yield
\begin{align*}
J_\rho'(\B^*)[V]&=\int_{\B^*}\langle \n \ur,\n u'_{\rho,V}\rangle-\int_{\B^*}gu'_{\rho,V}+\int_{\mathbb S^*}\frac12|\n \ur|^2V\cdot\nu
\\&=\int_{\mathbb S^*} u'_{\rho,V} \frac{\partial \ur}{\partial \nu}-\rho \int_{\B^*}u'_{\rho,V}f(\ur)+\int_{\mathbb S^*}\frac12|\n \ur|^2V\cdot\nu
\\&=-\int_{\mathbb S^*} \left( \frac{\partial \ur}{\partial \nu}\right)^2V\cdot\nu+\int_{\mathbb S^*}\frac12|\n \ur|^2V\cdot\nu-\rho \int_{\B^*}u'_{\rho,V}f(\ur)
\\&=-\frac12\int_{\mathbb S^*}|\n \ur|^2V\cdot\nu-\rho \int_{\B^*}u'_{\rho,V}f(\ur).
\end{align*}
We introduce the adjoint state $p_\rho$ as the unique solution of 
\begin{equation}\label{Eq:Adj}
\left\{\begin{array}{ll}
-\Delta p_\rho+\rho p_\rho f'(\ur)+\rho f(\ur)=0& \text{ in }\B^*\\
p_\rho=0 & \text{ on }\mathbb S^*.
\end{array}\right.
\end{equation} 
Since $\ur$ is radially symmetric, so is $p_\rho$. Multiplying the main equation of \eqref{Eq:Adj} by $u'_{\rho,V}$ and integrating by parts yields
$$
-\rho \int_{\B^*}u'_{\rho,V}f(\ur)=\int_{\mathbb S^*} \frac{\partial p_\rho}{\partial \nu}\frac{\partial \ur}{\partial \nu}V\cdot \nu,
$$ 
and finally
$$
J_\rho'(\B^*)[V]=\int_{\mathbb S^*}  \left(\frac{\partial p_\rho}{\partial \nu}\frac{\partial \ur}{\partial \nu}-\frac12\left(\frac{\partial \ur}{\partial \nu}\right)^2\right)V\cdot \nu.
$$
Observe that $\frac{\partial p_\rho}{\partial \nu}$ and $\frac{\partial \ur}{\partial \nu}$ are constant on $\mathbb S^*$since $\ur$ and $p_\rho$ are radially symmetric. Introduce the real number $\Lambda_\rho$ given by
\begin{equation}\label{def:Lambrho}
\Lambda_\rho= \left.\frac{\partial p_\rho}{\partial \nu}\frac{\partial \ur}{\partial \nu}-\frac12\left(\frac{\partial \ur}{\partial \nu}\right)^2\right|_{\mathbb S^*},
\end{equation}
we get that \eqref{eqMetz1621} is satisfied, whence the result.

\medskip

In what follows, we will exploit the fact that the adjoint state is radially symmetric. In the following definition, we sum-up the notations we will use in what follows.
 
\begin{definition}\label{def:fctsrad}
Recall that $\varphi_\rho$ (defined in Lemma \ref{Le:Quali}) is such that
$$
u_\rho (x)=\varphi_\rho(|x|), \quad \forall x\in \mathbb B^*.
$$
Since $p_\rho$ is also radially symmetric,  introduce $\phi_\rho$ such that
$$
p_\rho(x)=\phi_\rho(|x|), \quad \forall x\in \mathbb B^*.
$$
\end{definition}

%
%

\subsection{Second order optimality conditions}
Let us focus on the second and third points of Theorem \ref{Th:Shape}, especially on \eqref{Eq:Quanti}. Since $\B^*$ is a critical shape, it is enough to work with normal vector fields, in other words vector fields $V$ such that $V=(V\cdot\nu) \nu$ on $\mathbb S^*$. Consider such a vector field $V$. For the sake of notational simplicity, let us set $J_\rho''=J_\rho''(\B^*)[V,V]$, $\mathcal L_{\Lambda_\rho}''=(J_\rho-\Lambda_\rho \operatorname{Vol})''(\B^*)[V,V]$, $u=\ur$, $u'=u'_{\rho,V}$ and $u''=u''_{\rho,V}$. 
\color{black}\subsubsection{Computation of the second order derivative at the ball}\color{black}
To compute the second order derivative, we use the Hadamard second order formula \cite[Chap.~5, p.~227]{henrot-pierre} for normal vector fields, namely\color{black}
 $$\left.\frac{d^2}{dt^2}\right|_{t=0}\int_{(\operatorname{Id}+tV)\B^*} k(t)=\int_{\B^*} k''(0)+2\int_{\mathbb S^*} k'(0) V\cdot \nu+\int_{\mathbb S^*}\left(\frac1R k(0)+\frac{\partial k(0)}{\partial \nu}\right)(V\cdot\nu)^2,
 $$
applied to $k(t)=\frac12|\n u_t|^2-gu_t$, \color{black} where $u_t$ denotes the solution of \eqref{eq:u} on $(\operatorname{Id}+tV)\B^*$.

The Hadamard formula along with the weak formulation of Equations \eqref{Eq:FSD}-\eqref{Eq:SSD} yields
 \begin{align*}
 J_\rho''&=\int_{\B^*} \langle \n u,\n u''\rangle-\int_{\B^*} gu''+\int_{\B^*}|\n u'|^2+2\int_{\mathbb S^*} \frac{\partial u}{\partial \nu}\frac{\partial u'}{\partial \nu}V\cdot\nu-2\int_{\mathbb S^*}gu'V\cdot\nu
 \\&+\int_{\mathbb S^*}\left(\frac1{2R}|\n u|^2+\frac{\partial u}{\partial \nu}\frac{\partial^2 u}{\partial \nu^2}-g\frac{\partial u}{\partial \nu}\right)(V\cdot\nu)^2
 \\&=-\rho \int_{\B^*} f(u)u''-\rho \int_{\B^*} (u')^2f'(u)+\int_{\mathbb S^*}u''\frac{\partial u}{\partial \nu}+\int_{\mathbb S^*}u'\frac{\partial u'}{\partial \nu} 
 \\&+2\int_{\mathbb S^*}\frac{\partial u}{\partial \nu}\frac{\partial u'}{\partial \nu}V\cdot\nu-2\int_{\mathbb S^*} gu'V\cdot\nu
 +\int_{\mathbb S^*}\left(\frac1{2R}|\n u|^2+\frac{\partial u}{\partial \nu}\frac{\partial^2 u}{\partial \nu^2}-g\frac{\partial u}{\partial \nu}\right)(V\cdot\nu)^2
 \\&=-\rho \int_{\B^*} f(u)u''-\rho \int_{\B^*} (u')^2f'(u)+\int_{\mathbb S^*}\left(-2 \frac{\partial u'}{\partial \nu }V\cdot \nu-\frac{\partial^2 u}{\partial \nu^2} (V\cdot\nu)^2\right)\frac{\partial u}{\partial \nu}
 \\&-\int_{\mathbb S^*}\frac{\partial u}{\partial \nu}\frac{\partial u'}{\partial \nu} V\cdot\nu+2\int_{\mathbb S^*}\left(\frac{\partial u}{\partial \nu}\frac{\partial u'}{\partial \nu}- gu'\right)V\cdot \nu
 +\int_{\mathbb S^*}\left(\frac1{2R}|\n u|^2+\frac{\partial u}{\partial \nu}\frac{\partial^2 u}{\partial \nu^2}-g\frac{\partial u}{\partial \nu}\right)(V\cdot\nu)^2
  \\&=-\rho \int_{\B^*} f(u)u''-\rho \int_{\B^*} (u')^2f'(u)-\int_{\mathbb S^*}\frac{\partial u}{\partial \nu}\frac{\partial^2 u}{\partial \nu^2}(V\cdot\nu)^2-\int_{\mathbb S^*}\frac{\partial u}{\partial \nu}\frac{\partial u'}{\partial\nu}(V\cdot\nu)
  \\&+2\int_{\mathbb S^*}g \frac{\partial u}{\partial \nu}(V\cdot\nu)^2+\int_{\mathbb S^*}\left(\frac1{2R}|\n u|^2+\frac{\partial u}{\partial \nu}\frac{\partial^2u}{\partial \nu^2}-g\frac{\partial u}{\partial \nu}\right)(V\cdot\nu)^2
  \\&=-\rho \int_{\B^*} f(u)u''-\rho \int_{\B^*} (u')^2f'(u)+\int_{\mathbb S^*}\left(\frac1{2R} \left(\frac{\partial u}{\partial \nu}\right)^2+g\frac{\partial u}{\partial \nu}\right)(V\cdot\nu)^2-\int_{\mathbb S^*} \frac{\partial u}{\partial \nu}\frac{\partial u'}{\partial\nu}V\cdot \nu.
 \end{align*}
 
As such, the two first terms of the sum in the expression above are not tractable. Let us rewrite them. 
Multiplying the main equation of \eqref{Eq:Adj} by $u''$ and integrating two times by parts yields
 $$
 -\rho \int_{\B^*} f(u)u''=\int_{\mathbb S^*}u'' \frac{\partial p_\rho}{\partial \nu}-\rho \int_{\B^*} (u')^2p_\rho f''(u).
 $$
 To handle the last term of the right-hand side, let us introduce the function $\lambda_\rho$ defined as the solution of
  \begin{equation}\label{Eq:Lambda}
 \left\{
 \begin{array}{ll}
 -\Delta \lambda_\rho+\rho \lambda_\rho f' (u)+\rho u' p_\rho f''(u)=0 & \text{ in }\B^*\\
 \lambda_\rho=0 & \text{ on }\mathbb S^*.\end{array}\right.
 \end{equation}
Multiplying this equation by $u'$ and integrating by parts gives
$$-\rho\int_{\mathbb B^*} f''(u)(u')^2=\int_{\mathbb S^*}u'\frac{\partial \lambda_\rho}{\partial \nu}.$$

 To handle the term $-\rho \int_{\B^*} (u')^2f'(u)$ of $J_\rho''$, we introduce the function $\eta_\rho$, defined as the only solution to 
 \begin{equation}\label{Eq:Eta}
 \left\{
 \begin{array}{ll}
 -\Delta \eta_\rho+\rho \eta_\rho f' (u)+\rho u' f'(u)=0 & \text{ in }\B^*\\
 \eta_\rho=0 & \text{ on }\mathbb S^*.
 \end{array}\right.
 \end{equation}
 Multiplying this equation by $u'$ and integrating by parts gives
 $$-\rho \int_{\B^*} (u')^2f'(u)=\int_{\mathbb S^*}u' \frac{\partial \eta_\rho}{\partial \nu}=-\int_{\mathbb S^*} V\cdot\nu \frac{\partial \eta_\rho}{\partial \nu}\frac{\partial u}{\partial \nu}.
 $$ 
 Gathering these terms, we have
 \begin{align*}
 J_\rho''&=\int_{\mathbb S^*}u'' \frac{\partial p_\rho}{\partial \nu}+\int_{\mathbb S^*}u'\frac{\partial \lambda_\rho}{\partial \nu}-\int_{\mathbb S^*} \frac{\partial \eta_\rho}{\partial \nu}\frac{\partial u}{\partial \nu}V\cdot\nu -\int_{\mathbb S^*} \frac{\partial u}{\partial \nu}\frac{\partial u'}{\partial\nu}V\cdot \nu \\
 &+\int_{\mathbb S^*}\left(\frac1{2R} \left(\frac{\partial u}{\partial \nu}\right)^2+g\frac{\partial u}{\partial \nu}\right)(V\cdot\nu)^2.
 \end{align*} 
 Using that 
 $$
 \Lambda_\rho=\left.\frac{\partial p_\rho}{\partial \nu}\frac{\partial \ur}{\partial \nu}-\frac12\left(\frac{\partial \ur}{\partial \nu}\right)^2\right|_{\mathbb S^*}\quad \text{and}\quad \operatorname{Vol}''(\B^*)=\int_{\mathbb S^*}\frac1R  (V\cdot \nu) ^2,
 $$ 
one computes
\begin{equation}\label{Eq:OptSecond}
\boxed{\begin{split}
\mathcal L_{\Lambda_\rho}''& =\int_{\mathbb S^*}u'' \frac{\partial p_\rho}{\partial \nu}+\int_{\mathbb S^*}u'\frac{\partial \lambda_\rho}{\partial \nu}-\int_{\mathbb S^*} \frac{\partial \eta_\rho}{\partial \nu}\frac{\partial u}{\partial \nu}V\cdot\nu-\int_{\mathbb S^*} \frac{\partial u}{\partial \nu}\frac{\partial u'}{\partial\nu}V\cdot \nu 
 \\&+\int_{\mathbb S^*}\left(-\frac{\Lambda_\rho}R+\frac1{2R} \left(\frac{\partial u}{\partial \nu}\right)^2+g\frac{\partial u}{\partial \nu}\right)(V\cdot\nu)^2
\end{split}}
\end{equation}

 \subsubsection{Expansion in Fourier Series}
In this section, we recast the expression of $\mathcal L_{\Lambda_\rho}''$ in a more tractable form, by using the method introduced by Lord Rayleigh: since we are dealing with vector fields normal to $\mathbb S^*$, we expand $V\cdot\nu$ as a Fourier series. This leads to introduce the sequences of Fourier coefficients $(\alpha_k)_{k\in \N^*}$ and $(\beta_k)_{k\in \N^*}$ defined by:
 $$
  V\cdot \nu=\sum_{k\in \N^*}\Big(\alpha_k \cos(k\cdot)+\beta_k\sin(k\cdot)\Big),
  $$
  the equality above being understood in a $L^2(\mathbb S^*)$ sense. 
  
 Let $v_{k,\rho}$ (resp. $w_{k,\rho}$) denote the function $u'$ associated to the perturbation choice $V_k$ given by 
 $V_k=V_k^c:= \cos(k\cdot)\nu$ (resp. $V_k=V_k^s :=\sin(k\cdot)\nu$), in other words, $v_{k,\rho}=u'_{\rho,V_k^c}$ (resp. $w_{k,\rho}=u'_{\rho,V_k^s}$).
 Then, one shows easily (by uniqueness of the solutions of the considered PDEs) that for every $k \in \N$, there holds 
 $$
 v_{k,\rho}(r,\theta)=\psi_{k,\rho}(r) \cos(k\theta) \text{ (resp. $w_{k,\rho}(r,\theta)=\psi_{k,\rho}(r)\sin(k\theta)$)},
 $$
 where $(r,\theta)$ denote the polar coordinates in $\R^2$, where $\psi_{k,\rho}$ solves
\begin{equation}\label{def:psikrho}
\left\{\begin{array}{ll}
-\frac1r(r\psi_{k,\rho}')'=-\left(\frac{k^2}{r^2}+\rho f'(u)\right)\psi_{k,\rho} & \text{ in }(0,R)\\ 
\psi_{k,\rho}(R)=-\p_{\rho}'(R). &
\end{array}\right.
\end{equation}
By linearity, we infer that 
$$u'=\sum_{k\in \N^*}\alpha_k v_{k,\rho}+\beta_k w_{k,\rho}.$$
For every $k\in \N^*$, let us introduce $\eta_{k,\rho}$ as the solution of \eqref{Eq:Eta} associated with $v_{k,\rho}$. One shows that $\eta_{k,\rho}$ satisfies
\begin{equation}\label{Eq:EtaK}
 \left\{
 \begin{array}{ll}
 -\Delta \eta_{k,\rho}+\rho  f' (u)\eta_{k,\rho}+\rho  f'(u)v_{k,\rho}=0 & \text{ in }\B^*\\
 \eta_{k,\rho}=0 & \text{ on }\mathbb S^*.
 \end{array}\right.
 \end{equation}
 Similarly, one shows easily that 
 $$
 \eta_{k,\rho}(r,\theta)=\xi_{k,\rho}(r)\cos(k\theta),
 $$
 where $\xi_{k,\rho}$ satisfies
 \begin{equation}\label{def:xikrho}
\left\{\begin{array}{ll}
-\frac1r(r\xi_{k,\rho}')'=-\left(\frac{k^2}{r^2}+\rho f'(u)\right)\xi_{k,\rho} -\rho \psi_{k,\rho} & \text{ in }(0,R)\\ 
\xi_{k,\rho}(R)=0.&
\end{array}\right.
\end{equation}
Notice that one has $\xi_{k,\rho}=0$ whenever  $\rho=0$, which can be derived obviously from \eqref{Eq:Eta}.

\color{black}We recall that $u_\rho$ is radially symmetric and that we denote by $r\mapsto \varphi_\rho(r)$ this radial function.\color{black}

Finally, we introduce a last set of equations related to $\lambda_\rho$. Let us define $\zeta_{k,\rho}$ as the solution of 
\begin{equation}\label{def:zetakrho}
\left\{\begin{array}{ll}
-(r\zeta_{k,\rho}')'=-\frac{k^2}{r^2}\zeta_{k,\rho}-r\rho \zeta_{k,\rho}f'(u)-\rho r \psi_{k,\rho}\phi_{\rho}f''(u) & \text{ in }(0,R)\\
 \zeta_{k,\rho}(R)=0. & 
\end{array}\right.
\end{equation}
and verify that $\lambda_{\rho}=\zeta_{k,\rho}(r)\cos(k\theta)$ whenever $V=V_k$.

\begin{proposition}\label{Pr:Sep} 
The quadratic form $\mathcal L_{\Lambda_\rho}''$ expands as
\begin{equation}
\mathcal L_{\Lambda_\rho}''=\sum_{k=1}^\infty \omega_{k,\rho}\left(\alpha_k^2+\beta_k^2\right),\end{equation}
where, for any $k\in \N^*$, 
\begin{multline}\label{def:omegarhok}
\omega_{k,\rho}=\pi R\Big(-2\psi_{k,\rho}'(R)\phi_{\rho}'(R)-\p_{\rho}'(R)\zeta_{k,\rho}'(R)-\p_{\rho}''(R)\phi_{\rho}'(R) \\
-\xi_{k,\rho}'(R)\p_{\rho}'(R)-\frac{\Lambda_\rho}R+\frac1{2R} (\p_{\rho}')^2+g(R)\p_{\rho}'(R)-\p_{\rho}'(R)\psi_{k,\rho}'(R)\Big),
\end{multline}
the functions  $\psi_{k,\rho}$, $\xi_{k,\rho}$, $\zeta_{k,\rho}$ being  respectively   defined by \eqref{def:psikrho}, \eqref{def:xikrho}, \eqref{def:zetakrho}, and $\Lambda_\rho$ is given by \eqref{def:Lambrho}.
\end{proposition}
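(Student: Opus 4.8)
The plan is to substitute the Fourier expansion $V\cdot\nu=\sum_{k\geq1}\bigl(\alpha_k\cos(k\cdot)+\beta_k\sin(k\cdot)\bigr)$ into the boxed identity \eqref{Eq:OptSecond} for $\mathcal L_{\Lambda_\rho}''$ and to use the $L^2(\mathbb S^*)$-orthogonality of the trigonometric system to diagonalise the resulting quadratic form. Two structural facts make this work. First, the background states $u=\ur$ and $p_\rho$ are radially symmetric, so that $\frac{\partial u}{\partial\nu}$, $\frac{\partial^2 u}{\partial\nu^2}$ and $\frac{\partial p_\rho}{\partial\nu}$ are constant on $\mathbb S^*$, equal respectively to $\p_\rho'(R)$, $\p_\rho''(R)$ and $\phi_\rho'(R)$. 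Second, all the coefficients appearing in the linearised systems \eqref{Eq:FSD}, \eqref{Eq:Eta}, \eqref{Eq:Lambda} that define $u'$, $\eta_\rho$, $\lambda_\rho$ are radial, so these three functions separate in the angular variable, mode by mode.

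Accordingly, I would first record the modal decompositions. By linearity and uniqueness one gets $u'=\sum_{k\geq1}\psi_{k,\rho}(r)\bigl(\alpha_k\cos(k\theta)+\beta_k\sin(k\theta)\bigr)$, and likewise $\eta_\rho=\sum_{k\geq1}\xi_{k,\rho}(r)\bigl(\alpha_k\cos(k\theta)+\beta_k\sin(k\theta)\bigr)$ and $\lambda_\rho=\sum_{k\geq1}\zeta_{k,\rho}(r)\bigl(\alpha_k\cos(k\theta)+\beta_k\sin(k\theta)\bigr)$, the radial profiles $\psi_{k,\rho}$, $\xi_{k,\rho}$, $\zeta_{k,\rho}$ being the solutions of \eqref{def:psikrho}, \eqref{def:xikrho}, \eqref{def:zetakrho}. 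Since $V\in W^{3,\infty}(\R^2,\R^2)$, the datum $V\cdot\nu$ is smooth enough on $\mathbb S^*$ that, combined with interior and boundary elliptic estimates for these linear systems, the above series --- and those of the normal traces $\frac{\partial u'}{\partial\nu}$, $\frac{\partial\eta_\rho}{\partial\nu}$, $\frac{\partial\lambda_\rho}{\partial\nu}$ on $\mathbb S^*$ --- converge in norms strong enough (e.g. $\mathscr C^1(\overline{\B^*})$ for the functions) to allow the term-by-term integrations carried out below.

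Then I would evaluate the five boundary integrals of \eqref{Eq:OptSecond}. The second shape derivative $u''$ enters only through $\int_{\mathbb S^*}u''\,\frac{\partial p_\rho}{\partial\nu}=\phi_\rho'(R)\int_{\mathbb S^*}u''$, and the boundary condition in \eqref{Eq:SSD}, together with $\frac{\partial^2 u}{\partial\nu^2}=\p_\rho''(R)$ and $\frac{\partial u'}{\partial\nu}\big|_{\mathbb S^*}=\sum_k\psi_{k,\rho}'(R)\bigl(\alpha_k\cos(k\theta)+\beta_k\sin(k\theta)\bigr)$, yields, via $\int_0^{2\pi}\cos(k\theta)\cos(\ell\theta)\,d\theta=\pi\delta_{k\ell}=\int_0^{2\pi}\sin(k\theta)\sin(\ell\theta)\,d\theta$ and $\int_0^{2\pi}\cos(k\theta)\sin(\ell\theta)\,d\theta=0$,
\[
\int_{\mathbb S^*}u''=\pi R\sum_{k\geq1}\bigl(-2\psi_{k,\rho}'(R)-\p_\rho''(R)\bigr)(\alpha_k^2+\beta_k^2).
\]
Using moreover $u'|_{\mathbb S^*}=-\p_\rho'(R)\,V\cdot\nu$ and the constancy of $\frac{\partial u}{\partial\nu}=\p_\rho'(R)$, $g|_{\mathbb S^*}=g(R)$ and $\Lambda_\rho/R$, each of the four remaining integrals in \eqref{Eq:OptSecond} becomes an integral over $\mathbb S^*$ of a product of two series in the modes $\cos(k\theta)$, $\sin(k\theta)$ (one factor possibly reducing to a constant mode); orthogonality then kills every off-diagonal term and every cross term mixing a cosine and a sine, leaving the diagonal contributions $\pi R\bigl(-\p_\rho'(R)\zeta_{k,\rho}'(R)\bigr)$, $\pi R\bigl(-\p_\rho'(R)\xi_{k,\rho}'(R)\bigr)$, $\pi R\bigl(-\p_\rho'(R)\psi_{k,\rho}'(R)\bigr)$ and $\pi R\bigl(-\tfrac{\Lambda_\rho}{R}+\tfrac1{2R}(\p_\rho'(R))^2+g(R)\p_\rho'(R)\bigr)$ respectively. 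Adding these to $\phi_\rho'(R)\int_{\mathbb S^*}u''$ produces $\mathcal L_{\Lambda_\rho}''=\sum_{k\geq1}\omega_{k,\rho}(\alpha_k^2+\beta_k^2)$ with $\omega_{k,\rho}$ exactly the expression in \eqref{def:omegarhok}.

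The algebra here is pure bookkeeping --- one simply matches the five terms of \eqref{Eq:OptSecond} against the seven terms of \eqref{def:omegarhok} --- so the step that really needs care is the analytic one flagged above: checking that the Fourier series of $V\cdot\nu$ induces, for $u'$, $\eta_\rho$ and $\lambda_\rho$, series that converge together with their normal traces in a topology strong enough to integrate term by term on $\mathbb S^*$. This is precisely where the $W^{3,\infty}$ regularity of $V$ and elliptic regularity for the linearised problems come in.
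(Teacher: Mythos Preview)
Your proposal is correct and follows essentially the same approach as the paper: both plug the Fourier decomposition of $V\cdot\nu$ into the boxed formula \eqref{Eq:OptSecond}, use the radial symmetry of $u_\rho$ and $p_\rho$ to reduce the normal traces to constants, separate variables in the linearised equations to get the profiles $\psi_{k,\rho}$, $\xi_{k,\rho}$, $\zeta_{k,\rho}$, and invoke $L^2(\mathbb S^*)$-orthogonality of the trigonometric system to kill the cross terms. The only cosmetic difference is that the paper first computes $\mathcal L_{\Lambda_\rho}''$ for a single mode $V\cdot\nu=\cos(k\cdot)$ and then argues that the general case follows by expanding $(V\cdot\nu)^2$ and noting that the cross terms vanish, whereas you carry the full series through from the start; your added remark on the convergence of the modal series (via the $W^{3,\infty}$ regularity of $V$ and elliptic estimates) is a point the paper leaves implicit.
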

\begin{proof}[Proof of Proposition \ref{Pr:Sep}]
Let us first deal with the particular case $V\cdot\nu=\cos(k\cdot)$. According to \eqref{Eq:FSD}, \eqref{Eq:SSD} and \eqref{Eq:OptSecond}, one has
\begin{eqnarray*}
\mathcal L_{\Lambda_\rho}'' &=& \int_{\mathbb S^*}u'' \frac{\partial p_\rho}{\partial \nu}+\int_{\mathbb S^*}u'\frac{\partial \lambda_\rho}{\partial \nu}-\int_{\mathbb S^*} \frac{\partial \eta_\rho}{\partial \nu}\frac{\partial u}{\partial \nu}V\cdot\nu-\int_{\mathbb S^*} \frac{\partial u}{\partial \nu}\frac{\partial u'}{\partial\nu}V\cdot \nu 
 \\
 &&+\int_{\mathbb S^*}\left(-\frac{\Lambda_\rho}R+\frac1{2R} \left(\frac{\partial u}{\partial \nu}\right)^2+g\frac{\partial u}{\partial \nu}\right)(V\cdot\nu)^2 
 \\
 &=& R\int_0^{2\pi}\left(-2\cos(k\theta)^2\psi_{k,\rho}'(R)-\cos(k\theta)^2 \p_{\rho}''(R)\right)\phi_\rho'(R)d\theta
 -R\int_0^{2\pi} \cos(k\theta)^2 \p_{\rho}'(R)\zeta_{k,\rho}'(R)d\theta
 \\
 && -R\int_0^{2\pi}\cos(k\theta)^2\xi_{k,\rho}'(R)\p_{\rho}'(R)d\theta
 +R\int_0^{2\pi}\cos(k\theta)^2\left(-\frac{\Lambda_\rho}R+\frac1{2R} (\p_{\rho}')^2+g(R)\p_{\rho}'(R)\right)d\theta
 \\ 
 &&
 -R\int_0^{2\pi}\cos(k\theta)^2\p_{\rho}'(R)\psi_{k,\rho}'(R)d\theta
 \end{eqnarray*}
 and therefore
 \begin{eqnarray*}
\frac{\mathcal L_{\Lambda_\rho}''}{\pi R} &=& -2\psi_{k,\rho}'(R)\phi_{\rho}'(R)-\p_{\rho}'(R)\zeta_{k,\rho}'(R)-\p_{\rho}''(R)\phi_{\rho}'(R) -\xi_{k,\rho}'(R)\p_{\rho}'(R)
 \\&&-\frac{\Lambda_\rho}R+\frac1{2R} (\p_{\rho}')^2+g(R)\p_{\rho}'(R)-\p_{\rho}'(R)\psi_{k,\rho}'(R)
\end{eqnarray*}
We have then obtained the expected expression for this particular choice of vector field $V$. Similar computations enable us to recover the formula when dealing with the vector field $V$ given by $V\cdot \nu=\sin (k\cdot)$. Finally, for general $V$, one has to expand the square $(V\cdot\nu)^2$, and the computation follows exactly the same lines as before. Note that all the crossed terms of the sum (i.e. the term that do not write as squares of real numbers) vanish, by using the $L^2(\mathbb{S})$ orthogonality properties of the families $\left(\cos(k\cdot),\sin(k\cdot)\right)_{k\in \N}$.
\end{proof}
\subsubsection{Comparison principle on the family $\{\omega_{k,\rho}\}_{k\in \N^*}$}
The next result allows us to recast the ball stability issue in terms of the sign of $\omega_{1,\rho}$.
\begin{proposition}\label{Pr:Mono}
There exists $M>0$ such that, for any $\rho$ small enough,
$$
\forall k\in \N^*, \ \omega_{k,\rho}-\omega_{1,\rho}\geq -M\rho\qquad \text{and} \qquad  |\omega_{1,\rho}-\omega_{1,0}|\leq M\rho.
$$
\end{proposition}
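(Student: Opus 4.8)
The plan is to compare, term by term, the explicit formula \eqref{def:omegarhok} for $\omega_{k,\rho}$ with the one for $\omega_{1,\rho}$ (resp.\ for $\omega_{1,0}$). Set $A_k:=\psi_{k,\rho}'(R)-\psi_{1,\rho}'(R)$, $B_k:=\zeta_{k,\rho}'(R)-\zeta_{1,\rho}'(R)$ and $C_k:=\xi_{k,\rho}'(R)-\xi_{1,\rho}'(R)$. The four terms of \eqref{def:omegarhok} that do not depend on $k$, namely $-\p_{\rho}''(R)\phi_{\rho}'(R)$, $-\Lambda_\rho/R$, $\tfrac1{2R}\big(\p_\rho'(R)\big)^2$ and $g(R)\p_\rho'(R)$, cancel in the difference, leaving
$$
\omega_{k,\rho}-\omega_{1,\rho}=\pi R\Big(-A_k\big(2\phi_\rho'(R)+\p_\rho'(R)\big)-\p_\rho'(R)B_k-\p_\rho'(R)C_k\Big).
$$
I would then prove that (i) $A_k\geq 0$ for every $k\in\N^*$; (ii) $2\phi_\rho'(R)+\p_\rho'(R)<0$ for $\rho$ small; and (iii) there is $C>0$, independent of $k$ and of (small) $\rho$, with $|\p_\rho'(R)|\leq C$ and $\sup_{k\in\N^*}\big(|B_k|+|C_k|\big)\leq C\rho$. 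Items (i)--(ii) make the first term on the right-hand side non-negative, and (iii) bounds the last two in absolute value by a constant times $\rho$; this gives $\omega_{k,\rho}-\omega_{1,\rho}\geq -M\rho$ with $M$ the resulting constant. The second estimate, $|\omega_{1,\rho}-\omega_{1,0}|\leq M\rho$, only concerns $k=1$, where $A_1=B_1=C_1=0$; it follows by subtracting \eqref{def:omegarhok} at $\rho$ from \eqref{def:omegarhok} at $0$ and using that each ingredient $\p_\rho'(R),\ \p_\rho''(R),\ \phi_\rho'(R),\ \psi_{1,\rho}'(R),\ \xi_{1,\rho}'(R),\ \zeta_{1,\rho}'(R),\ \Lambda_\rho$ lies within $O(\rho)$ of its value at $\rho=0$. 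These last bounds are straightforward consequences of elliptic estimates for $-\Delta(u_\rho-u_0)=-\rho f(u_\rho)$ and for \eqref{Eq:Adj}, \eqref{Eq:Eta}, \eqref{Eq:Lambda}, together with the fact that $\|u_\rho\|_\infty$ is bounded (Talenti), so that $f(u_\rho)$, $f'(u_\rho)$, $f''(u_\rho)$ are bounded as well.

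For (ii), note that $\phi_\rho'(R)=O(\rho)$ since $p_\rho$ solves \eqref{Eq:Adj} with right-hand side $-\rho f(u_\rho)$ and $p_0\equiv0$, while $\p_\rho'(R)\to\p_0'(R)$, which is strictly negative because $\int_{\B^*}g>0$ (see Lemma \ref{Le:Quali}); hence $2\phi_\rho'(R)+\p_\rho'(R)<0$ for $\rho$ small. For (i), I would use a comparison principle on the radial ODEs \eqref{def:psikrho}. First, for $\rho\|f'\|_\infty$ small the zeroth-order coefficient $\tfrac{k^2}{r^2}+\rho f'(u)$ is positive, and the solution of \eqref{def:psikrho} that is regular at the origin cannot vanish on $(0,R]$ (from $(r\psi_{k,\rho}')'\geq0$ the function $\psi_{k,\rho}$ is non-decreasing, and $\psi_{k,\rho}(R)=-\p_\rho'(R)>0$), so $\psi_{k,\rho}>0$ on $(0,R]$. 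Then $\psi_{1,\rho}-\psi_{k,\rho}$ solves
$$
-\tfrac1r\big(r(\psi_{1,\rho}-\psi_{k,\rho})'\big)'+\Big(\tfrac1{r^2}+\rho f'(u)\Big)(\psi_{1,\rho}-\psi_{k,\rho})=\tfrac{k^2-1}{r^2}\,\psi_{k,\rho}\geq0
$$
with zero boundary value at $r=R$, so the generalized maximum principle gives $\psi_{1,\rho}\geq\psi_{k,\rho}$ on $(0,R)$; since the two functions agree at $r=R$, this forces $\psi_{k,\rho}'(R)\geq\psi_{1,\rho}'(R)$, i.e.\ $A_k\geq 0$.

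The heart of the matter is the uniform-in-$k$ estimate (iii). The subtlety is that $\|\psi_{k,\rho}\|_{L^\infty((0,R))}=|\p_\rho'(R)|$ does not decay in $k$, so one must not bound the source terms of \eqref{def:xikrho}, \eqref{def:zetakrho} in $L^\infty$; instead one exploits that $\psi_{k,\rho}$ concentrates near $r=R$. Comparing $\psi_{k,\rho}$ with the harmonic profile $r\mapsto-\p_\rho'(R)(r/R)^k$, which solves the same ODE with $\rho=0$ and the same boundary value, and invoking the maximum principle, one obtains $\|\psi_{k,\rho}\|_{L^2((0,R),\,r\,dr)}\leq C/\sqrt k$, that is $\|v_{k,\rho}\|_{L^2(\B^*)}\leq C/\sqrt k$. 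Reading \eqref{Eq:EtaK} as the two-dimensional equation $-\Delta\eta_{k,\rho}+\rho f'(u)\eta_{k,\rho}=-\rho f'(u)v_{k,\rho}$ for $\eta_{k,\rho}=\xi_{k,\rho}(r)\cos(k\theta)$ on $\B^*$ with homogeneous Dirichlet condition, and likewise \eqref{Eq:Lambda} as $-\Delta\lambda_\rho+\rho f'(u)\lambda_\rho=-\rho f''(u)p_\rho v_{k,\rho}$ for $\lambda_\rho=\zeta_{k,\rho}(r)\cos(k\theta)$, the $H^2$ elliptic estimate on the ball $\B^*$ --- whose constant is independent of $k$ and uniform for $\rho\|f'\|_\infty<\lambda_1(\B^*)$ --- yields $\|\eta_{k,\rho}\|_{H^2(\B^*)}\leq C\rho\|v_{k,\rho}\|_{L^2}\leq C\rho/\sqrt k$ and $\|\lambda_\rho\|_{H^2(\B^*)}\leq C\rho\|p_\rho\|_\infty\|v_{k,\rho}\|_{L^2}\leq C\rho^2/\sqrt k$; the trace inequality then gives $|\xi_{k,\rho}'(R)|\leq C\rho/\sqrt k$ and $|\zeta_{k,\rho}'(R)|\leq C\rho^2/\sqrt k$, both uniformly in $k$, which is (iii). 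An alternative, purely ODE route --- matching the remarks following the statement --- is to compare the radial functions $\xi_{k,\rho}$, $\zeta_{k,\rho}$ with explicit barriers such as $r\mapsto c\,\rho\,r^2\big(1-(r/R)^{k-2}\big)$, which gives the same bounds directly. I expect this uniform-in-$k$ control of $\xi_{k,\rho}$ and $\zeta_{k,\rho}$ --- where the naive $L^\infty$ bound on the source must be replaced by its $L^2$ smallness --- to be the main obstacle; the remaining steps combine only the maximum principle and standard elliptic regularity.
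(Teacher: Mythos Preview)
Your strategy is essentially the paper's: the same term-by-term decomposition of $\omega_{k,\rho}-\omega_{1,\rho}$, the same comparison argument to get $A_k\geq 0$, and elliptic estimates to control $\phi_\rho',\xi_{k,\rho}',\zeta_{k,\rho}'$ by $O(\rho)$ (resp.\ $O(\rho^2)$). The second inequality is handled identically in both.

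The one substantive difference is in item (iii). You flag as the ``main obstacle'' that $\|\psi_{k,\rho}\|_{L^\infty}$ does not decay in $k$, and therefore route the estimate through $\|\psi_{k,\rho}\|_{L^2}\leq C/\sqrt{k}$ and an $H^2$ bound on $\eta_{k,\rho}$. This works, but the obstacle is illusory and the paper's route is shorter: from the very comparison you established, $0\leq\psi_{k,\rho}\leq\psi_{1,\rho}$ on $(0,R)$, so $\|\psi_{k,\rho}\|_{L^\infty}\leq\|\psi_{1,\rho}\|_{L^\infty}\leq C$ uniformly in $k$. Hence the source $-\rho f'(u)v_{k,\rho}$ in the two-dimensional equation \eqref{Eq:EtaK} is bounded in $L^p(\B^*)$ for every $p$ by $C\rho$, uniformly in $k$; the $W^{2,p}$ estimate on the fixed domain $\B^*$ (constant independent of $k$) and the embedding $W^{2,p}\hookrightarrow\mathscr C^{1}$ then give $|\xi_{k,\rho}'(R)|\leq C\rho$ directly. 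The same applies to $\zeta_{k,\rho}$ with an extra factor $\|p_\rho\|_\infty=O(\rho)$. Your $L^2$ detour does buy the sharper decay $|\xi_{k,\rho}'(R)|\leq C\rho/\sqrt{k}$, but that refinement is not needed for the present proposition.
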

\begin{proof}[Proof of Proposition \ref{Pr:Mono}]
Fix  $k\in \N$ and introduce $\tilde{\omega}_{k,\rho}=\omega_{k,\rho}/(\pi R)$. Using \eqref{def:omegarhok}, one computes
\begin{eqnarray*}
\tilde{\omega}_{k,\rho}-\tilde{\omega}_{1,\rho}&=& \left(-\p_{\rho}'(R)-2\phi_{\rho}'(R)\right)(\psi_{k,\rho}'(R)-\psi_{1,\rho}'(R))\\
&& -\p_{\rho}'(R)\left(\xi_{k,\rho}'(R)-\xi_{1,\rho}'(R)+\zeta_{k,\rho}'(R)-\zeta_{1,\rho}'(R)\right).
\end{eqnarray*}
We need to control each term of the expression above, which is the goal of the next results, whose proofs are postponed at the end of this section.
\begin{lemma}\label{Cl:Borne}
There exists $M>0$ and $\bar \rho>0$ such that for $\rho\in [0,\bar \rho]$, one has
$$
\max \left\{\Vert  \p_{\rho}'-\p_{0}'\Vert _{L^\infty(0,R)}, \Vert  \phi_{\rho}'\Vert _{L^\infty(0,R)} , \Vert \xi_{k,\rho}'\color{black}\Vert_\infty\color{black}\right\}\leq M\rho\quad\text{and}\quad \Vert\zeta_{k,\rho}'\color{black}\Vert_\infty\color{black}\leq M\rho^2.
$$
\end{lemma}

According to Lemma \ref{Le:Quali}, one has in particular $\p_{0}'(R)<0$.
We thus infer from Lemma~\ref{Cl:Borne} the existence of $\delta >0$ such that
$$
\min \{-\p_{\rho}'(R)-2\phi_{\rho}'(R) , -\p_{\rho}'(R)\}\geq \delta>0.
$$ 
for $\rho$ small enough. 
Furthermore, Lemma \ref{Cl:Borne} also yields easily the estimate
$$
|\zeta_{k,\rho}'(R)-\zeta_{1,\rho}'(R)|\leq M\rho^2
$$
Hence, we are done by applying the following result.
\begin{lemma}\label{lem:2estimControlOm}
There exists $\hat M>0$ and $\bar \rho>0$ such that for $\rho\in [0,\bar \rho]$, one has
\begin{equation}\label{Eq:1}
\psi_{k,\rho}'(R)-\psi_{1,\rho}'(R)\geq 0\quad \text{and} \quad \left|\xi_{k,\rho}'(R)-\xi_{1,\rho}'\right|(R)\leq \hat M\rho.
\end{equation}
\end{lemma}
Indeed, the results above lead to 
$$
\omega_{k,\rho}-\omega_{1,\rho}\geq \delta (\psi_{k}'(R)-\psi_{k,\rho}'(R)+\xi_{k,\rho}'(R)-\xi_{k,\rho}'(R))\geq 0
$$ for every $k\geq 1$ and $\rho$ small enough. 

Finally, the proof of the second inequality follows the same lines and are left to the reader.
\end{proof}

\begin{proof}[Proof of Lemma \ref{Cl:Borne}]
These convergence rates are simple consequences of elliptic regularity theory. 
Since the reasonings for each terms are similar, we only focus on the estimate of $\Vert \phi_{\rho}'\color{black}\Vert_\infty\color{black}$. Recall that $p_\rho$ solves the equation \eqref{Eq:Adj}.
Multiplying this equation by $p_\rho$, integrating by parts and using the Poincar\'e inequality yield the existence of $C>0$ such that
$$
\left(1-\rho C\Vert f'\Vert_{L^\infty(\B^*)}\right) \Vert \n p_\rho\Vert_{L^2(\B^*)}^2 \leq \rho \Vert f\Vert_{L^\infty(\B^*)}\Vert p_\rho\Vert_{L^2(\B^*)},
$$
so that $\Vert p_\rho\Vert_{W^{1,2}_0(\B^*)}$ is uniformly bounded for $\rho$ small enough. Hence, the elliptic regularity theory yields that $p_\rho$ is in fact  uniformly bounded in $W^{2,2}(\B^*)$, and there exists $\hat M>0$ such that, \color{black} defining $W^{2,2}_0(\B^*):=W^{2,2}(\B^*)\cap W^{1,2}_0(\B^*)$, \color{black}
$
\Vert p_\rho\Vert_{W^{2,2}_0(\B^*)}\leq \hat M\rho
$ 
and, since $\B^*\subset \R^2$, we get
$$
\Vert p_\rho\Vert_{L^\infty(\B^*)}\leq M\rho.
$$ 
Since $\Delta p_\rho=\rho p_\rho f'(\ur)+\rho f(\ur)$ and the right-hand side belongs to $L^{p}(\B^*)$ for all $p\geq 1$, the elliptic regularity theory yields the existence of $C>0$ such that
$$\Vert p_\rho\Vert_{W^{2,p}_0(\O)}\leq C\left(\rho \Vert p_\rho\color{black}\Vert_\infty\color{black}\Vert f'\color{black}\Vert_\infty\color{black}+\rho \Vert f\color{black}\Vert_\infty\color{black}\right)\leq M\rho$$ and using the embedding $W^{2,p}\hookrightarrow \mathscr C^{1,\alpha}$ for $p$ large enough, one finally gets 
$$
\Vert \n p_\rho\Vert_{L^\infty(\B^*)}\leq M\rho.
$$
\end{proof}

\begin{proof}[Proof of Lemma \ref{lem:2estimControlOm}]
The two estimates are proved using the maximum principle.
Let us first prove that, for any $k$ and any $\rho$ small enough, $\psi_{k,\rho}$ is non-negative on $(0,R)$.
Since, for $\rho$ small enough, $-\p_{\rho}'(R)$ is positive, and therefore $\psi_{k,\rho}(R)>0$. Since $v_k$ belongs to $W^{1,2}_0$, one has necessarily $\psi_{k,\rho}(0)=0$.
Furthermore, according to \eqref{def:psikrho},
by considering $\rho>0$ small enough so that 
$$
-\frac1{r^2}+\rho \Vert f'\color{black}\Vert_\infty\color{black}\leq -\frac1{2r^2}
$$ 
it follows that 
$$
-\frac1r(r\psi_{k,\rho}')'=c_{k,\rho}(r)\psi_{k,\rho}\quad \text{with} \quad c_{k,\rho}=-\frac{k^2}{r^2}-\rho f'(u_0)<0.
$$ 
Let us argue by contradiction, assuming that $\psi_{k,\rho}$ reaches a negative minimum at a point $r_1$. Because of the boundary condition, $r_1$ is necessarily an interior point of $(0,R)$. Then, from the equation,
$$0\geq -\psi_{k,\rho}''(r_1)=c_{k,\rho}(r_1) \psi_{k,\rho}(r_1)>0,$$ which is a contradiction. Thus there exists $\overline\rho>0$ small enough such that, for any $\rho \leq \overline \rho$ and every $k\in \N^*$, $\psi_{k,\rho}$ is non-negative on $(0,R)$.

Now, introduce $z_k=\psi_{k,\rho}-\psi_{1,\rho}$ for every $k\geq 1$ and notice that it satisfies
$$
-\frac1r(rz_k')'=\frac{1}{r^2} \psi_{1,\rho}-\frac{k^2}{r^2}\psi_{k,\rho}-\rho f'(u_0)z_k.
$$
Since $\psi_{k,\rho}$ is non-negative, it implies
$$
-\frac1r(rz_k')'\leq \left(-\frac{k^2}{r^2}-\rho f'(u_0)\right)z_k,\quad\text{and}\quad z_k(R)=z_k(0)=0.
$$ 
Up to decreasing $\bar \rho$, one may assume that for $\rho \leq \overline \rho$, $-\frac{k^2}{r^2}-\rho f'(u_0)<0$  in $(0,R)$. If $z_k$ reached a positive maximum, it would be at an interior point $r_1$, but we would have
$$0\leq -z_k''(r_1)<\left(-\frac{k^2}{r^2}-\rho f'(u_0)\right)z_k(r_1)<0.$$
Hence, one has necessarily $z_k\leq 0$ in $(0,R)$ and $z_k$ reaches a maximum at $R$, which means in particular that $z_k'(R)=\psi_{k,\rho}'(R)-\psi_{1,\rho}'(R)\geq 0$.
\end{proof}

%
%

\subsection{A further comparison result on the family $\{\omega_{k,\rho}\}_{k\in \N}$}

\color{black}
While the previous section helps us determine the sign of the sequence $\{\omega_{k,\rho}\}_{k\in \N^*}$ and thus gives us a stability criterion for the ball, we address here a more precise property, that of the optimal coercivity norm. We keep the same notation. If we assume that 
$$\forall k \in \N\,, \omega_{k,\rho}>0$$ which is guaranteed provided we have $\omega_{1,\rho}>0$ (see the next subsection \ref{sec:shapeinstab}), obtaining the $H^{1/2}$-coercivity norm is equivalent to proving that, for some constant $\ell_\rho>0$ we have 
$$
\omega_{k,\rho}\geq \ell_\rho k>0\quad \text{ for any $k$ large enough.}
$$
This property is established is the following result.

\begin{proposition}\label{Pr:H12}
There exist  $\ell_1>0$, $k_1>0$ and $M>0$ such that, for any $\rho$ small enough,
$$
\forall k\in \N^*, \qquad k\geq k_1\Longrightarrow \omega_{k,\rho}\geq\ell_1k -M\rho.
$$
As a consequence if $\omega_{k,\rho}>0$ for any $k\in \N$, then there exists a constant $\tilde\ell_0>0$ such that
$$\forall k \in \N^*\,, \omega_{k,\rho}\geq \tilde\ell_0 k.$$
\end{proposition}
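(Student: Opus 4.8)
The plan is to isolate in the formula \eqref{def:omegarhok} the single contribution that grows with $k$ and to show that it grows linearly, while every other term is $O(\rho)$ or $O(1)$ uniformly in $k$. Writing $\tilde\omega_{k,\rho}=\omega_{k,\rho}/(\pi R)$, I would first regroup \eqref{def:omegarhok} as
\[
\tilde\omega_{k,\rho}=\bigl(-\varphi_\rho'(R)-2\phi_\rho'(R)\bigr)\psi_{k,\rho}'(R)-\varphi_\rho'(R)\bigl(\zeta_{k,\rho}'(R)+\xi_{k,\rho}'(R)\bigr)+\mathcal C_\rho ,
\]
where $\mathcal C_\rho:=-\varphi_\rho''(R)\phi_\rho'(R)-\Lambda_\rho/R+(\varphi_\rho'(R))^2/(2R)+g(R)\varphi_\rho'(R)$ does not depend on $k$. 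By Lemma~\ref{Cl:Borne} (together with standard elliptic estimates on $u_\rho$ controlling $\varphi_\rho''(R)$) one has $|\varphi_\rho'(R)\zeta_{k,\rho}'(R)|+|\varphi_\rho'(R)\xi_{k,\rho}'(R)|\le M'\rho$ and $|\mathcal C_\rho|\le C_1$, uniformly in $k$ and in $\rho$ small; moreover, exactly as in the proof of Proposition~\ref{Pr:Mono}, $-\varphi_\rho'(R)-2\phi_\rho'(R)\ge\delta>0$ for $\rho$ small. Hence the whole statement reduces to a lower bound of the form $\psi_{k,\rho}'(R)\ge c\,k-C\rho$ with $c>0$, uniform in $k$ and $\rho$ small.

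To get this, I would peel off the leading behaviour of $\psi_{k,\rho}$ at the origin. Let $v_k(r)=(r/R)^k$, which solves the homogeneous ($\rho=0$) version of \eqref{def:psikrho}, and set $q_{k,\rho}:=\psi_{k,\rho}/v_k$. Plugging $\psi_{k,\rho}=v_k q_{k,\rho}$ into \eqref{def:psikrho} and simplifying, one checks that $q_{k,\rho}$ is the radial solution on $(0,R)$ of
\[
q_{k,\rho}''+\frac{2k+1}{r}\,q_{k,\rho}'=\rho f'(u)\,q_{k,\rho},\qquad q_{k,\rho}'(0)=0,\qquad q_{k,\rho}(R)=-\varphi_\rho'(R),
\]
that is, $q_{k,\rho}$ solves $\Delta q=\rho f'(u)q$ among radial functions on the ball $B_R\subset\R^{2k+2}$ with constant boundary value $-\varphi_\rho'(R)$. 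Integrating the identity $(r^{2k+1}q_{k,\rho}')'=\rho r^{2k+1}f'(u)q_{k,\rho}$ from $0$ gives $|q_{k,\rho}'(r)|\le\frac{\rho\|f'\|_\infty R}{2k+2}\|q_{k,\rho}\|_{L^\infty(0,R)}$, hence first $\|q_{k,\rho}\|_{L^\infty(0,R)}\le 2|\varphi_\rho'(R)|$ (by a one–step bootstrap using $q_{k,\rho}(r)=q_{k,\rho}(R)-\int_r^R q_{k,\rho}'$) and then $|q_{k,\rho}'(R)|\le C\rho$, all bounds being uniform in $k\ge1$ and $\rho$ small, since $|\varphi_\rho'(R)|$ stays bounded by Lemma~\ref{Cl:Borne}. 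Since $\psi_{k,\rho}'(R)=q_{k,\rho}'(R)+\frac{k}{R}q_{k,\rho}(R)=q_{k,\rho}'(R)-\frac{k}{R}\varphi_\rho'(R)$ and $-\varphi_\rho'(R)\ge\tfrac12(-\varphi_0'(R))=:\tfrac{\delta_0}{2}>0$ for $\rho$ small (using $\varphi_0'(R)<0$ from Lemma~\ref{Le:Quali} and Lemma~\ref{Cl:Borne}), this yields $\psi_{k,\rho}'(R)\ge\frac{\delta_0}{2R}k-C\rho$.

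Combining the two previous steps, $\tilde\omega_{k,\rho}\ge\frac{\delta\delta_0}{2R}k-C_1-(M'+\delta C)\rho$. Choosing $k_1$ with $\frac{\delta\delta_0}{2R}k_1\ge 2C_1$ gives, for $k\ge k_1$, $\tilde\omega_{k,\rho}\ge\frac{\delta\delta_0}{4R}k-M''\rho$, i.e. $\omega_{k,\rho}\ge\ell_1 k-M\rho$ with $\ell_1=\pi\delta\delta_0/4$ and $M=\pi R M''$, which is the first assertion. For the consequence, if $\omega_{k,\rho}>0$ for every $k$ then $\mu_\rho:=\min_{1\le k<k_1}\omega_{k,\rho}>0$ (a minimum over finitely many indices), so $\omega_{k,\rho}\ge(\mu_\rho/k_1)k$ for $k<k_1$; and for $k\ge k_1$, taking $\rho$ small enough that $M\rho\le(\ell_1/2)k_1$ forces $\omega_{k,\rho}\ge(\ell_1/2)k$. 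Hence $\tilde\ell_0:=\min(\ell_1/2,\mu_\rho/k_1)$ works.

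The main obstacle is the uniform–in–$k$ linear lower bound $\psi_{k,\rho}'(R)\gtrsim k$: a direct energy or maximum–principle estimate on $\psi_{k,\rho}$ itself would either destroy the linear growth or carry constants blowing up with $k$, whereas the substitution $q_{k,\rho}=\psi_{k,\rho}/(r/R)^k$ removes the leading power $r^k$ and turns the problem into a radial equation in "dimension" $2k+2$ whose elliptic estimate actually \emph{improves} as $k\to\infty$. Everything else is routine bookkeeping on top of Lemma~\ref{Cl:Borne}.
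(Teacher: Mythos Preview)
Your argument is correct, and the global architecture matches the paper's: reduce to a linear lower bound $\psi_{k,\rho}'(R)\gtrsim k$, the other contributions being $O(\rho)$ or $O(1)$ uniformly in $k$. The difference lies in how this key bound is obtained. The paper compares $\psi_{k,\rho}$ with the explicit sub-solution $y_{k,\rho}(r)=(r/R)^{k/\sqrt 2}(-\varphi_\rho'(R))$, which satisfies $-\frac1r(ry_{k,\rho}')'=-\frac{k^2}{2r^2}y_{k,\rho}$; a maximum-principle argument then gives $\psi_{k,\rho}-y_{k,\rho}\le 0$, hence $\psi_{k,\rho}'(R)\ge y_{k,\rho}'(R)=\frac{k}{\sqrt 2 R}(-\varphi_\rho'(R))$. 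You instead factor out the exact leading power, writing $\psi_{k,\rho}=(r/R)^k q_{k,\rho}$, and exploit that $q_{k,\rho}$ solves a radial equation in effective dimension $2k+2$: integrating $(r^{2k+1}q_{k,\rho}')'=\rho r^{2k+1}f'(u)q_{k,\rho}$ gives $|q_{k,\rho}'(R)|\le C\rho/(k+1)$, so $\psi_{k,\rho}'(R)=\frac{k}{R}(-\varphi_\rho'(R))+O(\rho)$. Your route yields the sharper constant $1/R$ (versus $1/(\sqrt 2 R)$) and makes transparent \emph{why} the estimate improves with $k$; the paper's comparison argument is shorter and avoids any discussion of the behaviour of $q_{k,\rho}$ at the origin. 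You also treat the $k$-independent block $\mathcal C_\rho$ more carefully than the paper, which is a bit casual in absorbing it into the $-M\rho$ term.
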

\begin{proof}[Proof of Proposition \ref{Pr:H12}]
We know from Lemma \ref{Cl:Borne} that, for any $k\in \N$, there holds
\begin{eqnarray*}
\tilde{\omega}_{k,\rho}&\geq & \left(-\p_{\rho}'(R)-2\phi_{\rho}'(R)\right)\psi_{k,\rho}'(R)-M\rho.
\end{eqnarray*}

We also recall that there exists $\delta >0$ such that
$$
\min \{-\p_{\rho}'(R)-2\phi_{\rho}'(R) , -\p_{\rho}'(R)\}\geq \delta>0.
$$ 
for $\rho$ small enough. 

Let us state main ingredient of the proof.
\begin{lemma}\label{LH12}
There exist $\ell_0>$,  $\hat M>0$ and $\bar \rho>0$ such that for $\rho\in [0,\bar \rho]$, one has
\begin{equation}\label{Eq:1}
\text{ For any $k\geq k_1$, }\psi_{k,\rho}'(R)\geq \ell_0k-M\rho .
\end{equation}
\end{lemma}
According to Lemma~\ref{lem:2estimControlOm}, one has 
$\omega_{k,\rho}\geq\delta( \ell_0k-M\rho),$ yielding to the conclusion of Proposition~\ref{Pr:H12} for $\rho$ small enough.
\end{proof}

let us prove Lemma~\ref{LH12}.
\begin{proof}[Proof of Lemma~\ref{LH12}]

Observe that, for any $k\in \N$, the function $y_{k,\rho}:r\mapsto \left(\frac{r}{R}\right)^{\frac{k}{\sqrt{2}}}(-\p_\rho'(R))$ solves the ODE 
\begin{equation}
\begin{cases}
-\frac1r(ry_{k,\rho}')'=-\frac{k^2}{2r^2}y_{k,\rho}\quad \text{in }(0,1), 
\\y_{k,\rho}(R)=-\p_\rho'(R).
\end{cases}
\end{equation}
Let us consider the function $z_k:=\psi_{k,\rho}-y_{k,\rho}.$
Using the same idea as in the proof of Lemma \ref{lem:2estimControlOm}, we want to prove that 
$z_k'(R)\geq 0.$

To do so, we note that the function $z_k$ satisfies
\begin{align*}
-\frac1r(rz_k')'&= -\left(\frac{k^2}{r^2}+\rho f'(u_0)\right)\psi_{k,\rho}+\frac{k^2}{2r^2}y_k \leq -\frac{k^2}{2r^2}(\psi_{k,\rho}-y_{k,\rho}).
\end{align*} Indeed, $\psi_{k,\rho}\geq 0$ and $\frac{k^2}{r^2}+\rho f'(u_0)\geq \frac{k^2}{2r^2}$ for $\rho$ small enough, uniformly in $k$. As a consequence, we have $z_k\leq  0$. Since $z_k(R)=0$, we have $z_k'(R)\geq 0$. Since 
$$y_{k,\rho}'(R)=\frac{k}{\sqrt{2}R}(-\p_\rho'(R))$$  and since we have 
$-\p_\rho'(R)\geq \delta>0$ according to Lemma~\ref{Cl:Borne} for any $\rho>0$ small enough, one gets the desired conclusion.
\end{proof}
\color{black}

 \subsection{Shape (in)stability of $\B^*$}\label{sec:shapeinstab}
 \subsubsection{\color{black} Proof of the stability of the ball under Assumption \eqref{Eq:Hyp}}
 \color{black}
Stability under Assumption \eqref{Eq:Hyp} is well known (see \cite{DambrinePierre}) in the case where $\rho=0$. Hereafter, we recall the proof, showing by the same \color{black}method   \color{black} a stability result for $\rho>0$.
 \begin{lemma}\label{Le:Instab}
 Under assumption \eqref{Eq:Hyp}, one has $\omega_{1,0}>0$.
 \end{lemma}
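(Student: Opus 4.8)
The plan is to compute $\omega_{1,0}$ explicitly using formula \eqref{def:omegarhok} with $\rho=0$ and to show that, after simplification, the sign of $\omega_{1,0}$ is governed precisely by the quantity appearing in Assumption \eqref{Eq:Hyp}. When $\rho=0$, most of the auxiliary functions degenerate: the adjoint state $p_0$ solves $-\Delta p_0=0$ with zero boundary data, hence $p_0\equiv 0$ and $\phi_0'(R)=0$; similarly $\xi_{k,0}\equiv 0$ (as already noted after \eqref{def:xikrho}) and $\zeta_{k,0}\equiv 0$, so $\xi_{1,0}'(R)=\zeta_{1,0}'(R)=0$. Also $\Lambda_0=\left.\tfrac{\partial p_0}{\partial\nu}\tfrac{\partial u_0}{\partial\nu}-\tfrac12\left(\tfrac{\partial u_0}{\partial\nu}\right)^2\right|_{\mathbb S^*}=-\tfrac12(\varphi_0'(R))^2$. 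Substituting into \eqref{def:omegarhok} with $k=1$, all terms involving $\phi_0'(R)$, $\xi_{1,0}'(R)$, $\zeta_{1,0}'(R)$ drop out, leaving
$$
\frac{\omega_{1,0}}{\pi R}=-\frac{\Lambda_0}{R}+\frac{1}{2R}(\varphi_0'(R))^2+g(R)\varphi_0'(R)-\varphi_0'(R)\psi_{1,0}'(R)=\frac{1}{R}(\varphi_0'(R))^2+g(R)\varphi_0'(R)-\varphi_0'(R)\psi_{1,0}'(R).
$$

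The next step is to identify $\psi_{1,0}'(R)$. For $k=1$ and $\rho=0$, equation \eqref{def:psikrho} becomes $-\tfrac1r(r\psi_{1,0}')'=-\tfrac1{r^2}\psi_{1,0}$ on $(0,R)$ with $\psi_{1,0}(R)=-\varphi_0'(R)$; the solution bounded at the origin is $\psi_{1,0}(r)=-\varphi_0'(R)\cdot\tfrac{r}{R}$, hence $\psi_{1,0}'(R)=-\tfrac{\varphi_0'(R)}{R}$. Plugging this in, the first and last terms combine:
$$
\frac{\omega_{1,0}}{\pi R}=\frac1R(\varphi_0'(R))^2+g(R)\varphi_0'(R)+\frac1R(\varphi_0'(R))^2=\frac{2}{R}(\varphi_0'(R))^2+g(R)\varphi_0'(R).
$$
Wait — I should double-check this against the expected answer; the intended criterion is $2\pi R^2 g(R)\le\int_{\mathbb B^*}g$, i.e. $R g(R)\le \tfrac1{2\pi R}\int_{\mathbb B^*}g = -\varphi_0'(R)$ by Lemma \ref{Le:Quali} (integrating the PDE gives $-2\pi R\varphi_0'(R)=\int_{\mathbb B^*}g$). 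So I would write $\omega_{1,0}/(\pi R)=\varphi_0'(R)\bigl(\tfrac2R\varphi_0'(R)+g(R)\bigr)$ and, since $\varphi_0'(R)<0$, positivity of $\omega_{1,0}$ is equivalent to $\tfrac2R\varphi_0'(R)+g(R)<0$, i.e. $g(R)<-\tfrac2R\varphi_0'(R)=\tfrac{1}{\pi R^2}\int_{\mathbb B^*}g$, which is exactly the strict form of \eqref{Eq:Hyp} (together with $\int_{\mathbb B^*}g>0$, which guarantees $\varphi_0'(R)<0$ strictly and hence the inequality is not vacuous).

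So the proof I would write is: (1) set $\rho=0$ and observe $p_0\equiv0$, $\xi_{1,0}\equiv0$, $\zeta_{1,0}\equiv0$, $\Lambda_0=-\tfrac12(\varphi_0'(R))^2$; (2) solve the ODE for $\psi_{1,0}$ explicitly to get $\psi_{1,0}(r)=-\tfrac{r}{R}\varphi_0'(R)$ and $\psi_{1,0}'(R)=-\tfrac1R\varphi_0'(R)$; (3) substitute into \eqref{def:omegarhok} to obtain $\omega_{1,0}=\pi R\,\varphi_0'(R)\bigl(\tfrac2R\varphi_0'(R)+g(R)\bigr)$; (4) use Lemma \ref{Le:Quali} — specifically the integrated identity $-2\pi R\varphi_0'(R)=\int_{\mathbb B^*}g$ — to rewrite this as a function of $\int_{\mathbb B^*}g$ and $g(R)$, and conclude that \eqref{Eq:Hyp} forces $\omega_{1,0}>0$. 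There is no real obstacle here since $\rho=0$ collapses everything to explicit one-dimensional computations; the only point requiring minor care is bookkeeping the signs and confirming that the second-derivative term $\varphi_0''(R)\phi_0'(R)$ indeed vanishes (it does, since $\phi_0'(R)=0$), so that we never need the value of $\varphi_0''(R)$.
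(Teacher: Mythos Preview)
Your proof is correct and follows essentially the same route as the paper: set $\rho=0$ so that $p_0,\xi_{1,0},\zeta_{1,0}$ all vanish, solve $\psi_{1,0}(r)=-\tfrac{r}{R}\varphi_0'(R)$ explicitly, plug into \eqref{def:omegarhok}, and reduce to the sign of $-\tfrac2R\varphi_0'(R)-g(R)=\tfrac1{\pi R^2}\int_{\mathbb B^*}g-g(R)$. One small remark: the condition you derive, $\pi R^2 g(R)<\int_{\mathbb B^*}g$, is not literally ``the strict form of \eqref{Eq:Hyp}'' (which has $2\pi R^2$ on the left), but \eqref{Eq:Hyp} together with $\int_{\mathbb B^*}g>0$ certainly implies it, so the conclusion stands.
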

 This Lemma concludes the proof of the second part of Theorem \ref{Th:Shape}. Indeed, according to Propositions \ref{Pr:Sep} and \ref{Pr:Mono} we have, for $\rho>0$ small enough, and any $k\in \N^*$, 
 $$\omega_{k,\rho}>0.$$ From Lemma \ref{LH12}, there holds, for some constant $\tilde \ell_0>0$,
 $$\forall k \in \N^*\,, \omega_{k,\rho}\geq \tilde \ell_0k.$$
\begin{eqnarray*}
 \mathcal L_{\Lambda_\rho}''(\B^*)[V,V] &\geq&\sum_{k=1}^\infty \tilde\ell_0 k\left(\alpha_k^2+\beta_k^2\right)\\
 &=&\tilde\ell_0 \Vert V\cdot \nu\Vert_{H^\frac12}^2
\end{eqnarray*}
for $\rho$ small enough.

 \begin{proof}[Proof of Lemma \ref{Le:Instab}]
 To compute $\omega_{1,0}$, recall that, for $\rho=0$, the function $\psi_{1,0}$ solves
 $$
 -\frac1r(r\psi_{1,0}')'=-\frac1{r^2}\psi_{1,0}\quad\text{and} \quad \psi_{1,0}(R)=-\p_{0}'(R),
 $$ 
 and therefore, $\psi_{1,0}(r)=-\frac{r}R \p_{0}'(R)$ for all $r\in [0,R]$, so that
  \begin{align*}
\frac{ \omega_{1,0}}{\pi R} &=-\frac{\Lambda_0}R+\frac1{2R}(\p_{0}'(R))^2+g(R)\p_{0}'(R)-\p_{0}'(R)\psi_{1,0}'(R)
 \\&=\frac1R(\p_0'(R))^2+g(R)\p_0'(R)+\frac1R(\p_0'(R))^2
 \\&= \frac2R(\p_0'(R))^2+g(R)\p_0'(R)
  \\&=-\p_{0}'(R)\left(-\frac2R\p_0'(R)-g(R)\right)
 \end{align*}
 where the expression of $\Lambda_0$ is given by \eqref{def:Lambrho}.
 Since $-R\p_{0}'(R)=\int_0^R tg(t)dt=\frac1{2\pi}\int_{\B^*}g,$ and $\varphi_0'(R)<0$, we infer that the sign of $\omega_{1,0}$ is the sign of 
 $$
-\frac2R\p_0'(R)-g(R)=\frac1{\color{black}\pi R^2\color{black}}\int_{\mathbb B^*}g-g(R),
 $$
 \color{black}
 
 and the positivity of this last quantity is  exactly Assumption  \eqref{Eq:Hyp}.
%

%
%
 The conclusion  follows.
\end{proof}
 \subsubsection{An example of instability}
 In this part, we will assume that $g$ is the constant function equal to 1, i.e. $g= 1$.
Even if the ball $\B^*$ is known to be a minimizer in the case $\rho=0$, it is a degenerate one in the sense that
 $\omega_{1,0}=0$ coming from the invariance by translations of the problem. In what follows, we exploit this fact and will construct a suitable nonlinearity $f$ such that $\B^*$ is not a local minimizer for $\rho$ small enough, in other words such that $\omega_{1,\rho}<0$.
 
 We assume without loss of generality that $R=1$ for the sake of simplicity. 
 
 \begin{lemma}\label{Le:Der}
 There holds 
 $$\omega_{1,\rho}=\frac\rho4(w_1+w_1')(1)  +\pe$$  where $w_1$ solves
  \begin{equation}\label{eq:W1}
  \left\{\begin{array}{ll}
  -(rw_1')'=-\frac1rw_1-\frac{r^2}2 f'(\p_{0})-\frac{r^2}2& \text{in }(0,1)\\
  w_1(1)=-\int_0^1 tf(\p_{0})\, dt.&
  \end{array}\right.\end{equation}
 \end{lemma}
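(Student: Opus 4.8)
The goal is to obtain an asymptotic expansion of $\omega_{1,\rho}$ as $\rho\to 0$, starting from the explicit formula \eqref{def:omegarhok} for $k=1$ and $R=1$, which reads
$$
\frac{\omega_{1,\rho}}{\pi}=-2\psi_{1,\rho}'(1)\phi_{\rho}'(1)-\p_{\rho}'(1)\zeta_{1,\rho}'(1)-\p_{\rho}''(1)\phi_{\rho}'(1)-\xi_{1,\rho}'(1)\p_{\rho}'(1)-\Lambda_\rho+\tfrac12(\p_{\rho}')^2(1)+\p_{\rho}'(1)-\p_{\rho}'(1)\psi_{1,\rho}'(1).
$$
First I would recall that, when $\rho=0$, $g\equiv 1$, the solution is $u_0(x)=(1-|x|^2)/4$, so $\p_0(r)=(1-r^2)/4$, $\p_0'(r)=-r/2$, $\p_0'(1)=-1/2$, $\p_0''(1)=-1/2$; also $p_0\equiv 0$ hence $\phi_0\equiv 0$, $\Lambda_0=\tfrac12(\p_0'(1))^2=\tfrac18$, $\psi_{1,0}(r)=-r\p_0'(1)=r/2$ so $\psi_{1,0}'(1)=1/2$, and $\xi_{1,0}\equiv 0$, $\zeta_{1,0}\equiv 0$. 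Plugging these in gives $\omega_{1,0}=0$, consistent with the translation invariance noted above. So the leading behaviour is of order $\rho$, and the plan is to differentiate each ingredient at $\rho=0$ and collect the $O(\rho)$ term.

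The key step is to identify, for each of the radial functions $\p_\rho$, $\phi_\rho$, $\psi_{1,\rho}$, $\xi_{1,\rho}$, $\zeta_{1,\rho}$, the first-order term in its $\rho$-expansion. Writing $\p_\rho=\p_0+\rho \dot\p+\pe$ etc., one differentiates the defining ODEs \eqref{def:psikrho}, \eqref{def:xikrho}, \eqref{def:zetakrho}, \eqref{Eq:Adj} at $\rho=0$. From \eqref{Eq:Adj} one gets $\phi_\rho=\rho\phi+\pe$ with $-\tfrac1r(r\phi')'=-f(\p_0)$, $\phi(1)=0$; similarly $\zeta_{1,\rho}$ and (the relevant part of) the expansion all reduce, after substituting the known $\rho=0$ data, to linear ODEs with right-hand sides built from $f(\p_0)$ and $f'(\p_0)$. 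The claim is that all the first-order contributions can be assembled into the single auxiliary function $w_1$ solving \eqref{eq:W1}, and that the resulting expression for the $O(\rho)$ coefficient of $\omega_{1,\rho}/\pi$ is exactly $\tfrac14(w_1+w_1')(1)$; equivalently, after factoring out $\pi$, $\omega_{1,\rho}=\tfrac{\rho}{4}(w_1+w_1')(1)+\pe$. Concretely I would: (i) note that only terms already $O(\rho)$ in \eqref{def:omegarhok} keep their $\rho=0$ factors (e.g. $\p_\rho'(1)\zeta_{1,\rho}'(1)$, $\xi_{1,\rho}'(1)\p_\rho'(1)$, $\phi_\rho'(1)$ times anything), so one can replace the other factors by their $\rho=0$ values; (ii) note the terms $-\Lambda_\rho+\tfrac12(\p_\rho')^2(1)$ combine, since $\Lambda_\rho=\phi_\rho'(1)\p_\rho'(1)-\tfrac12(\p_\rho'(1))^2$, into $-\phi_\rho'(1)\p_\rho'(1)$, which is $O(\rho)$; (iii) expand the remaining $\rho$-dependence of $\p_\rho'(1)$ and $\psi_{1,\rho}'(1)$ in the non-$O(\rho)$ terms $\p_\rho'(1)+\tfrac12(\p_\rho'(1))^2 - \p_\rho'(1)\psi_{1,\rho}'(1)$, checking that the $O(1)$ part cancels (as it must, since $\omega_{1,0}=0$) and reading off the $O(\rho)$ part in terms of $\dot\p'(1)$ and $\dot\psi_{1}'(1)$; (iv) verify by a direct comparison of ODEs that $w_1$ as defined in \eqref{eq:W1} packages $-2\dot\psi_1 - \zeta$-part $-\xi$-part $+$ the combination from (iii), so that its boundary data $(w_1+w_1')(1)$ reproduces the collected coefficient up to the factor $\tfrac14$.

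The main obstacle I anticipate is bookkeeping: there are many auxiliary functions, each contributing a first-order term, and one must be careful about which contributions survive (those multiplying $O(\rho)$ quantities contribute only through their $\rho=0$ value, while those multiplying $O(1)$ quantities contribute through their first derivative in $\rho$), and about the sign conventions in \eqref{def:psikrho}--\eqref{def:zetakrho}. Justifying that the remainders are genuinely $\pe$ (rather than merely $o(\rho)$) requires invoking the elliptic regularity estimates of Lemma~\ref{Cl:Borne} together with $C^2$ regularity of $f$, which gives $C^1$ control of the difference quotients in $\rho$; this makes the expansions legitimate up to second order. Once \eqref{eq:W1} is reached, the remaining work — showing $w_1'(1)+w_1(1)<0$ under the hypothesis $f'<-1$ on $[0,2\|u_0\|_\infty)$, hence $\omega_{1,\rho}<0$ for small $\rho$, and producing the destabilising vector field $\hat V$ with $\hat V\cdot\nu=\cos(\cdot)$ — would be carried out in the subsequent steps of the section, not here.
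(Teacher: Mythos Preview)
Your proposal is correct and follows essentially the same route as the paper: expand each radial auxiliary function to first order in $\rho$, plug into \eqref{def:omegarhok}, and collect the $O(\rho)$ contribution. The paper makes the final ``packaging'' step concrete by introducing $y_1$ (the first-order term of $\psi_{1,\rho}$, solving \eqref{Eq:Y}) and $z_1$ (the first-order term of $\xi_{1,\rho}$, solving \eqref{Eq:Z}), then setting $w_1:=y_1+z_1$; adding the two ODEs and using $\p_0'(1)=-\tfrac12$ gives exactly \eqref{eq:W1}. One point worth sharpening in your outline: the $\zeta_{1,\rho}$ term is not merely $O(\rho)$ but $O(\rho^2)$ by Lemma~\ref{Cl:Borne} (because its source term already carries a factor $\rho\phi_\rho=O(\rho^2)$), so it disappears entirely from the first-order balance rather than contributing a piece of $w_1$.
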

  
 \begin{proof}[Proof of Lemma \ref{Le:Der}]
 The techniques to derive estimates follow exactly the same lines as in Lemma \ref{Cl:Borne}. 
First, we claim that 
\begin{equation}\label{eq:strasb1916}
\p_{\rho}=\p_{0}+\rho \p_1+\pe\quad \text{in }\mathscr{C}^1,
\end{equation}
where $\p_1$ satisfies
 \begin{equation}\label{def:varphi1}
\left\{\begin{array}{ll}
-\frac1r(r\p_{1}')'=-f(\p_0) & \text{ in }(0,1)\\ 
\p_1(1)=0.&
\end{array}\right.
\end{equation}
Indeed, considering the function $\delta=\p_\rho-\p_0-\rho\p_1$, one shows easily that it satisfies
$$
\left\{\begin{array}{ll}
-\frac1r(r\delta ')'=\rho (f(\p_0)-f(\p_\rho)) & \text{in }(0,1)\\
 \delta (1)=0. & 
\end{array}\right.
$$
Therefore, by mimicking the reasonings done in the proof of Lemma \ref{Cl:Borne}, involving the elliptic regularity theory, and the fact that $\Vert \p_\rho-\p_0\Vert_{W^{1,\infty}}=\operatorname{O}(\rho)$, we infer that $\Vert \delta \Vert_{\mathscr{C}^1}=O(\rho^2)$, whence the result.

\medskip

Using that $\p_\rho$ satisfies $-\frac1r(r\p_{\rho}')'+\rho f(\p_\rho)=g$ and integrating this equation yields
\begin{equation}\label{eq:varphirhoprim1}
-\p_{\rho}'(1)=\frac12-\rho \int_0^1 t f(\p_{\rho})\, dt=\frac12-\rho \int_0^1 tf(\p_{0}(t))\, dt+\pe.
\end{equation}

The Equation on $\phi_{\rho}$ reads
$$
\left\{\begin{array}{ll}
-(r\phi_{\rho}')'=r\Big(-\rho \phi_{\rho} f'(\p_{\rho})-\rho f(\p_{\rho})\Big) & \text{in }(0,1)\\
\phi_{\rho}(0)=0. & 
\end{array}\right.
$$
and according to Lemma \ref{Cl:Borne}, there holds $\Vert \phi_{\rho}\color{black}\Vert_\infty\color{black}=\operatorname{O} (\rho)$. We thus infer that
\begin{equation}\label{eq:phirhoprim1}
-\phi_{\rho}'(1)=-\rho \int_0^1 tf(\p_{0})\, dt+\pe.
\end{equation}
From \eqref{eq:varphirhoprim1} and \eqref{eq:phirhoprim1},we infer that 
\begin{eqnarray}
\Lambda_\rho&=&\frac12(\left(\p_{\rho}'(1)\right)^2-\phi_{\rho}'(1)\p_{\rho}'(1)=\frac12 \p_{0}'(1)^2-\rho \p_{0}'(1)\int_0^1 tf(\p_{0})\, dt+\rho \p_{0}'(1)\int_0^1 tf(\p_{0})\, dt+\pe \nonumber \\
&=&\frac12 \p_{0}'(1)^2+\pe.\label{eq:pLambdarho}
\end{eqnarray}

\medskip

Regarding $\psi_{1,\rho}$ and using that it satisfies \eqref{def:psikrho}, we get
$$ 
\psi_{1,\rho}(1)=-\p_{\rho}'(1)=\frac12-\rho \int_0^1 tf(\p_{0})\, dt.
$$
We then infer that $\Vert \psi_{1,\rho}+r\p_{0,\rho}'(1)\Vert_{\mathscr{C}^1} =\operatorname{O}(\rho)$.
Plugging this estimate in \eqref{def:psikrho} allows us to show that
\begin{equation}\label{strasb:1824}
\psi_{1,\rho}(r)=-\p_{0}'(1)r+\rho y_1(r)+\pe\quad  \text{in }\mathscr C^1(0,1),
\end{equation}
where $y_1$ solves
\begin{equation}\label{Eq:Y}
\left\{\begin{array}{ll}
-\left(ry_1'\right)'=-\frac1ry_1+r^2\p_{0}'(1) f'(\p_{0})& \text{in }(0,1)
\\ y_1(1)= -\int_0^1 tf(\p_{0})\, dt.&
\end{array}\right.
\end{equation}

\medskip

Regarding $\xi_{1,\rho}$ and using that it satisfies \eqref{def:xikrho}, we easily get that
$\Vert \xi_{1,\rho}\Vert_{W^{1,\infty}}=\operatorname{O}(\rho)$, according to Lemma \ref{Cl:Borne}. 
This allows us to write
\begin{equation}\label{strasb:1911}
\xi_{1,\rho}=\rho z_1+\pe \quad \text{in } \mathscr C^1(0,1)
\end{equation}
where $z_1$ satisfies
\begin{equation}\label{Eq:Z}
\left\{\begin{array}{ll}
-(rz_1')'=-\frac1rz_1+r^2\p_{0}'(1)& \text{in }(0,1)\\
z_1(1)=0.&
\end{array}
\right.
\end{equation}

\medskip

Let us now expand $\omega_{1,\rho}$ with respect to the parameter $\rho$. Recall that
 \begin{multline*}
\omega_{1,\rho}= \frac12\left(-2\psi_{1,\rho}'(1)\phi_{1,\rho}'(1)-\p_{\rho}''(1)\phi_{1,\rho}'(1)-\p_{\rho}'(R)\zeta_{1,\rho}'(R)\right.
 \\\left.
 -\xi_{1,\rho}'(1)\p_{\rho}'(1)+\Lambda_\rho+\frac1{2} (\p_{\rho}')^2+\p_{\rho}'(1)-\p_{\rho}'(1)\psi_{1,\rho}'(1)\right).\end{multline*}

Regarding the term $\p_{0,\rho}'(R)\zeta_{1,\rho}'(R)$, we know from Lemma \ref{Cl:Borne} that $\Vert \zeta_{1,\rho}'(R)\color{black}\Vert_\infty\color{black}=\pe$.

Using this estimate and plugging the expansions \eqref{eq:strasb1916}-\eqref{eq:pLambdarho}-\eqref{strasb:1824}-\eqref{strasb:1911} in the expression above yields successively
  \begin{align*}
  -2\psi_{1,\rho}'(1)\phi_{\rho}'(1)&=2\rho\p_{0}'(1)\int_0^1 tf(\p_{0})\, dt+\pe=-\rho \int_0^1 tf(\p_{0})\, dt+\pe.\\
  -\p_{\rho}''(1)\phi_{\rho}'(1)&=-\p_{0}''(1)\phi_{\rho}'(1)+\pe=\frac\rho2\int_0^1 tf(\p_{0})\, dt+\pe.
  \\-\xi_{1,\rho}'(1)\p_{\rho}'(1)&=-\p_{0}'(1)\xi_{1,\rho}'(1)+\pe=\frac\rho2z_1'(1)
  \\ \Lambda_\rho+\frac12(\p_{\rho}')^2&=\p_{0}'(1)^2-\frac\rho2\int_0^1 tf(\p_{0})\, dt+\pe=\frac14-\frac\rho2\int_0^1 tf(\p_{0})\, dt+\pe
  \\\p_{\rho}'(1)&=-\frac12+\rho \int_0^1 tf(\p_{0})\, dt+\pe
  \\-\p_{\rho}'(1)\psi_{1,\rho}'(1)&=\p_{0}'(1)^2-\rho \p_{0}'(1)y_1'(1)+\rho\p_{0}'(1)\int_0^1 tf(\p_{0})\, dt+\pe
  \\&=\frac14+\frac\rho2y_1'(1)-\frac\rho2\int_0^1 tf(\p_{0})\, dt,
  \end{align*}
   by using that $\Vert \phi_{\rho}\Vert_{W^{1,\infty}}=\operatorname{O}(\rho)$ and $\Vert \xi_{1,\rho}\Vert_{W^{1,\infty}}=\operatorname{O}(\rho)$.
This gives
  \begin{align*}
  \omega_{1,\rho}&=-\rho \int_0^1 tf(\p_{0})\, dt+\frac\rho2\int_0^1 tf(\p_{0})\, dt+\frac\rho2z_1'(1)
  \\&+\frac14-\frac\rho2\int_0^1 tf(\p_{0})\, dt-\frac12+\rho \int_0^1 tf(\p_{0})\, dt+\frac14+\frac\rho2y_1'(1)-\frac\rho2\int_0^1 tf(\p_{0})\, dt+\pe.
  \end{align*}
   
  As expected, the zero order terms cancel each other out and we get
  \begin{align*}
   \omega_{1,\rho}&=-\frac{\rho}2 \int_0^1 tf(\p_{0})\, dt+\frac\rho2z_1'(1)+\frac\rho2y_1'(1)+\pe, 
  \end{align*}
  which concludes the proof by setting $w_1=y_1+z_1$.
  
  \end{proof}

  \paragraph{Construction of the non-linearity.}
  Recall that we are looking for a non-linearity $f$ such that $\omega_{1,\rho}<0$, in other words such that $(w_1+w_1')(1)<0$ according to Lemma \ref{Le:Der}.
  To this aim, let us consider the function $w_1$ solving \eqref{eq:W1}.
Let us consider a non-negative function $f$ such that
  \begin{equation}\label{Eq:HyF}
  f'(\cdot)<-1\quad \text{ on }[0,\Vert\p_{0}\color{black}\Vert_\infty\color{black}].
  \end{equation}
  It follows that 
  $$w_1(1)=-\int_0^1 tf(\p_{0})\, dt<0.$$ 
  Besides,
  \begin{align*}
  -(rw_1')'&=-\frac1rw_1-\frac{r^2}2\left(f'(\p_{0})+1\right)\geq -\frac1rw_1
  \end{align*}
by using\eqref{Eq:HyF}. Thus $w_1$ cannot reach a local negative minimum in $(0,1)$. Moreover, by using that $w_1$ is regular ($w_1$ is the sum of two functions at least $\mathscr C^1$ according to the proof of Lemma \ref{Le:Der}) and integrating the equation above yields
$$
-rw_1'(r)+\frac{1}{2}\int_0^rs^2\left(f((\varphi_0(s))+1\right)\, ds=-\int_0^r\frac{w_1(s)}{s}\, ds
$$
 for $r>0$. The left-hand side is well-defined and it follows that so is the right-hand side, which implies that necessarily $w_1(0)=0$ (else, we would immediately reach a contradiction).
 
 Since $w_1$ cannot reach a local minimum on $(0,1)$ and since $0=w_1(0)>w_1(1)$, we get that $w_1$ is decreasing on $(1-\delta,1)$ for some $\delta>0$, ensuring that $w_1'(1)<0$. The conclusion follows.


\appendix

\begin{center}
\fbox{\large{\textsf{Appendix}}}
\end{center}
\section{Proof of Lemma \ref{claim:borne}}\label{Annexe:preuveClaim}

Recall that we want to establish a uniform (with respect to $a$ and $M$) $L^\infty$ bound on the solutions of 
\begin{equation}
\left\{\begin{array}{ll}
-\Delta u_{M,\rho,a}+M(1-a)u_{M,\rho,a}+\rho f(u_{M,\rho,a})=g,&\text{ in }D,
\\u_{M,\rho,a}\in W^{1,2}_0(\O).&\end{array}\right.\end{equation} Here, it is assumed that $g$ is non-negative.\\
Define $\phi_g$ as the solution of 
\begin{equation*}
\left\{\begin{array}{ll}
-\Delta\phi_g+\rho f(\phi_g)=g,&\text{ in }D,
\\\phi_g\in W^{1,2}_0(\O).&\end{array}\right.\end{equation*}
\color{black}Standard $W^{2,2}$ estimates in dimension 2 and 3 \color{black} show that $\phi_g$ is continuous and that 
$$
\Vert \phi_g\color{black}\Vert_\infty\color{black}<+\infty.
$$
Define $z:=\phi_g-u_{M,\rho,a}\in W^{1,2}_0(\O)$. 
We can write
$$-\Delta z +\rho \frac{f(\phi_g) - f(u_{M,\rho,a})}{\phi_g-u_{M,\rho,a}}\, z = M(1-a) u_{M,\rho,a} \geq 0$$
The generalized maximum principle, and the fact that $f$ is Lipschitz  entails that $z$ reaches its minimum on the boundary $\partial D$, so that $z$ is non-negative. Thus
$$
0\leq u_{M,\rho,a}\leq \phi_g\leq \Vert \phi_g\color{black}\Vert_\infty\color{black}<+\infty
$$
and we conclude by noting that the quantity in the right-hand side is uniformly bounded with respect to $ \rho \in [0,\underline \rho)$.
\section{Proof of Proposition \ref{Semicontinuite}}\label{Annexe:preuveSemicontinuite}
We recall that we want to establish that if $(\Omega_k)_{k\in \N}\in \mathcal O_m^\N$ $\gamma$-converges to $\O$, then 
$$J_\rho(\Omega)\leq \underset{k\to\infty}{\lim\inf}J_\rho(\Omega_k).$$
Fix such a sequence $(\Omega_k)_{k\in \N}$ that $\gamma$-converges to $\Omega$. For the sake of clarity, we drop the subscript $\rho,f$ and $g$ and define, for every $k\in \N$, $u_k\in W^{1,2}_0(D)$ the unique solution to 
\begin{equation*}
\left\{\begin{array}{ll}
-\Delta u_k +\rho f(u_k)=g\text{ in }\Omega_k,&\\
u_k \in W^{1,2}_{0}(\Omega_k),&
\\ u_k \text{ is extended by continuity  as a function in }W^{1,2}_0(D).&\end{array}\right.\end{equation*}
First note that, for any $k\in \N$, multiplying the equation by $u_k$ and integrating by parts immediately yields
\begin{align*}
\lambda_1(D)\int_D u_k^2 &=\lambda_1(D)\int_{\Omega_k}u_k^2\leq \lambda_1(\Omega_k)\int_{\Omega_k}u_k^2\leq \int_{\Omega_k}|\nabla u_k|^2
\\&\leq \Vert g\Vert_{L^2(\Omega_k)}||u||_{L^2(\Omega_k)}+\rho \Vert f\Vert_{L^\infty(\R)}|\Omega_k|^{\frac12}\Vert u_k\Vert_{L^2(\Omega_k)}.
\end{align*}
The sequence $(u_k)_{k\in \N}$ is thus uniformly bounded in $W^{1,2}_0(D)$. By the Rellich-Kondrachov Theorem, $(u_k)_{k\in \N}$ converges (up to a subsequence, strongly in $L^2(D)$ and weakly in $W^{1,2}_0(D)$) to a function $u\in W^{1,2}_0(D)$.
\\The dominated convergence theorem then yields that the sequence $(f(u_k))_{k\in \N}$ converges strongly in $L^2(D)$, to $f(u)$. Thus, the sequence $(g-f(u_k))_{k\in \N}$ converges strongly in $W^{-1,2}_0(D)$  to $g-f(u)$. Since by assumption $(\Omega_k)_{k\in \N}$  $\gamma$-converges to $\Omega$ and since the right hand term converges strongly to $g-\rho f(u)$ in $W^{-1,2}_0(D)$, it follows that $(u_k)_{k\in \N}$ converges strongly in $W^{1,2}_0(D)$ to $u$ and that $u$ solves
\begin{equation*}
\left\{\begin{array}{ll}
-\Delta u +\rho f(u)=g\text{ in }\Omega,&\\
u \in W^{1,2}_{0}(\Omega),&
\end{array}\right.\end{equation*}
\color{black} which is unique.\color{black}

This strong convergence immediately implies that 
$$J(\Omega)=\underset{k\to \infty}{\lim}J(\Omega_k),$$
thus concluding the proof of Proposition \ref{Semicontinuite}.

\section{Proof of Lemma \ref{lem:monot2}}\label{append:prooflem:monot2}
\begin{proof}[Proof of Lemma \ref{lem:monot2}]
Let us first prove that 
 $(u_{M,\rho,a})_{M\geq 0}$ is uniformly bounded in ${W^{1,2}_0(D)}$ with respect to $M$ and $\rho$. To this aim, let us multiply \eqref{eq:ua} by $u_{M,\rho,a}$ and integrate by parts. One gets
\begin{eqnarray*}
\int_D |\n u_{M,\rho,a}|^2 &\leq & \int_D |\n u_{M,\rho,a}|^2+M(1-a)u_{M,\rho,a}^2\\
&\leq & \Vert g\Vert_{\color{black}W^{-1,2}\color{black}(D)}\Vert u_{M,\rho,a}\Vert_{L^2(D)}+\rho\left(f(0)+ \Vert f\Vert_{W^{1,\infty}}\right)\Vert u_{M,\rho,a}\Vert_{L^2(D)}.
\end{eqnarray*} 
By using the Poincar\'e inequality, we infer an uniform estimate of $u_{M,\rho,a}$ in $W^{1,2}_0(D)$. According to the Rellich-Kondrachov Theorem, there exists $u^*\in W^{1,2}_0(D)$ such that, up to a subfamily, $(u_{M,\rho,a})_{M\geq 0}$ converges to $u^*$ weakly in $H^1(D)$ and strongly in $L^2(D)$. As a consequence, up to a subsequence, $(f(u_{M,\rho,a}))_{M\geq 0}$ converges to $f(u^*)$ in $L^2(D)$ by using that $f$ is Lipschitz and $(\langle g,u_{M,a_n}\rangle_{\color{black}W^{-1,2}\color{black},H^1_0})_{M\geq 0}$ converges to $\langle g,u^*\rangle_{\color{black}W^{-1,2}\color{black},H^1_0}$.
By rewriting \eqref{eq:ua} under variational form with $u=u_{M,\rho,a}$, and passing to the limit as $M\to +\infty$ after having adequately extracted subsequences, we infer that $u^*$ is the unique solution of \eqref{eq:ua}. Using $u_{M,\rho,a}$ as a test function in \eqref{eq:u} and plugging the expression yielded in the definition of $\hat J_{M,\rho}(a)$, we first obtain
$$
\hat J_{M,\rho}(a)=-\frac{\rho}{2}\int_Du_{M,\rho,a}f(u_{M,\rho,a})-\langle g,u_{M,\rho,a}\rangle_{\color{black}W^{-1,2}\color{black}(D),W^{1,2}_0(D)}
$$
so that, from the previous convergence result, we have
$$
\hat J_{M,\rho}(a)\to -\frac{\rho}{2}\int_Du^*f(u^*)-\langle g,u^*\rangle_{\color{black}W^{-1,2}\color{black}(D),W^{1,2}_0(D)}\quad \text{as }M\to +\infty.
$$
Finally, if $a=\mathbbm{1}_\O$ \color{black} and if $\O$ is a stable quasi-open set\color{black}, by multiplying \eqref{eq:ua} by $u_{M,\rho,a}$ and integrating by parts, one gets 
$$
\int_{D}|\nabla u_{M,\rho,a}|^2+M\int_{D\backslash \Omega}u_{M,\rho,a}^2=\int_D (g-\rho f(u_{M,\rho,a}))u_{M,\rho,a},
$$
and since the right-hand side is uniformly bounded with respect to $M$, we infer that $\sqrt{M}u_{M,\rho,a}$ is bounded in $L^2(D\backslash \O)$ so that \color{black} $u^*=0$ almost everywhere in $D\backslash \O$. Since $\O$ is stable, this is, by definition, equivalent to $u^*\in W^{1,2}_0(\O)$.

The conclusion follows by observing that this convergence result is indeed valid without need to extract subfamily, since the closure points of $\{u_{M,\rho,a}\}_{M>0}$ reduces to a unique element.
\color{black}\end{proof}
\color{black}
\section{Proof of Lemma \ref{Le:Ajout}}\label{Append:Ajout}
We recall that $\O$ is a stable quasi-open set if the sets 
$$H^1_0(\O):=\left\{w \in H^1(D)\,, w=0\text{ q.e in }D\backslash \O\right\}$$ and 
$$\hat H^1_0(\O):=\left\{w\in H^1(D)\,, w=0 \text{ a.e in }D\backslash \O\right\}$$ coincide.

The assumption of Lemma \ref{Le:Ajout} is  that
for $0<\rho \leq \overline \rho,$ the functional $J_\rho$ is monotonous on the set of stable quasi-open sets $\mathcal O_{m,s}(D)$:
$$\forall (\Omega_1, \Omega_2)\in (\mathcal O_{m,s}(D))^2, \quad \O_1\subset \O_2\Rightarrow J_\rho(\O_1)\geq J_\rho(\O_2).$$

Let us now prove that the functional $J_\rho$ is monotonous on $\mathcal O(D)$, namely
$$\forall (\Omega_1, \Omega_2)\in (\mathcal O(D))^2,\quad  \O_1\subset \O_2\Rightarrow J_\rho(\O_1)\geq J_\rho(\O_2).$$

We first prove that $J_\rho$ is monotonous on the set  of open sets
$$\mathcal O_{m,o}(D):=\left\{\O\in \mathcal O(D)\,, \O\text{ is open}\right\}.$$

\begin{proof}[Proof of the monotonicity on $\mathcal O_{m,o}(D)$]
We use results from \cite[Lemmas 2.3 and 2.6]{Russ2019}.

We consider two admissible open sets $\O_1\subset \Omega_2$, where $\Omega_1,\O_2\in \mathcal O_{m,o}(D)$. Let us consider, for $i=1,2$, an increasing  sequence $(\O_{i,k})_{k\in \N}$ of smooth open sets included in $\O_i$ which Hausdorff converges to $\O_i$.

We can assume that, for every $k\in \N$, $\Omega_{2,k}=\Omega_{2,k}\cup \Omega_{1,k}$, and that this sequence still Hausdorff-converges to $\O_2$. As in \cite[Proof of Point (2), Lemma 3.6]{Russ2019}, the sequence $(\Omega_{i,k})_{k\in \N}$ (strong) $\gamma$-converges to $\Omega_i$, $i=1,2$. Since the functional is continuous for the strong $\gamma-$convergence, we can pass to the limit in the inequalities
$$J_\rho(\O_{1,k})\geq J_\rho(\Omega_{2,k})$$
and obtain the required conclusion.

\end{proof}

We can now prove Lemma \ref{Le:Ajout}.

\begin{proof}[Proof of Lemma \ref{Le:Ajout}]
From a classical result recalled in \cite[Lemma 2.6]{Russ2019}, for any $\O_1\subset \Omega_2$ such that $\Omega_1,\O_2\in \mathcal O_m(D)$ and $i=1,2$, there exists a  sequence  $(\O_{i,k})_{k\in \N}$ of  open sets included in $\O_i$ that $\gamma$-converges to $\O_i$.

Up to replacing $\Omega_{2,k}$ with $\Omega_{1,k}\cup \Omega_{2,k}$, giving a new sequence that still $\gamma$-converges to $\O_2$ and is still open, we can assume that 
$$\forall k \in \N\,, \Omega_{1,k}\subset \Omega_{2,k}.$$
Since $J_\rho$ is monotonous on $\mathcal O_{m,o}$, we have, for every $k$, 
$$J_\rho(\Omega_{1,k})\geq J_\rho(\Omega_{2,k})$$ and the strong continuity for the $\gamma$-convergence of sets allows us tu pass to the limit in these inequalities, yielding the desired result.
\end{proof}
\color{black}
\section*{Acknowledgments}
We would like to warmly thank the anonymous referees for their comments, which allowed us to improve and clarify our manuscript.

\bibliographystyle{abbrv}
\bibliography{biblio}
\paragraph{Acknowledgment.}
Y. Privat and I. Mazari were partially supported by the Project ''Analysis and simulation of optimal shapes - application to lifesciences'' of the Paris City Hall. A. Henrot, I. Mazari and Y. Privat were partially supported by the ANR Project ANR-18-CE40-0013 - SHAPO on Shape Optimization.

\end{document}